\documentclass[11pt,a4paper]{article}
\usepackage[T1]{fontenc}
\usepackage[utf8]{inputenc}
\usepackage{lmodern}
\usepackage{amsmath,amssymb,amsthm,mathtools}
\usepackage[margin=28mm]{geometry}
\usepackage{microtype}
\usepackage[colorlinks=true,linkcolor=blue,citecolor=blue,urlcolor=blue]{hyperref}
\hypersetup{pdftitle={Blow-up bounds under profile approximation for superconformal wave equations},
pdfauthor={Firas Kaabi}}
\newtheorem{theorem}{Theorem}[section]
\newtheorem{proposition}[theorem]{Proposition}
\newtheorem{lemma}[theorem]{Lemma}
\newtheorem{corollary}[theorem]{Corollary}
\theoremstyle{remark}
\newtheorem{remark}[theorem]{Remark}
\numberwithin{equation}{section}
\newcommand{\R}{\mathbb R}
\newcommand{\X}{\mathcal X}
\newcommand{\D}{\mathcal D}
\newcommand{\E}{\mathfrak E}
\newcommand{\LL}{\mathcal L_\alpha}
\DeclareMathOperator{\diver}{div}

\title{Blow-up bounds under profile approximation\\
for superconformal wave equations}
\author{Firas Kaabi\\[3pt]
\small Faculty of Sciences of Tunis, LR21ES08\\
\small University of Tunis El Manar, Tunisia}
\date{}

\begin{document}
\maketitle

\begin{abstract}
For the focusing semilinear wave equation in the superconformal,
Sobolev-subcritical range, we prove an upper blow-up bound at the
ODE rate near a non-characteristic point, assuming approximation
by signed multiples of explicit stationary profiles in similarity variables.
The comparison parameters may vary with time, and the error is
measured only in a degenerate energy norm on a small extension
of the unit ball. The main step is a quantitative exclusion of
concentration at the boundary of the profile family. It follows
from the nonnegativity of the superconformal Hamza--Zaag energy
at admissible vertices and an upper perturbation estimate.
We also exclude families with an isolated dominant profile and,
under a noncancellation condition, clusters of comparable profiles.
For the latter, we derive a limiting energy in $\R^{N+1}$ which
accounts for the interactions between profiles.
\end{abstract}

\noindent\textbf{Keywords.}
Semilinear wave equation; blow-up rate; non-characteristic point;
self-similar profiles; Lyapunov functional.

\section{Introduction}

We consider the focusing semilinear wave equation
\begin{equation}\label{eq:nlw}
u_{tt}-\Delta u=|u|^{p-1}u,
\qquad N\geq3,\qquad
p_c:=1+\frac4{N-1}<p<p_S:=1+\frac4{N-2}.
\end{equation}
The initial data belong to
$H^1_{\mathrm{loc}}(\R^N)\times L^2_{\mathrm{loc}}(\R^N)$.
We work on the maximal influence domain
$\{(x,t):0\leq t<T(x)\}$, where $T$ is $1$-Lipschitz
\cite{Alinhac95}. Fix $x_0$ with $T_0=T(x_0)<\infty$.
The point $x_0$ is non-characteristic if, for some
$\delta_0\in(0,1)$ and $0\leq t_0<T_0$, the solution is defined on
\begin{equation}\label{eq:nc}
\{(x,t):t_0\leq t<T_0-\delta_0|x-x_0|\}.
\end{equation}
The associated lower bound on the blow-up graph is
\begin{equation}\label{eq:graphlower}
T(x_0+h)\geq T_0-\delta_0|h|
\qquad\text{for sufficiently small }h.
\end{equation}

The aim is an upper bound at the rate predicted by the ordinary
differential equation $U''=|U|^{p-1}U$. We obtain this bound under
an approximation hypothesis involving an explicit family of
self-similar solutions. The main step is to show that a profile
which approximates the solution sufficiently well cannot have
its parameter approach the boundary of that family.

Set
\begin{equation}\label{eq:parameters}
k=\frac2{p-1},\qquad a=\frac{N-1}{2},\qquad
\mu=a-k\in(0,\tfrac12),\qquad \alpha=-\mu,
\end{equation}
and introduce the similarity variables
\begin{equation}\label{eq:similarity}
y=\frac{x-x_0}{T_0-t},\qquad s=-\log(T_0-t),\qquad
w(y,s)=(T_0-t)^k u(x,t).
\end{equation}
We write $B_R=\{y\in\R^N:|y|<R\}$ and $B=B_1$.
By \eqref{eq:nc}, $w$ is defined on each fixed ball
$B_A$, $1<A<\delta_0^{-1}$, for all sufficiently large $s$.
The full similarity norm is
\begin{equation}\label{eq:M}
M(s)^2=\|w(s)\|_{H^1(B)}^2+\|w_s(s)\|_{L^2(B)}^2.
\end{equation}
Whether $M(s)$ stays bounded for every non-characteristic blow-up
point in \eqref{eq:nlw} is the general rate problem considered
here; it is listed as open in \cite[Section~4.1]{Z26}.

\subsection{Main results}

For $|d|<1$, define
\begin{equation}\label{eq:Wd}
W_d(y)=\kappa(d,y)
=\kappa_0\frac{(1-|d|^2)^{1/(p-1)}}{(1+d\cdot y)^{2/(p-1)}},
\qquad
\kappa_0=\left[\frac{2(p+1)}{(p-1)^2}\right]^{1/(p-1)}.
\end{equation}
We use the formula wherever $1+d\cdot y>0$.
This is the explicit stationary family in similarity variables,
denoted by $\kappa(d,y)$ in \cite[equation~(1.7)]{MZ16}.
It comes from applying Lorentz transformations to the spatially
homogeneous blow-up solution; the corresponding exact solutions are
\[
u_d(x,t)=\kappa_0(1-|d|^2)^{k/2}
\bigl(T_0-t+d\cdot(x-x_0)\bigr)^{-k}.
\]
The formula remains valid in the superconformal range.
The pairs $\sigma\lambda(W_d,0)$, with $\sigma\in\{-1,1\}$
and $\lambda>0$, will serve as comparison data.
For $\lambda\ne1$ they are not stationary solutions.

Write $\eta=1-|d|$. The error is measured in the degenerate energy norm
\begin{equation}\label{eq:X}
\|(f,g)\|_{\X_R}^2
=\int_{B_R}\left[g^2+f^2+R^2|\nabla f|^2
-(y\cdot\nabla f)^2\right]dy,
\qquad R\geq1,
\end{equation}
defined on $H^1(B_R)\times L^2(B_R)$.
Whenever $R|d|<1$, put
\begin{equation}\label{eq:error}
\E^R_{\lambda,d,\sigma}(f,g)
=\frac{\|(f,g)-\sigma\lambda(W_d,0)\|_{\X_R}}
{\lambda(1-|d|)^{\mu/2}}.
\end{equation}
The normalization is the size of the comparison profile in this norm
as $|d|\uparrow1$; see Lemma~\ref{lem:profile-size}.

For $m>0$, define
\begin{equation}\label{eq:qm}
q_\infty=\frac{2(N-2k)}{N+1}\in(0,1),
\qquad q_m=\max\{0,\,1-m^{p-1}(1-q_\infty)\}<1.
\end{equation}
Choose $c\in(q_m,1)$ and
\begin{equation}\label{eq:eta-cap}
0<\eta_c<\min\left\{1,\frac{\delta_0^{-1}-1}{c}\right\},
\qquad R(\eta)=1+c\min\{\eta,\eta_c\}.
\end{equation}
These choices ensure that the comparison profiles and the solution
are both defined on $B_{R(\eta)}$ at late times.
Define the best normalized error by
\begin{equation}\label{eq:minerror}
\E_m(s)=\inf_{\substack{|d|<1,\ \lambda\geq m\\ \sigma=\pm1}}
\E^{R(1-|d|)}_{\lambda,d,\sigma}(w(s),w_s(s)).
\end{equation}

\begin{theorem}[Blow-up bound under profile approximation]
\label{thm:bound}
Assume \eqref{eq:nc}, and fix $m,c,\eta_c$ as above.
There exist $\varepsilon,C>0$, depending only on
$N,p,m,c,\delta_0,\eta_c$, such that the hypothesis
\begin{equation}\label{eq:approximation}
\limsup_{s\to\infty}\E_m(s)<\varepsilon
\end{equation}
implies the following bound for some $t_1<T_0$:
\begin{equation}\label{eq:physical}
\begin{split}
&\tau^{k-N/2}\|u(t)\|_{L^2(B(x_0,\tau))}\\
&\quad+\tau^{k+1-N/2}
\left(\|\nabla u(t)\|_{L^2(B(x_0,\tau))}
+\|u_t(t)\|_{L^2(B(x_0,\tau))}\right)\leq C,
\end{split}
\end{equation}
where $\tau=T_0-t$ and $t_1\leq t<T_0$.
More precisely, for every sufficiently large $s$,
\begin{equation}\label{eq:pointwise-bound}
\E_m(s)<\varepsilon\quad\Longrightarrow\quad M(s)\leq C.
\end{equation}
Every comparison with
$\E^{R(\eta)}_{\lambda,d,\sigma}(w(s),w_s(s))<\varepsilon$
also satisfies $\eta\geq\eta_*$ and $\lambda\leq C$,
for some $\eta_*>0$ depending only on the same parameters.
\end{theorem}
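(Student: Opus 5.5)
The plan is to reduce everything to two quantitative facts about a single comparison $(\lambda,d,\sigma)$ with $\E^{R(\eta)}_{\lambda,d,\sigma}(w(s),w_s(s))<\varepsilon$: first, $\eta=1-|d|$ is bounded below by some $\eta_*>0$ (exclusion of boundary concentration); second, $\lambda$ is bounded above. Once both hold, $\|\sigma\lambda(W_d,0)\|_{\X_{R(\eta)}}\le C$, since $W_d$ is smooth and bounded on $B_{R(\eta)}$ when $R(\eta)|d|\le (1+c\eta)(1-\eta)<1$ uniformly for $\eta\ge\eta_*$. The triangle inequality then gives $\|(w,w_s)\|_{\X_{R(\eta)}}\le C(1+\varepsilon)$. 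Because $R(\eta)>1$ by the fixed amount $c\min\{\eta_*,\eta_c\}$, the weight $R^2|\nabla f|^2-(y\cdot\nabla f)^2\ge (R^2-1)|\nabla f|^2$ is nondegenerate on $B$, so $M(s)\le C$. This is \eqref{eq:pointwise-bound}, and \eqref{eq:physical} follows by undoing the similarity change of variables \eqref{eq:similarity} for $s\ge s_1$ with $\E_m(s)<\varepsilon$.

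For the lower bound on $\eta$ and the upper bound on $\lambda$, I would use the Lyapunov structure. In the superconformal range the Hamza--Zaag energy $E$, taken on balls or cones with admissible vertices (the translates by up to $R(\eta)-1$ allowed by \eqref{eq:nc} and \eqref{eq:eta-cap}), is nonincreasing and, for a global-in-$s$ solution, nonnegative. Together with the approximation, this gives the lower bound $E\ge0$ at every such vertex. Next I would prove an upper perturbation estimate: for $(f,g)$ within $\varepsilon\lambda\eta^{\mu/2}$ of $\sigma\lambda(W_d,0)$ in $\X_R$, one has $E(f,g)\le E(\sigma\lambda W_d,0)+C\varepsilon\lambda^{p+1}\eta^{\mu/2}\times(\text{profile scale})$. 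Here Lemma~\ref{lem:profile-size} fixes the normalization, and the nonlinear term is handled by a Hardy--Sobolev inequality adapted to the degenerate weight near $|y|=R$. The profile energy itself is explicit. For $\lambda W_d$ it splits into a quadratic part $\sim\lambda^2 c_2\eta^{\mu}$ and a potential part $-\lambda^{p+1}c_{p+1}\eta^{\mu}$, with the constants computed along a vertex shifted toward $-d/|d|$. The computation of the constants $q_\infty$ and $q_m$ is designed so that, for $\lambda\ge m$ and the chosen vertex with $c>q_m$, this energy is $\le -\theta(m)\lambda^{p+1}\eta^{\mu}$ with $\theta>0$ whenever $\eta\le\eta_c$-type smallness holds. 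Choosing $\varepsilon$ small relative to $\theta$ contradicts $E\ge0$ unless $\eta\ge\eta_*$. The same inequality with $\eta\ge\eta_*$ fixed shows that the energy becomes negative once $\lambda$ is large, because $\lambda^{p+1}$ dominates $\lambda^2$; this yields $\lambda\le C$.

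The main obstacle is the boundary regime $\eta\to0$. There the profile concentrates at $y=-d/|d|$, near where the weight degenerates, and the perturbation estimate must lose only $o(\eta^{\mu})$ uniformly in time-varying parameters. I would first try to rescale around the concentration point by $\eta$, so that $W_d$ converges to a limiting profile on a half-space/cone. On that limit I would verify the strict sign of the limiting energy, which is the role of $q_\infty$. Finally I would control the error term via the $\X_R$-norm, using that $R-1=c\eta$ matches the concentration scale.
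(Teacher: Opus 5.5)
Your outline is essentially the paper's proof. The bound $\eta\geq\eta_*$ is Theorem~\ref{thm:exclusion}: the vertex is moved toward $-d/|d|$ and also earlier in time, strictly inside the non-characteristic cone, so that $(W_d,0)$ is transported exactly to the time-shifted explicit data $\mathbf W_{d,q\eta}$ with $q\in(q_m,c)$, whose energy is at most $-2b_0\lambda^{p+1}\eta^\mu$ for $\lambda\geq m$. This sign survives the error through a one-sided (convexity plus degenerate Sobolev) perturbation estimate and a transport bound that uses the collar $R_\eta-1=c\eta$, contradicting $H\geq0$. The amplitude bound at the original vertex and the coercivity of $\X_{R(\eta)}$ on $B$ then go exactly as you say.

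One difference and two small corrections. The paper gets the sign of the shifted-profile energy from explicit moment asymptotics rather than a rescaled limit. If you do take a limit, the scaling must be parabolic ($\eta$ normally, $\sqrt\eta$ tangentially, as in Lemma~\ref{lem:cluster-limit}); an isotropic $\eta$-rescaling degenerates to a half-space limit with infinite energy. Also, only $e^{-2\mu s}H$ is monotone, not $H$ itself. Finally, $H\geq0$ at admissible vertices holds without any approximation hypothesis.
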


The approximation in \eqref{eq:approximation} is an assumption on
the solution. No fixed comparison profile is prescribed, and the
parameters may vary with $s$ without continuity or convergence.
Only the lower bound $\lambda\geq m$ is imposed; the upper bound
on the amplitude follows from the equation. The error is controlled
solely by \eqref{eq:X}, with no additional smallness assumption in
$L^{p+1}$. The time $t_1$ may depend on the solution.

The exclusion used to prove Theorem~\ref{thm:bound} is independent
of the approximation hypothesis.

\begin{theorem}[Separation from concentrating comparison profiles]
\label{thm:exclusion}
Fix $m>0$, $c\in(q_m,1)$ and a non-characteristic margin
$\delta_0$. There exist $\eta_*,\varepsilon_*>0$, depending only on
$N,p,m,c,\delta_0$, such that every solution satisfying \eqref{eq:nc}
has a time $s_*$ for which
\begin{equation}\label{eq:main-exclusion}
\E^{1+c\eta}_{\lambda,d,\sigma}(w(s),w_s(s))
\geq\varepsilon_*
\end{equation}
for all $s\geq s_*$, $0<\eta=1-|d|<\eta_*$,
$\lambda\geq m$ and $\sigma=\pm1$.
The constants may be chosen so that
$1+c\eta_*<\delta_0^{-1}$.
\end{theorem}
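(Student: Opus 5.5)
We argue by contradiction, combining a Lyapunov functional with an explicit evaluation of its value on the comparison profiles. The abstract names the tool: the superconformal Hamza--Zaag energy, centred at admissible vertices, is nonnegative. For the non-characteristic point we use the family of vertices $(x_0+\xi,\,T_0+\theta)$ with $|\xi|$ and $\theta$ small; these lie in the admissible region by \eqref{eq:graphlower} and the margin $\delta_0$. Writing the solution in the similarity variables attached to such a vertex gives a function $w_{\xi,\theta}$. It is defined on a ball slightly larger than $B$, and the energy $E(w_{\xi,\theta}(s))$ is nonnegative for all large $s$. By the Lipschitz property of $T$, the time $s_*$ can be chosen uniformly over a compact set of vertex parameters. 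The plan is to show that when $\eta$ is small and the error is small, the profile $\sigma\lambda W_d$, seen from a well-chosen tilted vertex, has energy of order $-\lambda^{p+1}\eta^{\mu}$ or worse. The perturbation term is only $O(\varepsilon\lambda^{2}\eta^{\mu}+\dots)$, so the total is negative, which is a contradiction.

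\textbf{Step 1: size and energy of the profile.} Lemma~\ref{lem:profile-size} gives $\|(W_d,0)\|_{\X_{1+c\eta}}\sim\eta^{\mu/2}$. The mass of $W_d$ concentrates near the boundary point $y=-d/|d|$, where $1+d\cdot y\approx\eta$. We choose the vertex shift along $d$ at scale $\eta$ (in original variables, $\xi\approx -c'\eta e^{-s}d/|d|$ with $\theta$ comparable). The transformed comparison datum is then again an explicit function, essentially $\lambda$ times a Lorentz profile with a modified parameter, evaluated on a ball whose boundary cuts into the concentration region. We compute its Hamza--Zaag energy by scaling to a boundary layer of width $\eta$. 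The quadratic part scales like $\lambda^2\eta^{\mu}$ and the potential part like $-\lambda^{p+1}\eta^{\mu}$, each times an explicit constant depending on $c$. The condition $c>q_m$, through $q_\infty$ and $m^{p-1}$, is exactly what makes the combination $\lambda^2A(c)-\lambda^{p+1}B(c)\le -\gamma\lambda^2$ for every $\lambda\ge m$. I expect the definition of $q_m$ to encode precisely this inequality. For $\lambda=1$ the exact solution $u_d$ blows up before the vertex, so negativity is consistent.

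\textbf{Step 2: upper perturbation estimate.} Write $(w,w_s)=\sigma\lambda(W_d,0)+(f,g)$ with $\|(f,g)\|_{\X_{1+c\eta}}\le\varepsilon\lambda\eta^{\mu/2}$. We then bound $E(\text{solution})-E(\text{profile})$ from above. The quadratic terms are controlled directly in $\X$, after transferring to the tilted vertex; this costs a factor depending only on $c$, since the tilted ball lies inside $B_{1+c\eta}$. The nonlinear term enters with a favorable sign for an \emph{upper} bound, $-\int F(w)$, so only the cross term $\int |W|^{p-1}W f$ needs control. This is the main obstacle: the $\X$ norm is degenerate at the boundary, so we have no $L^{p+1}$ control of $f$. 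I would first try the radial Hardy-type inequality available from the $R^2|\nabla f|^2-(y\cdot\nabla f)^2$ weight together with $f^2$. This gives a weighted $L^2$ bound $\int (1+c\eta-|y|)^{-1+}f^2\lesssim\|f\|_{\X}^2/\eta$. Pairing this by Cauchy--Schwarz against $\lambda^p|W_d|^p$ should yield $O(\varepsilon\lambda^{p+1}\eta^{\mu})$. The convexity $F(W+f)\ge F(W)+F'(W)f$ then handles the remaining nonlinear part.

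\textbf{Step 3: conclusion.} Combining the two steps gives $0\le E\le -\gamma\lambda^2\eta^\mu+C\varepsilon(\lambda^2+\lambda^{p+1})\eta^\mu$. For large $\lambda$ the $-\lambda^{p+1}$ term dominates. We take $\varepsilon_*$ small, and $\eta_*$ small so that lower-order boundary-layer corrections are negligible and $1+c\eta_*<\delta_0^{-1}$. This gives the contradiction, uniformly in $s\ge s_*$, $\lambda\ge m$ and $\sigma$.
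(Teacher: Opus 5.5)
\textbf{Gap: the cross-term estimate.} Your architecture is the paper's. You use an $s$-dependent admissible vertex, displaced by $O(\eta e^{-s})$ toward $-e$ and moved earlier with slope $\delta$, i.e.\ $\theta=-\delta|\xi|$; $\delta>\delta_0$ gives admissibility and $\delta<|d|$ a positive shift. Transport sends $\sigma\lambda(W_d,0)$ exactly to a shifted explicit profile of energy $\le -b\lambda^{p+1}\eta^{\mu}$ once the shift is $q\eta$ with $q>q_m$. Convexity reduces the nonlinearity to the linear cross term. The gap is exactly at the step you flagged, and your proposed tool does not close it.

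Let $\zeta\ge0$ be the depth from $-Re$ along $e$. Near the concentration point, $W_d^{2p}\asymp\eta^{k+2}(\eta+\zeta)^{-2k-4}$ on cross-sections of volume $\asymp\zeta^{a}$, on which $R-|y|$ is typically of size $\zeta$. Hence $\int W_d^{2p}(R-|y|)^{1-\epsilon}\,dy\asymp\eta^{\mu-\epsilon}$. With your weighted bound, Cauchy--Schwarz gives $|\int W_d^pf|\lesssim\eta^{(\mu-\epsilon-1)/2}\|f\|_{\X}$. So the cross term is of order $\varepsilon\lambda^{p+1}\eta^{\mu-(1+\epsilon)/2}$, which dominates the main term $-b\lambda^{p+1}\eta^{\mu}$ as $\eta\to0$ for fixed $\varepsilon$. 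Since $\varepsilon_*$ must be uniform in $\eta<\eta_*$, Step~3 fails.

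Removing the factor $1/\eta$ (the radial Hardy inequality does not need it) still leaves a loss $\eta^{-\epsilon/2}$, and $\epsilon=0$ is false (take $f\equiv1$). In fact no weight $\omega(R-|y|)$ can work. By Cauchy--Schwarz on the layer of width $\eta$ under the cap, the dual requirement $\int W_d^{2p}\omega^{-1}\lesssim\eta^{\mu}$ forces $\int_{\{R-|y|<\eta\}}\omega\,dy\gtrsim1$. But $\bigl(\log\frac1{\max\{R-|y|,\eta\}}\bigr)^{\beta'}$ with $\beta'<\frac12$, cut off near $\partial B_R$, has $\X_R$ norm bounded uniformly in $\eta$ and size $(\log\frac1\eta)^{\beta'}$ on that whole layer. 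A boundary-distance weight spreads over all of $\partial B_R$, whereas $W_d^p$ lives near one boundary point.

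\textbf{The missing idea and two smaller points.} What is needed is the geometry in which $\X_1$ is a genuine $H^1$ norm. The paper lifts $f$ to $F(y,z)=f(y)$ on $\mathbb S^{N+1}\subset\R^N\times\R^2$. Then $\|(f,0)\|_{\X_1}^2=(2\pi)^{-1}\|F\|_{H^1(\mathbb S^{N+1})}^2$, so $\|f\|_{L^{2(N+1)/(N-1)}(B)}\lesssim\|(f,0)\|_{\X_1}$. The moment bounds give $\|W_{d,q\eta}^p\|_{L^{2(N+1)/(N+3)}(B)}\lesssim\eta^{\mu/2}$. The pairing is then $O(\eta^{\mu/2}D)$, and the error is $O(\varepsilon\lambda^{p+1}\eta^{\mu})$ uniformly.

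Your Hardy--Cauchy--Schwarz idea can be repaired in the same picture by centring the weight at the lifted concentration point $(-e,0)$. Its squared chordal distance to $(y,z)$ is $2(1+e\cdot y)$. Hardy's inequality on $\mathbb S^{N+1}$ then gives $\int_Bf^2(1+e\cdot y)^{-1}dy\lesssim\|(f,0)\|_{\X_1}^2$, while $\int_BW_{d,q\eta}^{2p}(1+e\cdot y)\,dy\asymp\eta^{\mu}$.

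First smaller point: the transfer to the new frame needs more than inclusion of the transported ball in $B_{1+c\eta}$. It needs a gap $\gtrsim\eta$, i.e.\ a shift $q\eta$ with $q_m<q<c$. The reason is that the new velocity contains $b(y-Y)\cdot\nabla f(y)$ with $|y-Y|\sim\eta$. Both this term and the form comparison $I-Y\otimes Y\le C(R^2I-y\otimes y)$ require $R^2I-y\otimes y\ge\kappa\eta I$ on the transported ball.

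Second smaller point: the main term in Step~3 must be $-\gamma\lambda^{p+1}\eta^{\mu}$, which is what $q>q_m$ and $\lambda\ge m$ give, not $-\gamma\lambda^2\eta^{\mu}$. Otherwise the $C\varepsilon\lambda^{p+1}\eta^{\mu}$ error is not absorbed for large $\lambda$.
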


The proofs of Theorems~\ref{thm:exclusion} and \ref{thm:bound}
are given in Sections~\ref{sec:single-proof} and \ref{sec:bound},
respectively. The observation ball in \eqref{eq:main-exclusion}
extends beyond $B$ by a distance proportional to $\eta$.
This region is available from the non-characteristic hypothesis
and is used in the change of vertex.

\subsection{Comparison with previous bounds}

For $1<p\leq p_c$, Merle and Zaag proved the upper bound at the
ODE rate without a profile approximation hypothesis
\cite{MZ03,MZ05c,MZ05g}. In the superconformal range, the general
bounds of Killip, Stovall and Vi\c{s}an \cite{KSV14} were improved
by Hamza and Zaag \cite{HZ13,HZ25}. To compare the latter result
with \eqref{eq:physical}, let
\begin{equation}\label{eq:local-time-energy}
\mathcal I_u(t)=
\int_t^{(t+T_0)/2}\int_{|x-x_0|<T_0-r}
\bigl(|\nabla u(x,r)|^2+|u_t(x,r)|^2\bigr)\,dx\,dr.
\end{equation}
Hamza and Zaag \cite[Theorem~1]{HZ25} prove, for every fixed
$\ell>0$ and at every non-characteristic point,
\begin{equation}\label{eq:HZ}
|\log(T_0-t)|^\ell\mathcal I_u(t)\leq C_\ell.
\end{equation}
Their result also controls a signed energy and a scaled $L^2$ norm.

Under \eqref{eq:approximation}, Theorem~\ref{thm:bound} gives
the full upper bound \eqref{eq:physical} at each late time. In
particular, it yields the stronger decay estimate
\begin{equation}\label{eq:power-improvement}
\mathcal I_u(t)\leq C(T_0-t)^{2\mu},
\qquad 2\mu=N-1-\frac4{p-1}>0.
\end{equation}
Indeed, squaring the derivative terms in \eqref{eq:physical}
and integrating gives
\[
\mathcal I_u(t)
\leq C\int_{\tau/2}^{\tau}r^{N-2k-2}\,dr
=\frac{C(1-2^{-2\mu})}{2\mu}\tau^{2\mu},
\qquad \tau=T_0-t.
\]
Thus a positive power of $T_0-t$ replaces the inverse logarithmic
bound on this class of solutions. The gain is in the rate and in
the pointwise-in-time control of the full local energy norm.
The additional hypothesis is the approximation on
$B_{R(1-|d|)}$ at the scale specified in \eqref{eq:error}.
The generality of \cite{HZ25} is greater: its hypotheses do not
include \eqref{eq:approximation}.

Theorem~\ref{thm:exclusion} provides the geometric information
behind this implication. It rules out accurate approximation by
profiles with $|d|\uparrow1$, where the full gradient norm diverges
although the degenerate energy norm tends to zero. Smallness
relative to the full gradient norm does not by itself give the
normalized error bound in \eqref{eq:error}.
Proposition~\ref{prop:log-gap} gives comparison data which satisfy
the numerical growth estimates \eqref{eq:HZ} and the accompanying
bounds of \cite{HZ25}, but are excluded by
Theorem~\ref{thm:exclusion}. This illustrates the information
supplied by the exclusion; the comparison data are not PDE
solutions.

\subsection{Historical background}

The existence of blow-up and the description of its rate are
distinct questions. Levine's concavity method establishes
finite-time blow-up from negative energy for broad classes of
nonlinear wave equations \cite{Levine74}. Caffarelli and Friedman
\cite{CF86} studied the geometry of the blow-up boundary under
positivity and structural assumptions. The constructions of
Kichenassamy and Littman \cite{KL93a,KL93b} near prescribed analytic
space-like surfaces, including for exponential nonlinearities,
further established the role of that geometry. Alinhac's monograph
\cite{Alinhac95} develops the maximal influence domain and local
blow-up mechanisms.

For the power equation, Antonini and Merle \cite{AM01} obtained
optimal estimates for positive solutions and introduced a
similarity-variable approach to the rate problem. The work of
Merle and Zaag cited above removed the positivity assumption and
treated the conformal exponent. The distinction between the
ranges is reflected in the weight
$(1-|y|^2)^{2/(p-1)-(N-1)/2}$: its exponent changes sign at $p_c$.
For $p>p_c$, Killip, Stovall and Vi\c{s}an \cite{KSV14}, in work
first circulated in 2012, used functionals associated with the
dilation identity for the nonlinear Klein--Gordon equation.
Hamza and Zaag \cite{HZ13} constructed a different Lyapunov
functional in similarity variables. Its nonlinear bulk dissipation
is central to their rate estimates and its sign is the starting
point of the present argument.

The description of profiles is more complete in one space
dimension. Merle and Zaag identified the family \eqref{eq:Wd}
at non-characteristic points \cite{MZ07}, proved openness of that
set and regularity of the blow-up curve \cite{MZ08}, and extended
the description to radial solutions away from the origin
\cite{MZ11r}. At characteristic points, several solitons occur;
their structure and the local finiteness of the characteristic
set were established in \cite{MZ12a,MZ12b}. C\^ote and Zaag
\cite{CZ13} refined these asymptotics and constructed solutions
with a prescribed number of solitons of alternating signs.

In higher dimensions, Merle and Zaag studied stability and
regularity at non-characteristic points with an explicit profile
\cite{MZ15}, and the dynamics near the stationary family
\cite{MZ16}, in the subconformal range. Their construction of a
nearly pyramidal blow-up surface \cite{MZpyramid} involves four
interacting profiles at an isolated characteristic point in two
dimensions. These results show why interactions between profiles
must be considered, but they do not provide a superconformal
profile decomposition.

A related question is stability near a prescribed self-similar
solution. Donninger and Sch\"orkhuber \cite{DS12} proved stability
of the spatially homogeneous blow-up in the energy topology for
radial data in dimension three and $1<p\leq3$. Ostermann
\cite{O24} obtained stability without symmetry restrictions in
all dimensions and for all superlinear powers, in suitable
higher-regularity spaces. Liu \cite{Liu26} treated the full
explicit family in the superconformal setting through spectral
equivalence under Lorentz transformations. These stability
results concern perturbations near a fixed profile. Our exclusion
is uniform as $|d|\uparrow1$, under the error and geometric
conditions in Theorem~\ref{thm:exclusion}.

\subsection{The argument and further results}

The proof uses the superconformal energy of Hamza and Zaag at
different admissible vertices. A change of vertex makes the energy
of a concentrating comparison profile negative. An upper
perturbation estimate preserves this sign under the error bound
\eqref{eq:error}, contradicting nonnegativity for the actual
solution. Section~\ref{sec:prelim} identifies the energy and gives
the change of variables; Sections~\ref{sec:threshold} and
\ref{sec:perturbation} establish the sign and perturbation
estimates. Section~\ref{sec:geometry} proves the two main theorems.

The remaining exclusions address sums of profiles.
Theorem~\ref{thm:multi} treats a family with an isolated dominant
member, allowing arbitrary signs and ratios of concentration
scales under an explicit relative summability bound.
Theorem~\ref{thm:cluster} treats a fixed number of comparable
profiles whose directions meet at the concentration scale,
under a noncancellation condition on the first singular group.
Its proof uses a limiting energy in $\R^{N+1}$.
Section~\ref{sec:comparison} examines the limits of growth
estimates and sign constraints taken alone.
Section~\ref{sec:horizons} then formulates the extraction and
evolution questions needed for a general upper bound.

\section{The superconformal energy}\label{sec:prelim}

With the convention that $(y\cdot\nabla)^2$ is the composition of two
operators, the similarity equation is
\begin{equation}\label{eq:sse}
w_{ss}+2y\cdot\nabla w_s+(2k+1)w_s
=\LL w-k(k+1)w+|w|^{p-1}w,
\end{equation}
where
\begin{equation}\label{eq:operator}
\begin{split}
\LL f
&=\Delta f-(y\cdot\nabla)^2f-(2k+1)y\cdot\nabla f\\
&=(1-|y|^2)^{-\alpha}
\diver\left((1-|y|^2)^\alpha(I-y\otimes y)\nabla f\right).
\end{split}
\end{equation}
The identity $N+2\alpha=2k+1$ explains the coefficient in this expression.

For a pair $(f,g)\in H^1(B)\times L^2(B)$, define the corrected energy
\begin{equation}\label{eq:H}
\begin{split}
H[f,g]=\int_B\bigg[
&\frac12g^2+\frac12\bigl(|\nabla f|^2-(y\cdot\nabla f)^2\bigr)
+\frac{k(k+1)}2f^2\\
&-\frac{|f|^{p+1}}{p+1}+\alpha fg-\frac{\alpha N}{2}f^2
\bigg]dy.
\end{split}
\end{equation}
With $\mu$ equal to the parameter $\eta$ of
\cite[equations~(1.18)--(1.19)]{HZ13}, this is exactly the
superconformal energy $E=E_0+I$ of that paper. In particular, the
correction $-\mu fg+\mu Nf^2/2$ is retained. Their decreasing
functional is
\begin{equation}\label{eq:HZ-functional}
\mathcal F_{\mathrm{HZ}}(s)
=e^{-2\mu s}H[w(s),w_s(s)],
\end{equation}
as defined in \cite[equation~(1.20)]{HZ13}.
We use the sign of $H$; monotonicity applies to
$\mathcal F_{\mathrm{HZ}}$, not to $H$ itself.

The energy $H$ is finite on $H^1(B)\times L^2(B)$ by \eqref{eq:nlw}, and is
continuous for the sum of the $\X_1$ and $L^{p+1}$ norms.
Write
\begin{equation}\label{eq:quadratic}
\mathcal Q[f,g]=H[f,g]+\frac1{p+1}\int_B|f|^{p+1}\,dy.
\end{equation}
Completing the square gives
\begin{equation}\label{eq:Q-square}
2\mathcal Q[f,g]
=\int_B\left[(g-\mu f)^2+
|\nabla f|^2-(y\cdot\nabla f)^2+\Lambda f^2\right]dy,
\qquad
\Lambda=k(k+1)+\mu N-\mu^2>0.
\end{equation}
Thus there are constants $c_Q,C_Q>0$, depending only on $N,p$, with
\begin{equation}\label{eq:Q-equivalence}
c_Q\|(f,g)\|_{\X_1}^2
\leq\mathcal Q[f,g]\leq C_Q\|(f,g)\|_{\X_1}^2.
\end{equation}
In particular the associated bilinear form is bounded on $\X_1$.
We will use an upper perturbation bound for $H$, rather than
continuity of $H$ in $\X_1$.

A vertex $(\widehat x,\widehat T)$ is called admissible if
$0<\widehat T\leq T(\widehat x)$. Its backward light cone lies in the
maximal influence domain. We recall the two facts from
\cite[Proposition~1.2 and equation~(2.4)]{HZ13} used below:
for a classical solution on a neighbourhood of $\overline B$,
\begin{equation}\label{eq:lyapunov}
\frac{d}{ds}\bigl(e^{-2\mu s}H[w,w_s]\bigr)
=-e^{-2\mu s}\left[
\int_{\partial B}(w_s-\mu w)^2\,dS
+\mu\frac{p-1}{p+1}\int_B|w|^{p+1}\,dy
\right],
\end{equation}
and, for the energy solution expressed at any admissible vertex,
\begin{equation}\label{eq:nonnegative}
H[\widehat w(\widehat s),\widehat w_{\widehat s}(\widehat s)]
\geq0.
\end{equation}
The trace identity will be applied only to explicit smooth profiles.
For the actual solution we use the energy-class sign statement
\eqref{eq:nonnegative}, so no classical boundary trace is assumed.

For $\nu<1-|d|$, let
\begin{equation}\label{eq:shifted}
W_{d,\nu}(y)=c_d(1+d\cdot y-\nu)^{-k},\qquad
c_d=\kappa_0(1-|d|^2)^{k/2},
\end{equation}
and define
\[
\mathbf W_{d,\nu}=(W_{d,\nu},Z_{d,\nu}),\qquad
Z_{d,\nu}=\nu\partial_\nu W_{d,\nu},\qquad
h_d(\nu)=H[\mathbf W_{d,\nu}].
\]
The curve $\nu(s)=\nu_0e^s$ gives an exact solution of
\eqref{eq:sse} on every interval on which the denominator is positive.
Thus $\mathbf W_{d,\nu}$ denotes the phase data of a time-shifted
explicit solution. Only $\nu=0$ gives the stationary pair $(W_d,0)$.

Here is the change of coordinates at a single physical time. Write
$\tau=T_0-t$, $\widehat\tau=\widehat T-t$, and assume both are positive.
Set
\begin{equation}\label{eq:map}
b=\frac{\widehat\tau}{\tau},\qquad
\gamma=\frac{\widehat x-x_0}{\tau},\qquad
y=\gamma+bY.
\end{equation}
For data $(f,g)=(w,w_s)$ at the original similarity time, define
\begin{equation}\label{eq:transport}
\begin{split}
\mathcal T_{b,\gamma}^{(1)}(f,g)(Y)
&=b^k f(y),\\
\mathcal T_{b,\gamma}^{(2)}(f,g)(Y)
&=b^k\left[
b\,g(y)+b(y-Y)\cdot\nabla f(y)+k(b-1)f(y)
\right].
\end{split}
\end{equation}
\begin{lemma}[Change of vertex]\label{lem:vertex-identity}
At the same physical time, the similarity data at the two vertices
satisfy
\[
(\widehat w,\widehat w_{\widehat s})
=\mathcal T_{b,\gamma}(w,w_s).
\]
If
\begin{equation}\label{eq:nu}
\nu=\frac{\widehat T-T_0-d\cdot(\widehat x-x_0)}
{\widehat T-t},
\end{equation}
then, wherever the profile denominator is positive,
\begin{equation}\label{eq:exact-transport}
\mathcal T_{b,\gamma}(W_d,0)=\mathbf W_{d,\nu}.
\end{equation}
\end{lemma}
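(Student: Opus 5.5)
The plan is a direct chain-rule computation at a fixed physical time $t$, carried out in two parts: first the general transport formula, then its evaluation on the explicit profile.

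\emph{The general identity.} At the vertex $(x_0,T_0)$ we have $u(x,t)=\tau^{-k}w\bigl((x-x_0)/\tau,s\bigr)$. At $(\widehat x,\widehat T)$ we have $\widehat w(Y,\widehat s)=\widehat\tau^{\,k}u(\widehat x+\widehat\tau Y,t)$ with $\widehat s=-\log\widehat\tau$. The same physical point satisfies $x_0+\tau y=\widehat x+\widehat\tau Y$, which is exactly $y=\gamma+bY$ in \eqref{eq:map}. The first component is then immediate:
\[
\widehat w=\Bigl(\tfrac{\widehat\tau}{\tau}\Bigr)^k\tau^k u=b^k w(y).
\]
For the second component I differentiate $u$ in $t$ at fixed $x$, which gives
\[
u_t=\tau^{-k-1}\bigl[kw+y\cdot\nabla w+w_s\bigr],\qquad \nabla_x u=\tau^{-k-1}\nabla w.
\]
Since $\partial_{\widehat s}=\widehat\tau\,\partial_t$ at fixed $Y$, and $\partial_t$ of $x=\widehat x+\widehat\tau Y$ equals $-Y$, I get
\[
\widehat w_{\widehat s}=\widehat\tau^{\,k}\bigl[-ku-\widehat\tau\,Y\cdot\nabla_xu+\widehat\tau u_t\bigr].
\]
Substituting and collecting powers yields
\[
\widehat w_{\widehat s}=b^k\bigl[-kw-bY\cdot\nabla w+b(kw+y\cdot\nabla w+w_s)\bigr],
\]
which is $\mathcal T^{(2)}_{b,\gamma}$. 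This is valid for smooth solutions. For energy solutions the map $\mathcal T_{b,\gamma}$ is continuous $H^1\times L^2\to L^2$ on the relevant balls, so the identity passes to the limit by density and finite speed of propagation, and then holds almost everywhere. I expect this regularity bookkeeping to be the only real subtlety; it is the one step that goes beyond algebra.

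\emph{The profile identity.} Write $1+d\cdot y=b(1+d\cdot Y)+(1-b+d\cdot\gamma)$. By \eqref{eq:map}, $b\nu=b-1-d\cdot\gamma$ is equivalent to \eqref{eq:nu}, because $\widehat\tau-\tau=\widehat T-T_0$. Hence $1+d\cdot y=b(1+d\cdot Y-\nu)$, and the first component becomes
\[
b^kc_d(1+d\cdot y)^{-k}=c_d(1+d\cdot Y-\nu)^{-k}=W_{d,\nu}(Y).
\]

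For the second component I use $\nabla W_d=-kc_d\,d\,(1+d\cdot y)^{-k-1}$ and $(y-Y)\cdot d=d\cdot\gamma+(b-1)d\cdot Y$. Factoring out $b^k k c_d(1+d\cdot y)^{-k-1}$ leaves the bracket
\[
(b-1)(1+d\cdot\gamma+b\,d\cdot Y)-b\,d\cdot\gamma-b(b-1)\,d\cdot Y=b-1-d\cdot\gamma=b\nu.
\]
Using $b^k(1+d\cdot y)^{-k-1}=b^{-1}(1+d\cdot Y-\nu)^{-k-1}$, the second component equals $k\nu c_d(1+d\cdot Y-\nu)^{-k-1}$. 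This is precisely $\nu\partial_\nu W_{d,\nu}=Z_{d,\nu}$, which gives \eqref{eq:exact-transport} wherever the denominator is positive.
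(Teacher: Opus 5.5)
Your proposal is correct and follows the paper's proof. It uses the same chain-rule substitution through $(u,u_t)$ at a fixed physical time and the same factorization $1+d\cdot(\gamma+bY)=b(1+d\cdot Y-\nu)$, and your explicit bracket computation for the second component fills in what the paper leaves as ``inserting $W_d$''.

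The density argument is unnecessary, though harmless: both $(w,w_s)$ and $(\widehat w,\widehat w_{\widehat s})$ are given at the single time $t$ by algebraic formulas in $(u(t),u_t(t))\in H^1_{\mathrm{loc}}\times L^2_{\mathrm{loc}}$. The identity therefore holds directly in $L^2$ on each controlled ball, as the paper notes.
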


\begin{proof}
The first component follows from the definition of the similarity
variables. For the second, use
\[
\tau^{k+1}u_t=w_s+kw+y\cdot\nabla w,\qquad
\widehat w_{\widehat s}
=\widehat\tau^{k+1}u_t-k\widehat w-Y\cdot\nabla_Y\widehat w.
\]
Substitution gives \eqref{eq:transport}. For energy data these are
identities in $L^2$ on each controlled ball. Finally,
$1+d\cdot(\gamma+bY)=b(1+d\cdot Y-\nu)$;
inserting $W_d$ into both components gives
\eqref{eq:exact-transport}.
\end{proof}

\section{Energy of the shifted explicit profiles}\label{sec:threshold}

The profiles $W_d$ are defined in \eqref{eq:Wd}, and their shifted
phase data $\mathbf W_{d,\nu}$ in \eqref{eq:shifted}.
This section determines the sign of $h_d(\nu)=H[\mathbf W_{d,\nu}]$.
The threshold and its asymptotics will be used to choose the
vertex in the exclusion argument.

For $j>0$, write
\[
M_j(d,\nu)=\int_B(1+d\cdot y-\nu)^{-j}\,dy.
\]
The constant denoted by $M_j$ is a moment, and is unrelated to $M(s)$.

\begin{lemma}[Moment formula]\label{lem:closed}
For all $|d|<1$ and $\nu<1-|d|$,
\begin{equation}\label{eq:closed}
\begin{split}
h_d(\nu)=c_d^2\bigg[
&k^2(|d|^2-1+\nu)M_{2k+2}\\
&+\bigl(k^2(1-\nu)+\alpha k\nu\bigr)M_{2k+1}
+\frac{k-\alpha N}{2}M_{2k}
\bigg].
\end{split}
\end{equation}
Moreover,
\begin{equation}\label{eq:d-zero}
h_0(\nu)=\frac{\kappa_0^2|B|(N-1)}4
(1-\nu)^{-2k-1}(N-2k-N\nu).
\end{equation}
\end{lemma}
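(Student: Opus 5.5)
The plan is to substitute the explicit pair $\mathbf W_{d,\nu}=(W_{d,\nu},Z_{d,\nu})$ into \eqref{eq:H}, write every integrand as a power of $D:=1+d\cdot y-\nu$ (which is positive on $\overline B$ because $\nu<1-|d|$), and then remove the single non-moment term with one integration by parts that produces no boundary term. The explicit formulas are
\[
W=c_dD^{-k},\qquad Z=\nu\partial_\nu W=k\nu c_dD^{-k-1},\qquad \nabla W=-kc_dD^{-k-1}d.
\]
From these, $\frac12Z^2$, $\alpha WZ$ and $(\frac{k(k+1)}2-\frac{\alpha N}2)W^2$ are already multiples of $D^{-2k-2}$, $D^{-2k-1}$ and $D^{-2k}$, respectively.

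For the nonlinear term, use $k(p+1)=2k+2$ together with
\[
c_d^{p-1}=\kappa_0^{p-1}(1-|d|^2)=\frac{2(p+1)}{(p-1)^2}(1-|d|^2)=k(k+1)(1-|d|^2).
\]
This gives $\frac{|W|^{p+1}}{p+1}=c_d^2\,\frac{k^2}{2}(1-|d|^2)D^{-2k-2}$, since $(k+1)/(p+1)=k/2$. The gradient term is
\[
\tfrac12\bigl(|\nabla W|^2-(y\cdot\nabla W)^2\bigr)=\tfrac{k^2}{2}c_d^2D^{-2k-2}\bigl(|d|^2-(d\cdot y)^2\bigr),
\]
which is not directly a combination of moments.

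The key step handles this term with the tangential vector field $V=d-(d\cdot y)y$. It satisfies $V\cdot y=0$ on $\partial B$, $\diver V=-(N+1)\,d\cdot y$, and $\nabla D\cdot V=|d|^2-(d\cdot y)^2$. Applying the divergence theorem to $D^{-j+1}V$ on $B$ (the boundary flux vanishes) gives
\[
(j-1)\int_B D^{-j}\bigl(|d|^2-(d\cdot y)^2\bigr)dy=-(N+1)\int_B (d\cdot y)D^{-j+1}\,dy .
\]
We take $j=2k+2$ and substitute $d\cdot y=D-(1-\nu)$. The gradient contribution then becomes $-\frac{k^2(N+1)}{2(2k+1)}\bigl(M_{2k}-(1-\nu)M_{2k+1}\bigr)$ times $c_d^2$.

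If the coefficients do not match \eqref{eq:closed} after this step, a second relation is available. It comes from the same identity with $j=2k+1$, or from the radial identity $\diver(yD^{-j})=ND^{-j}-j(d\cdot y)D^{-j-1}$, whose boundary term must then be tracked. The expected obstacle is exactly this: choosing the combination of identities that cancels all boundary contributions and yields the stated coefficients.

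After collecting terms, the $M_{2k+2}$ coefficient is $k^2\nu^2/2-k^2(1-|d|^2)/2$ before any further rewriting. The target coefficient $k^2(|d|^2-1+\nu)$ differs from this, so some $D^{-2k-2}$ mass must be traded for lower moments. The identity $D^{-2k-1}=D^{-2k-2}\bigl(1-\nu+d\cdot y\bigr)$ combined with the divergence relation should do this. I would verify the final coefficients against the case $d=0$, which serves as a consistency check.

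For \eqref{eq:d-zero}, set $d=0$ and $c_0=\kappa_0$. Then $D\equiv1-\nu$ is constant and $M_j=|B|(1-\nu)^{-j}$, so
\[
h_0(\nu)=\kappa_0^2|B|(1-\nu)^{-2k-2}\Bigl[k^2(\nu-1)+(k^2(1-\nu)+\alpha k\nu)(1-\nu)+\tfrac{k-\alpha N}{2}(1-\nu)^2\Bigr].
\]
With $\alpha=k-\frac{N-1}2$, the bracket factors as $(1-\nu)\frac{N-1}4(N-2k-N\nu)$, which gives \eqref{eq:d-zero}. As an independent check of \eqref{eq:closed}, this value can also be computed directly from \eqref{eq:H}, where the gradient term vanishes.
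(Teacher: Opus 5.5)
There is a genuine gap: you never reach \eqref{eq:closed}, because the premise of your key step is false. Since $d\cdot y=D-(1-\nu)$, the factor in the gradient term is a quadratic polynomial in $D$:
\[
|d|^2-(d\cdot y)^2=\bigl(|d|^2-(1-\nu)^2\bigr)+2(1-\nu)D-D^2 .
\]
Hence $\tfrac12\bigl(|\nabla W|^2-(y\cdot\nabla W)^2\bigr)$ integrates directly to $\tfrac{k^2}{2}c_d^2\bigl[(|d|^2-(1-\nu)^2)M_{2k+2}+2(1-\nu)M_{2k+1}-M_{2k}\bigr]$. Add your (correct) expressions for the velocity, mass, cross and potential terms. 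The coefficient of $M_{2k+2}$ is $\tfrac{k^2}{2}\bigl[\nu^2+|d|^2-(1-\nu)^2-(1-|d|^2)\bigr]=k^2(|d|^2-1+\nu)$. The coefficient of $M_{2k+1}$ is $k^2(1-\nu)+\alpha k\nu$. The coefficient of $M_{2k}$ is $-\tfrac{k^2}{2}+\tfrac{k(k+1)}{2}-\tfrac{\alpha N}{2}=\tfrac{k-\alpha N}{2}$. This is exactly \eqref{eq:closed}, obtained with no integration by parts, and it is the paper's argument.

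Your divergence identity for $V=d-(d\cdot y)y$ is correct, but it produces a different valid representation. Comparing it with the expansion above gives a linear relation among $M_{2k},M_{2k+1},M_{2k+2}$ with coefficients depending on $|d|$ and $\nu$. Representations of $h_d$ in these three moments are therefore not unique, and yours has different coefficients from the stated one, as you noticed. Returning to \eqref{eq:closed} would mean undoing the integration by parts, which requires precisely the expansion you set aside. Your proposed final step is not carried out. The $d=0$ test also cannot settle the coefficients: there the gradient term vanishes and the moment relation holds trivially, so every such representation gives the same value. The identity $D^{-2k-1}=D^{-2k-2}(1-\nu+d\cdot y)$ is the right seed, but it should be used pointwise, by squaring $d\cdot y=D-(1-\nu)$ inside $(d\cdot y)^2D^{-2k-2}$, not combined with further divergence relations. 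Your derivation of \eqref{eq:d-zero} from \eqref{eq:closed} is correct.
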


\begin{proof}
Put $U=1+d\cdot y-\nu$. Then
\[
\nabla W_{d,\nu}=-kc_dU^{-k-1}d,\qquad
Z_{d,\nu}=kc_d\nu U^{-k-1},
\]
and
\[
|\nabla W_{d,\nu}|^2-(y\cdot\nabla W_{d,\nu})^2
=k^2c_d^2U^{-2k-2}\bigl(|d|^2-(d\cdot y)^2\bigr).
\]
Insert these expressions into \eqref{eq:H}, substitute
$d\cdot y=U-(1-\nu)$, and use
\[
c_d^{p+1}=c_d^2k(k+1)(1-|d|^2),\qquad
\frac{k(k+1)}{p+1}=\frac{k^2}{2}.
\]
Collecting the three powers of $U$ gives \eqref{eq:closed}.
For $d=0$, $M_j=|B|(1-\nu)^{-j}$; simplification with
$\alpha=k-(N-1)/2$ gives \eqref{eq:d-zero}.
\end{proof}

\begin{proposition}[Unique sign threshold]\label{prop:threshold}
For every $|d|<1$, $h_d$ has exactly one zero
$\nu_*(d)$ in its full domain $(-\infty,1-|d|)$.
It belongs to $(0,1-|d|)$, and
\[
h_d(\nu)>0\ \text{for }\nu<\nu_*(d),\qquad
h_d(\nu)<0\ \text{for }\nu_*(d)<\nu<1-|d|.
\]
The map $d\mapsto\nu_*(d)$ is smooth on the open unit ball and invariant
under rotations. At the origin,
\[
\nu_*(0)=1-\frac{2k}{N}.
\]
\end{proposition}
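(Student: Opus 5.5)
The plan is to get the sign structure of $h_d$ from the Hamza--Zaag dissipation identity \eqref{eq:lyapunov}, applied to the explicit time-shifted solutions, and not from direct analysis of the moment formula \eqref{eq:closed}. Fix $|d|<1$ and $\nu_0<1-|d|$, and follow the curve $\nu(s)=\nu_0e^s$ on an interval where $\nu(s)<1-|d|$. By the remark after \eqref{eq:shifted}, $s\mapsto\mathbf W_{d,\nu(s)}$ is an exact solution of \eqref{eq:sse}. Since $1+d\cdot y-\nu\geq 1-|d|-\nu>0$ on $\overline B$, it is smooth on a neighbourhood of $\overline B$, so \eqref{eq:lyapunov} applies. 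Because $W_{d,\nu}>0$, the bulk term $\mu\frac{p-1}{p+1}\int_B|W_{d,\nu}|^{p+1}$ is strictly positive. Using $\frac{d}{ds}h_d(\nu(s))=\nu h_d'(\nu)$, the identity gives the strict differential inequality
\[
\nu h_d'(\nu)-2\mu h_d(\nu)<0\qquad\text{for all }\nu<1-|d|.
\]
The case $\nu=0$ is the stationary solution. Taking $\nu=0$ gives $h_d(0)>0$. For $\nu\neq0$ it says that $g(\nu)=|\nu|^{-2\mu}h_d(\nu)$ is strictly decreasing in $\nu$ on $(0,1-|d|)$ and strictly increasing on $(-\infty,0)$, i.e.\ decreasing in $|\nu|$ there. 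In particular, at any zero $\nu$ of $h_d$ we have $\nu h_d'(\nu)<0$, so every zero is transversal.

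On $(0,1-|d|)$, strict monotonicity of $g$ gives at most one zero, and $h_d>0$ to its left. For existence I will show $h_d(\nu)<0$ as $\nu\uparrow1-|d|$. For $d=0$ this is read off from \eqref{eq:d-zero}: the factor $N-2k-N\nu$ vanishes exactly at $\nu=1-2k/N\in(0,1)$, which also gives $\nu_*(0)$. For $d\neq0$, put $\epsilon=1-|d|-\nu\downarrow0$. The integrand of $M_j$ concentrates near the boundary point $-d/|d|$, and a local computation (a paraboloid-type cap) gives $M_j\sim C_j\epsilon^{(N+1)/2-j}$ whenever $j>(N+1)/2$. This holds for $j=2k+1$ and $j=2k+2$, since $2k>N-2$. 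Hence $M_{2k+1}/M_{2k+2}=O(\epsilon)$ and $M_{2k}/M_{2k+2}=O(\epsilon^2)$ (or bounded, in any case $o(1)$). The $M_{2k+2}$ term then dominates in \eqref{eq:closed}, and its coefficient tends to $k^2(|d|^2-|d|)<0$. Continuity now gives exactly one zero $\nu_*(d)\in(0,1-|d|)$ with the stated signs.

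On $(-\infty,0)$ I must show $h_d>0$. Since $g$ decreases in $|\nu|$, it suffices to show $g(\nu)\to0^+$ as $\nu\to-\infty$: a function decreasing to the limit $0$ stays positive. As $\nu\to-\infty$ we have $M_j=|B||\nu|^{-j}(1+O(|\nu|^{-1}))$. Collecting the terms of order $|\nu|^{-2k}$ in \eqref{eq:closed} gives the coefficient $-k^2+(k^2-\alpha k)+\frac{k-\alpha N}{2}=\mu k+\frac k2+\frac{\mu N}{2}>0$. Thus $h_d\sim c|\nu|^{-2k}$ with $c>0$, and $g\to0^+$ because $2k+2\mu>0$. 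This is the step where the bookkeeping of cancellations must be checked carefully.

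Smoothness of $d\mapsto\nu_*(d)$ follows from the implicit function theorem. The function $h_d(\nu)$ is jointly smooth on $\{|d|<1,\ \nu<1-|d|\}$ by differentiation under the integral, and $\partial_\nu h_d(\nu_*)<0$ by transversality. Rotation invariance follows from $h_{Rd}(\nu)=h_d(\nu)$, which holds by the change of variables $y\mapsto Ry$ in $M_j$. I expect the main obstacle to be the endpoint asymptotics of $M_j$ as $\nu\uparrow1-|d|$, that is, justifying the precise blow-up exponents near the tangency point. If that computation becomes delicate, a fallback is to bound $M_{2k+1}\leq C\epsilon\, M_{2k+2}$ directly, using $U\leq\epsilon+C|y+d/|d||$ only on the region where the $M_{2k+2}$ mass concentrates.
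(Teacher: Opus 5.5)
Your proposal is correct and follows essentially the paper's proof: the dissipation identity along $\nu(s)=\nu_0e^s$ gives $\nu h_d'-2\mu h_d=-\D_d<0$, hence $h_d(0)>0$ and strict monotonicity of $|\nu|^{-2\mu}h_d$ on each branch. The moment asymptotics then give $h_d\to-\infty$ as $\nu\uparrow1-|d|$, the decay $h_d=O(|\nu|^{-2k})$ handles $\nu\to-\infty$, and the implicit function theorem with transversality gives smoothness. One harmless slip: the $M_{2k+2}$ term contributes only at order $|\nu|^{-2k-1}$, so the $-k^2$ in your $|\nu|^{-2k}$ coefficient should be dropped (the correct coefficient is $c_d^2|B|N(N-1)/4$, consistent with \eqref{eq:d-zero}); as the paper does, you only need $|\nu|^{-2\mu}h_d\to0$, not the sign of that coefficient.
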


\begin{proof}
Define the strictly positive quantity
\begin{equation}\label{eq:dissipation}
\D_d(\nu)=
\int_{\partial B}(Z_{d,\nu}-\mu W_{d,\nu})^2\,dS
+\mu\frac{p-1}{p+1}\int_BW_{d,\nu}^{p+1}\,dy.
\end{equation}
Applying \eqref{eq:lyapunov} to the exact trajectory
$\nu_s=\nu$ gives
\begin{equation}\label{eq:scalar-ode}
\nu h_d'(\nu)-2\mu h_d(\nu)=-\D_d(\nu).
\end{equation}
At the stationary point, the same identity yields
\begin{equation}\label{eq:stationary-positive}
h_d(0)=\frac{\D_d(0)}{2\mu}>0.
\end{equation}
For $\nu>0$,
\begin{equation}\label{eq:monotone}
\frac{d}{d\nu}\bigl(\nu^{-2\mu}h_d(\nu)\bigr)
=-\nu^{-2\mu-1}\D_d(\nu)<0.
\end{equation}

We next check the negative sign at the upper endpoint. If
$\rho=|d|\in(0,1)$, rotation and integration in the transverse
variables give
\begin{equation}\label{eq:moment-integral}
M_j=K_a\int_{-1}^1(1-t^2)^a(1+\rho t-\nu)^{-j}\,dt,
\qquad K_a=\frac{\pi^a}{\Gamma(a+1)}.
\end{equation}
As $\epsilon=1-\rho-\nu\downarrow0$, the integral near $t=-1$
has order $\epsilon^{a+1-j}$ for $j>a+1$, order
$\log(1/\epsilon)$ for $j=a+1$, and stays bounded for $j<a+1$.
Since $2k>a$ in \eqref{eq:nlw}, both $M_{2k+1}$ and
$M_{2k+2}$ diverge, and
\[
\frac{M_{2k+1}}{M_{2k+2}}\to0,\qquad
\frac{M_{2k}}{M_{2k+2}}\to0.
\]
The coefficient of $M_{2k+2}$ in \eqref{eq:closed} tends to
$k^2\rho(\rho-1)<0$. Hence $h_d(\nu)\to-\infty$.
For $d=0$ the same conclusion follows from \eqref{eq:d-zero}.
Together with \eqref{eq:stationary-positive} and
\eqref{eq:monotone}, this proves existence, uniqueness and the sign
description on the positive branch.

For the negative branch, put $g(x)=h_d(-x)$, $x>0$.
Equation \eqref{eq:scalar-ode} gives
\[
\frac{d}{dx}\bigl(x^{-2\mu}g(x)\bigr)
=-x^{-2\mu-1}\D_d(-x)<0.
\]
At fixed $d$, the explicit profiles and their velocity satisfy
$W_{d,-x}=O(x^{-k})$ and $Z_{d,-x}=O(x^{-k})$ uniformly on
$\overline B$. Thus $g(x)=O(x^{-2k})$, and
$x^{-2\mu}g(x)\to0$ as $x\to\infty$. A strictly decreasing function
with this limit is strictly positive. Hence $h_d(\nu)>0$ for $\nu<0$.

Finally, at the zero,
\[
h_d'(\nu_*(d))=-\frac{\D_d(\nu_*(d))}{\nu_*(d)}<0.
\]
The function $(d,\nu)\mapsto h_d(\nu)$ is smooth in its open domain,
because its denominator is bounded away from zero locally.
The implicit function theorem and rotation invariance give the
remaining assertions.
\end{proof}

\subsection{Asymptotics of the threshold}

Set $\beta=2k-a>0$ and recall $q_\infty$ from \eqref{eq:qm}.
For $0<\eta<1/2$, define
\begin{equation}\label{eq:remainder}
r_\beta(\eta)=
\begin{cases}
\eta^\beta,&0<\beta<1,\\
\eta\log(1/\eta),&\beta=1,\\
\eta,&\beta>1.
\end{cases}
\end{equation}

\begin{lemma}[Uniform moment estimates]\label{lem:moments}
Let $\eta=1-|d|$ and $\nu=q\eta$, with $q$ in a fixed compact
$K\subset[0,1)$. Uniformly on $K$,
\begin{align}
M_{2k+1}
&=K_a\eta^{a-2k}
\left[
2^a(1-q)^{a-2k}\mathrm B(a+1,2k-a)
+O_K(r_\beta(\eta))
\right], \label{eq:moment-one}\\
M_{2k+2}
&=K_a\eta^{a-2k-1}
\left[
2^a(1-q)^{a-2k-1}\mathrm B(a+1,2k+1-a)
+O_K(\eta)
\right], \label{eq:moment-two}\\
\eta^{-\mu}c_d^2M_{2k}
&=O_K(r_\beta(\eta)). \label{eq:moment-zero}
\end{align}
Here $\mathrm B$ denotes the beta function.
\end{lemma}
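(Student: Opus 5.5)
The starting point is the one-dimensional reduction \eqref{eq:moment-integral}:
\[
M_j=K_a\int_{-1}^1(1-t^2)^a(1+\rho t-\nu)^{-j}\,dt,\qquad \rho=1-\eta,\quad \nu=q\eta.
\]
We substitute $t=-1+\eta\,x$, so that
\[
1+\rho t-\nu=\eta\bigl(1-q+\rho x\bigr)
\]
exactly, since $1-\rho=\eta$. Also
\[
1-t^2=\eta x(2-\eta x).
\]
Hence
\[
M_j=K_a\eta^{a+1-j}\int_0^{2/\eta}x^a(2-\eta x)^a\,(1-q+\rho x)^{-j}\,dx .
\]
For $j=2k+1$ and $j=2k+2$ we have $j>a+1$, because $2k>a$. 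The formal limit of the integral is therefore
\[
2^a\int_0^\infty x^a(1-q+x)^{-j}\,dx=2^a(1-q)^{a+1-j}\,\mathrm B(a+1,j-a-1).
\]
This is obtained by scaling $x=(1-q)z$ and using $\int_0^\infty z^a(1+z)^{-j}dz=\mathrm B(a+1,j-a-1)$. These are exactly the leading terms in \eqref{eq:moment-one} and \eqref{eq:moment-two}. The constants are uniform on $K$ because $1-q$ is bounded below there.

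The remainder comes from three errors. The first is replacing $(2-\eta x)^a$ by $2^a$. The second is replacing $\rho x$ by $x$. The third is extending the range of integration to infinity.

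For the first two we use that $|(2-\eta x)^a-2^a|\le C\eta x$ on $[0,2/\eta]$. We also use
\[
|(1-q+\rho x)^{-j}-(1-q+x)^{-j}|\le C\eta\,x\,(1-q+\rho x)^{-j-1}\cdot x \lesssim \eta\,x(1-q+x/2)^{-j},
\]
where we assume $\eta<1/2$, so that $\rho x\ge x/2$. Either way the error is bounded by
\[
C\eta\int_0^{2/\eta}x^{a+1}(1+x)^{-j}\,dx .
\]
The integrand behaves like $x^{a+1-j}$. For $j=2k+2$ the exponent is $a-1-2k=-1-\beta<-1$, so the integral converges and the error is $O(\eta)$. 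For $j=2k+1$ the exponent is $-\beta$. The integral up to $2/\eta$ is then $O(1)$ if $\beta>1$, $O(\log(1/\eta))$ if $\beta=1$, and $O(\eta^{\beta-1})$ if $\beta<1$. Multiplying by $\eta$ gives exactly $r_\beta(\eta)$. The tail $\int_{2/\eta}^\infty x^{a-j}dx=O(\eta^{j-a-1})$ is $O(\eta^{\beta})$ for $j=2k+1$ and $O(\eta^{\beta+1})$ for $j=2k+2$. Both are dominated by the stated remainders.

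For \eqref{eq:moment-zero} we use $c_d^2=\kappa_0^2(1-\rho^2)^k\le C\eta^k$, so
\[
\eta^{-\mu}c_d^2\le C\eta^{k-\mu}=C\eta^{2k-a}=C\eta^\beta .
\]
The same substitution gives
\[
M_{2k}=K_a\eta^{a+1-2k}\int_0^{2/\eta}x^a(2-\eta x)^a(1-q+\rho x)^{-2k}\,dx,
\]
and this splits into three cases.

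If $2k>a+1$, i.e.\ $\beta>1$, the integral converges and $M_{2k}=O(\eta^{1-\beta})$. The product is then $O(\eta)$.

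If $2k=a+1$, the integral is $O(\log(1/\eta))$, giving $O(\eta\log(1/\eta))$.

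If $2k<a+1$, we bound $M_{2k}$ directly. Since $1+\rho t-\nu\ge 1-q-\rho+\rho(1+t)$ and the singularity is integrable, $M_{2k}=O(1)$. The product is then $O(\eta^\beta)$.

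In all three cases the bound is $O_K(r_\beta(\eta))$, which proves \eqref{eq:moment-zero}.

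The only delicate point is the borderline bookkeeping for $j=2k+1$, which produces the logarithm when $\beta=1$. Everything else is dominated convergence with explicit rates.
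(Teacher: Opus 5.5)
Your proof is correct and follows the paper's argument essentially step for step. It uses the same substitution $t=-1+\eta x$ and the same three error sources: replacing $(2-\eta x)^a$ by $2^a$, replacing $\rho$ by $1$ in the denominator, and the tail beyond $2/\eta$. It also uses the same three-case treatment of $M_{2k}$ according to whether $\beta$ is larger than, equal to, or smaller than $1$.

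There are two harmless slips.
\begin{itemize}
\item The mean value theorem gives $|(1-q+\rho x)^{-j}-(1-q+x)^{-j}|\le j\eta x(1-q+\rho x)^{-j-1}$, without your extra factor $x$. Your final bound $\lesssim_K\eta x(1+x)^{-j}$ is unaffected.
\item In the case $2k<a+1$, the useful identity is $1+\rho t-\nu=(1-q)\eta+\rho(1+t)\ge\rho(1+t)$. Your constant $1-q-\rho$ can be negative, so it does not give this lower bound.
\end{itemize}
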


\begin{proof}
In \eqref{eq:moment-integral}, use $t=-1+\eta x$.
Then
\begin{equation}\label{eq:scaled-moment}
M_j=K_a\eta^{a+1-j}I_j(\eta,q),\qquad
I_j(\eta,q)=\int_0^{2/\eta}
\frac{x^a(2-\eta x)^a}
{(1-q+(1-\eta)x)^j}\,dx.
\end{equation}
For $j>a+1$, the limiting integral is
\[
I_j(0,q)
=2^a(1-q)^{a+1-j}\mathrm B(a+1,j-a-1).
\]
The denominators are uniformly comparable to $1+x$ for $q\in K$.
Replacing $1-\eta$ by $1$ in the denominator gives a relative
$O_K(\eta)$ error. Since $a\geq1$,
\[
|(2-\eta x)^a-2^a|\leq C_a\min\{\eta x,1\}
\quad(0\leq x\leq2/\eta).
\]
For $j=2k+1$, the tail of the limiting integrand is
$O_K(x^{-1-\beta})$. The remaining error is bounded by
\[
C_K\left[
\eta+\eta\int_1^{2/\eta}x^{-\beta}\,dx
+\int_{2/\eta}^{\infty}x^{-1-\beta}\,dx
\right]
\leq C_Kr_\beta(\eta).
\]
For $j=2k+2$, the tail is $O_K(x^{-2-\beta})$; the same calculation
gives an $O_K(\eta)$ error. This proves the first two assertions.

For the last one, the integral defining $M_{2k}$ is bounded if
$2k<a+1$, is $O_K(\log(1/\eta))$ if $2k=a+1$, and is
$O_K(\eta^{a+1-2k})$ if $2k>a+1$.
Using $c_d^2\asymp\eta^k$ and $k-\mu=\beta$ yields
\eqref{eq:moment-zero}.
\end{proof}

\begin{proposition}[Asymptotic sign and location of the zero]
\label{prop:asymptotic}
Uniformly for $q$ in compact subsets of $[0,1)$,
\begin{equation}\label{eq:asymptotic}
\eta^{-\mu}h_d(q\eta)=G(q)+O_K(r_\beta(\eta)),
\end{equation}
where
\begin{align}
G(q)&=C_{N,k}(1-q)^{a-2k}F(q),\label{eq:G}\\
F(q)&=1-\frac{(2-q)(2k-a)}{(2k+1)(1-q)},\label{eq:F}\\
C_{N,k}&=\kappa_0^2\,2^{k+a}k^2K_a\,
\mathrm B(a+1,2k-a)>0.\label{eq:C}
\end{align}
The unique zero of $F$ in $(0,1)$ is $q_\infty$, and $F<0$ on
$(q_\infty,1)$. Moreover,
\begin{equation}\label{eq:root-asymptotic}
\nu_*(d)=q_\infty\eta+O\bigl(\eta r_\beta(\eta)\bigr),
\qquad \eta=1-|d|\downarrow0.
\end{equation}
\end{proposition}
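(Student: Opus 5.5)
We substitute the moment asymptotics of Lemma~\ref{lem:moments} into the closed formula \eqref{eq:closed} and track which terms survive after multiplying by $\eta^{-\mu}$.

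\textbf{Prefactor and coefficients.} First record
\[
c_d^2=\kappa_0^2(1-|d|^2)^k=\kappa_0^2\eta^k(2-\eta)^k=\kappa_0^2 2^k\eta^k(1+O(\eta)).
\]
With $\nu=q\eta$, the coefficient of $M_{2k+2}$ is $k^2(|d|^2-1+q\eta)=k^2\eta(q-2+\eta)$. The coefficient of $M_{2k+1}$ is $k^2-(k^2-\alpha k)q\eta=k^2+O(\eta)$.

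\textbf{Orders of magnitude.} By \eqref{eq:moment-one}, $\eta^{-\mu}c_d^2M_{2k+1}$ has order $\eta^{k-\mu+a-2k}=\eta^0$, since $k-\mu=\beta=2k-a$. By \eqref{eq:moment-two}, $\eta^{-\mu}c_d^2\,\eta M_{2k+2}$ is also of order one. The $M_{2k}$ term is $O(r_\beta)$ by \eqref{eq:moment-zero}.

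\textbf{Leading term.} Collecting leading terms gives
\[
\eta^{-\mu}h_d=\kappa_0^2 2^{k+a}k^2K_a\Big[(1-q)^{a-2k}\mathrm B(a+1,2k-a)+(q-2)(1-q)^{a-2k-1}\mathrm B(a+1,2k+1-a)\Big]+O(r_\beta).
\]
The errors combine as follows: relative $O(\eta)$ and $O(r_\beta)$ terms, and $r_\beta\ge c\,\eta$ for small $\eta$ in all three cases of \eqref{eq:remainder}. The beta identity
\[
\mathrm B(x,y+1)=\mathrm B(x,y)\,\frac{y}{x+y},\qquad x=a+1,\ y=2k-a,
\]
gives the ratio $(2k-a)/(2k+1)$. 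Factoring out $\mathrm B(a+1,2k-a)(1-q)^{a-2k}$ then produces exactly $G(q)=C_{N,k}(1-q)^{a-2k}F(q)$. The constant $C_{N,k}$ is positive because $\beta>0$ makes the beta function finite and positive. All estimates are uniform on $K$ since Lemma~\ref{lem:moments} is.

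\textbf{Zero of $F$.} The equation $F(q)=0$ is linear:
\[
(2k+1)(1-q)=(2-q)(2k-a),
\]
so
\[
q=\frac{2k+1-2(2k-a)}{2k+1-(2k-a)}=\frac{2a+1-2k}{a+1}=\frac{N-2k}{(N+1)/2},
\]
which is $q_\infty$. It lies in $(0,1)$ by \eqref{eq:qm}. Next, $(1-q)F(q)=(2k+1)(1-q)-(2k-a)(2-q)$ is affine with slope $-(a+1)<0$. Its value at $q=0$ is $N-2k>0$ (subcriticality), so it is positive before $q_\infty$ and negative after. This gives $F<0$ on $(q_\infty,1)$, with a simple zero.

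\textbf{Location of the root (main step).} This is the one nontrivial step. Fix a compact $K=[q_\infty-\delta,q_\infty+\delta]\subset(0,1)$. Since $G'(q_\infty)\neq0$ (simple zero, nonvanishing prefactor), the error $O_K(r_\beta)$ forces $\eta^{-\mu}h_d(q\eta)$ to have opposite strict signs at $q=q_\infty\pm Cr_\beta(\eta)$ for small $\eta$. Hence a zero exists in this window. By the uniqueness of the zero in Proposition~\ref{prop:threshold}, it must be $\nu_*(d)/\eta$. Therefore
\[
\nu_*(d)=q_\infty\eta+O(\eta r_\beta(\eta)).
\]
We do not need a derivative of the error term; this comparison uses only two point evaluations of sign, so uniform estimates suffice.
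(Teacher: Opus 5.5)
Your proof is correct and follows the paper's argument: you substitute Lemma~\ref{lem:moments} into \eqref{eq:closed}, use the beta ratio $(2k-a)/(2k+1)$, and locate the root by a sign change plus the uniqueness from Proposition~\ref{prop:threshold}. Your shrinking window $q_\infty\pm Cr_\beta(\eta)$ merges the paper's two steps (first $\nu_*/\eta\to q_\infty$ via fixed windows, then the mean value theorem with $G'(q_\infty)<0$); one small slip is that $N-2k>0$ comes from the superconformal condition $k<a$, not from subcriticality, which instead gives $\beta=2k-a>0$.
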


\begin{proof}
The coefficients in \eqref{eq:closed} satisfy
\[
|d|^2-1+q\eta=\eta(q-2)+\eta^2,\qquad
k^2(1-q\eta)+\alpha kq\eta=k^2+O_K(\eta),
\]
and $c_d^2=\kappa_0^22^k\eta^k(1+O(\eta))$.
Insert Lemma~\ref{lem:moments} in \eqref{eq:closed}. The first two
terms both have order $\eta^\mu$, while the third is
$O_K(\eta^\mu r_\beta(\eta))$.
The beta identity
\[
\frac{\mathrm B(a+1,2k+1-a)}{\mathrm B(a+1,2k-a)}
=\frac{2k-a}{2k+1}
\]
gives \eqref{eq:asymptotic}--\eqref{eq:C}.

The function $F$ is strictly decreasing, since
$(2-q)/(1-q)=1+(1-q)^{-1}$. Also,
\[
F(0)=\frac{N-2k}{2k+1}>0,\qquad
F(q)\longrightarrow-\infty\quad(q\uparrow1).
\]
Solving $F(q)=0$ gives
$q_\infty=2(N-2k)/(N+1)$.
It lies in $(0,1)$ because $2k<N$ and $2k>a$.

For any small fixed $\epsilon>0$, \eqref{eq:asymptotic} gives opposite
signs at $(q_\infty-\epsilon)\eta$ and
$(q_\infty+\epsilon)\eta$. The unique zero from
Proposition~\ref{prop:threshold} lies between them. Thus
$\nu_*(d)/\eta\to q_\infty$.
Choose a fixed compact neighbourhood of $q_\infty$ contained in $(0,1)$.
For small $\eta$, the zero lies in this neighbourhood, and
\eqref{eq:asymptotic} gives
\[
\left|G\left(\frac{\nu_*(d)}{\eta}\right)\right|
\leq C r_\beta(\eta).
\]
Since $G'(q_\infty)<0$, the mean value theorem proves
\eqref{eq:root-asymptotic}.
\end{proof}

\subsection{Dependence on the amplitude}

Fix an amplitude lower bound $m>0$, with $q_m$ as in \eqref{eq:qm}.
Define
\begin{equation}\label{eq:PQ}
P_d(\nu)=\frac1{p+1}\int_BW_{d,\nu}^{p+1}\,dy,\qquad
Q_d(\nu)=\mathcal Q[\mathbf W_{d,\nu}]=h_d(\nu)+P_d(\nu).
\end{equation}
Homogeneity gives the exact identity
\begin{equation}\label{eq:amplitude-identity}
H[\lambda\mathbf W_{d,\nu}]
=\lambda^2Q_d(\nu)-\lambda^{p+1}P_d(\nu)
\qquad(\lambda>0).
\end{equation}

\begin{proposition}[Amplitude-dependent sign margin]\label{prop:amplitude}
Let $m>0$, and let $K$ be a nonempty compact subset of $(q_m,1)$,
where $q_m$ is given by \eqref{eq:qm}.
There are $b_0,\eta_0>0$, depending only on $N,p,m,K$, such that
\begin{equation}\label{eq:amplitude-negative}
H[\lambda\mathbf W_{d,q\eta}]
\leq-2b_0\lambda^{p+1}\eta^\mu
\end{equation}
whenever $0<\eta<\eta_0$, $q\in K$, and $\lambda\geq m$.
\end{proposition}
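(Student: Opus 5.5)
The plan is to factor out $\lambda^{p+1}$ in the exact identity \eqref{eq:amplitude-identity} and reduce everything to the leading asymptotics of $Q_d$ and $P_d$ at the scale $\eta^\mu$. By \eqref{eq:Q-equivalence}, $Q_d(\nu)>0$. Hence for $\lambda\geq m$,
\[
H[\lambda\mathbf W_{d,q\eta}]=\lambda^{p+1}\bigl(\lambda^{1-p}Q_d-P_d\bigr)
\leq\lambda^{p+1}\bigl(m^{1-p}Q_d(q\eta)-P_d(q\eta)\bigr).
\]
It therefore suffices to show
\[
\eta^{-\mu}\bigl(m^{1-p}Q_d(q\eta)-P_d(q\eta)\bigr)\leq-2b_0
\]
uniformly for $q\in K$ and $\eta<\eta_0$. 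Since $Q_d=h_d+P_d$, this quantity is $m^{1-p}\bigl[\eta^{-\mu}h_d+(1-m^{p-1})\eta^{-\mu}P_d\bigr]$. The term $\eta^{-\mu}h_d$ is already given by Proposition~\ref{prop:asymptotic}.

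Next I compute $P_d$ exactly. Since $k(p+1)=2k+2$ and $c_d^{p+1}=c_d^2k(k+1)(1-|d|^2)$, we get
\[
P_d(\nu)=\tfrac{k^2}{2}c_d^2(1-|d|^2)M_{2k+2}(d,\nu).
\]
Using $1-|d|^2=2\eta+O(\eta^2)$, $c_d^2=\kappa_0^22^k\eta^k(1+O(\eta))$ and \eqref{eq:moment-two}, the powers combine to $\eta^{k+1+a-2k-1}=\eta^\mu$. The beta identity from the proof of Proposition~\ref{prop:asymptotic} then gives, uniformly on $K$,
\[
\eta^{-\mu}P_d(q\eta)=C_{N,k}\frac{\beta}{2k+1}(1-q)^{a-2k-1}+O_K(\eta),\qquad \beta=2k-a.
\]
Combined with \eqref{eq:asymptotic}, the bracket equals
\[
C_{N,k}(1-q)^{a-2k-1}\Phi(q)+O_K(r_\beta(\eta)),
\qquad
\Phi(q)=(1-q)F(q)+(1-m^{p-1})\frac{\beta}{2k+1}.
\]

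The key step, and the only real content, is the algebraic sign of $\Phi$. Expanding \eqref{eq:F} gives $(1-q)F(q)=(1-q)-(2-q)\beta/(2k+1)$, and so
\[
\Phi(q)=\frac{(1-q)(2k+1-\beta)-m^{p-1}\beta}{2k+1}.
\]
Here $2k+1-\beta=a+1=(N+1)/2$. A direct check gives $1-q_\infty=(4k+1-N)/(N+1)=\beta/(a+1)$. Hence $\Phi(q)<0$ exactly when $1-q<m^{p-1}(1-q_\infty)$, that is, when $q>1-m^{p-1}(1-q_\infty)$. This holds for all $q\in(q_m,1)$ by \eqref{eq:qm}; when $q_m=0$ it holds for all $q\in[0,1)$.

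To finish, on the compact set $K\subset(q_m,1)$ the continuous function $C_{N,k}(1-q)^{a-2k-1}\Phi(q)$ has a strictly negative maximum, say $-8b_0'$. Choose $\eta_0$ so that the $O_K(r_\beta(\eta))$ remainder is at most $4b_0'$. Then
\[
\eta^{-\mu}\bigl(m^{1-p}Q_d-P_d\bigr)\leq-4b_0'm^{1-p},
\]
and setting $b_0=2b_0'm^{1-p}$ gives \eqref{eq:amplitude-negative}. The only care needed is that all remainders in Lemma~\ref{lem:moments} and Proposition~\ref{prop:asymptotic} are uniform on $K$, which those statements provide.
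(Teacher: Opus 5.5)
Your proposal is correct and follows the paper's own argument: the same exact identity \eqref{eq:amplitude-identity}, the same $\eta^{\mu}$-scale asymptotics of $P_d$ and $Q_d=h_d+P_d$ from Lemma~\ref{lem:moments}, Proposition~\ref{prop:asymptotic} and the beta identity, and the same sign condition $1-q<m^{p-1}(1-q_\infty)$ via $\beta/(a+1)=1-q_\infty$. The only cosmetic difference is that you first use $Q_d>0$ to reduce to $\lambda=m$, whereas the paper keeps the leading term $A(q)\bigl[(a+1)\lambda^{1-p}-\beta/(1-q)\bigr]$ and bounds it using $\lambda^{1-p}\leq m^{1-p}$; your remainder constant also depends on $m$, which the statement allows.
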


\begin{proof}
Put $A(q)=C_{N,k}(1-q)^{-\beta}/(2k+1)$.
From \eqref{eq:moment-two} and
$k(k+1)/(p+1)=k^2/2$, we obtain
\begin{align}
\eta^{-\mu}P_d(q\eta)
&=A(q)\frac{\beta}{1-q}+O_K(r_\beta(\eta)),\label{eq:P-limit}\\
\eta^{-\mu}Q_d(q\eta)
&=A(q)(a+1)+O_K(r_\beta(\eta)).\label{eq:Q-limit}
\end{align}
Indeed,
$P_d=(k^2/2)c_d^2(1-|d|^2)M_{2k+2}$,
so the first line follows from the beta identity used above.
Adding it to \eqref{eq:asymptotic} gives the second line, because
$2k+1-\beta=a+1$.

Dividing \eqref{eq:amplitude-identity} by
$\lambda^{p+1}\eta^\mu$ gives, uniformly for $\lambda\geq m$,
\begin{equation}\label{eq:amplitude-asymptotic}
\frac{H[\lambda\mathbf W_{d,q\eta}]}{\lambda^{p+1}\eta^\mu}
=A(q)\left[(a+1)\lambda^{1-p}-\frac{\beta}{1-q}\right]
+O_{m,K}(r_\beta(\eta)).
\end{equation}
Here the remainder is uniform even for unbounded $\lambda$, since
$\lambda^{1-p}\leq m^{1-p}$.
The identity
\begin{equation}\label{eq:beta-ratio}
\frac{\beta}{a+1}=1-q_\infty
\end{equation}
shows that the leading expression is strictly negative, uniformly
on $K$ and $\lambda\geq m$: the condition is
$1-q<m^{p-1}(1-q_\infty)$.
Choose a fixed negative margin and then reduce $\eta_0$ to absorb
the remainder. This proves \eqref{eq:amplitude-negative}.
\end{proof}

\section{Perturbations in the degenerate energy norm}\label{sec:perturbation}

\begin{lemma}[A Sobolev inequality by spherical lifting]\label{lem:sobolev}
Set
\[
r=\frac{2(N+1)}{N-1},\qquad r'=\frac{2(N+1)}{N+3}.
\]
For every $f\in H^1(B)$,
\begin{equation}\label{eq:sobolev}
\|f\|_{L^r(B)}\leq C_N\|(f,0)\|_{\X_1}.
\end{equation}
\end{lemma}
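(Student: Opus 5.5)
The plan is to lift $f$ to a function on a hemisphere of $S^N\subset\R^{N+1}$, or equivalently onto the unit ball viewed as a piece of hyperbolic-type geometry, in which the degenerate form $|\nabla f|^2-(y\cdot\nabla f)^2$ becomes the full Dirichlet form. The numerology supports this. The exponent $r=2(N+1)/(N-1)$ is exactly the Sobolev exponent $2n/(n-2)$ in dimension $n=N+1$. Moreover, $|\nabla f|^2-(y\cdot\nabla f)^2=|\nabla f|^2(1-|y|^2)$ for radial-in-direction gradients, and the matrix $I-y\otimes y$ is the inverse metric of the upper hemisphere $\{(y,z):z=\sqrt{1-|y|^2}\}$ written in the vertical projection chart $y$. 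Concretely, set $F(y,z)=f(y)$ on the hemisphere $S^N_+$. The pulled-back metric is $g=I+\frac{y\otimes y}{1-|y|^2}$, with $g^{-1}=I-y\otimes y$ and volume element $dV=(1-|y|^2)^{-1/2}dy$.

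The key steps, in order, are as follows. First, from $g^{-1}=I-y\otimes y$ we get $|\nabla_{S}F|^2=|\nabla f|^2-(y\cdot\nabla f)^2$. Second, the extra weight $(1-|y|^2)^{-1/2}$ is a nuisance, so instead of lifting to the sphere I will lift to the $(N+1)$-dimensional solid region $\Omega=\{(y,z):|y|<1,\ 0<z<\sqrt{1-|y|^2}\}$, or more simply extend $F$ to be constant along suitable curves, so that the $(N+1)$-dimensional Sobolev inequality applies. The cleanest version takes $G(y,z)=f(y/\sqrt{1-z^2}\cdot\theta(z))$, that is, a function on the unit ball of $\R^{N+1}$ that is constant on level sets adapted so that its full gradient integrates to the degenerate form. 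Concretely, I will use $G(x',x_{N+1})=f(x')$ on the domain $\{|x'|^2+x_{N+1}^2<1\}$. Then $\int G^r\,dx=\int_B|f|^r\,2\sqrt{1-|y|^2}\,dy$ and $\int|\nabla G|^2=\int_B|\nabla f|^2\,2\sqrt{1-|y|^2}\,dy$. This introduces the weight $\sqrt{1-|y|^2}$ on both sides, which is favorable on the gradient side but unfavorable on the $L^r$ side.

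To fix the weights, I will compose with the radial reparametrization $y=\phi(\rho)\omega$ and choose $\phi$ so that the weights match. In general I expect the correct map to be the gnomonic/Beltrami-type projection, under which $I-y\otimes y$ corresponds to a conformally flat metric. Then conformal covariance of the conformal Laplacian in dimension $N+1$, together with the Sobolev inequality on a hemisphere with the $L^2$ term, gives \eqref{eq:sobolev} with the $f^2$ term in $\X_1$ absorbing the lower-order terms. The lower-order term $\int f^2$ is needed because constants have zero degenerate gradient.

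I expect the main obstacle to be choosing the lift so that the volume weight on the $L^r$ side is bounded below while the Dirichlet side is bounded above by $\int_B[|\nabla f|^2-(y\cdot\nabla f)^2+f^2]dy$ with no singular weight near $|y|=1$. I would first try $G(y,z)=f(y)$ on the cylinder-like set $\{(y,z):|y|<1,\,|z|<1\}$ composed with the Lorentz-type map $(y,z)\mapsto y/\cosh$. If that fails, I would instead check by direct calculation, using $\partial_\rho f$ weighted by $1-\rho^2$ in polar coordinates, that a one-dimensional Hardy-type extension in the normal variable $\sqrt{1-\rho}$ converts the degenerate radial derivative $(1-\rho^2)|\partial_\rho f|^2$ into a full derivative in an extra variable, which is precisely what produces one additional dimension in the exponent.
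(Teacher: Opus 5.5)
There is a genuine gap. You correctly observe that $I-y\otimes y$ is the inverse of the hemisphere metric in the projection chart, and that $r$ is the Sobolev exponent in dimension $N+1$. But you never produce a lift that transfers both sides of \eqref{eq:sobolev} without weights, and the two concrete lifts you write down both fail.

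On $\mathbb S^N_+$ the volume is $(1-|y|^2)^{-1/2}dy$, so the Dirichlet integral of $F$ is $\int_B(|\nabla f|^2-(y\cdot\nabla f)^2)(1-|y|^2)^{-1/2}\,dy$, which $\|(f,0)\|_{\X_1}$ does not control. Even if the weight could be removed, the Sobolev inequality of an $N$-dimensional manifold would give the exponent $2N/(N-2)>r$. That is impossible here: the scaling of Remark~\ref{rem:strict-weakening} ($1+y_1\sim h^2$, $|y'|\sim h$) shows that $\|f\|_{L^q(B)}\leq C\|(f,0)\|_{\X_1}$ fails for every $q>r$.

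For the flat solid ball, $G(x',x_{N+1})=f(x')$ gives $\int|\nabla G|^2=\int_B 2\sqrt{1-|y|^2}\,|\nabla f|^2\,dy$. Contrary to what you say, this is not favorable on the gradient side. Its radial part $2\sqrt{1-\rho^2}\,|\partial_\rho f|^2$ exceeds the available $(1-\rho^2)|\partial_\rho f|^2$ by the unbounded factor $2(1-\rho^2)^{-1/2}$. The later suggestions (radial reparametrization, Beltrami projection, conformal covariance, a Lorentz-type map, a Hardy-type extension in $\sqrt{1-\rho}$) are not carried out, and none is shown to remove the weights.

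The missing idea is to add the extra dimension as a circle fibre of radius $\sqrt{1-|y|^2}$ over your hemisphere. Set $F(y,z)=f(y)$ on $\mathbb S^{N+1}\subset\R^N\times\R^2$, with coordinates $(y,\theta)\mapsto(y,\sqrt{1-|y|^2}\cos\theta,\sqrt{1-|y|^2}\sin\theta)$. The metric then has the following structure:
\begin{itemize}
\item the $y$-block is still $I+y\otimes y/(1-|y|^2)$, with inverse $I-y\otimes y$;
\item the $\theta$-block is $1-|y|^2$;
\item the determinant is exactly one.
\end{itemize}
Thus the fibre length $\sqrt{1-|y|^2}$ cancels precisely the weight $(1-|y|^2)^{-1/2}$ that blocked you. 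The fibre contributes nothing to $|\nabla F|^2$, because $F$ is independent of $\theta$. Hence $\int_{\mathbb S^{N+1}}|F|^r\,dS=2\pi\int_B|f|^r\,dy$ and $\int_{\mathbb S^{N+1}}(|\nabla F|^2+F^2)\,dS=2\pi\int_B(|\nabla f|^2-(y\cdot\nabla f)^2+f^2)\,dy$.

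The Sobolev inequality on the compact $(N+1)$-manifold $\mathbb S^{N+1}$, whose $L^2$ term handles the constants, then gives \eqref{eq:sobolev} for smooth $f$, and density extends it to $H^1(B)$. Your closing remark, about converting the degenerate radial derivative into a full derivative in a new variable, points in this direction. However, the normal variable has to be two-dimensional, $z\in\R^2$ with $|z|^2=1-|y|^2$, not a one-dimensional substitution in $\sqrt{1-\rho}$.
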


\begin{proof}
View $\mathbb S^{N+1}$ as the unit sphere in
$\R^N\times\R^2$. For $f$ smooth on $\overline B$, lift it to
$F(y,z)=f(y)$ on this sphere. Away from $z=0$, use the coordinates
\[
(y,\theta)\longmapsto
\left(y,\sqrt{1-|y|^2}\cos\theta,
          \sqrt{1-|y|^2}\sin\theta\right).
\]
The induced metric has $y$ block
$I+y\otimes y/(1-|y|^2)$ and $\theta$ block $1-|y|^2$.
Its determinant is one and its inverse $y$ block is $I-y\otimes y$.
Consequently,
\begin{align*}
\int_{\mathbb S^{N+1}}|F|^r\,dS
&=2\pi\int_B|f|^r\,dy,\\
\int_{\mathbb S^{N+1}}\bigl(|\nabla_{\mathbb S}F|^2+F^2\bigr)\,dS
&=2\pi\int_B\bigl(|\nabla f|^2-(y\cdot\nabla f)^2+f^2\bigr)\,dy.
\end{align*}
The omitted set $z=0$ has surface measure zero.
The Sobolev inequality in dimension $N+1$ on the sphere, obtained
from the Euclidean inequality in a finite atlas and a partition of
unity, has exponent $r$ and gives \eqref{eq:sobolev}.
Finally, smooth approximation in $H^1(B)$ converges in $\X_1$.
The inequality makes these approximants Cauchy in $L^r$; their
$L^2$ limit is $f$, which proves the assertion for $H^1(B)$.
\end{proof}

\begin{lemma}[An upper perturbation estimate]\label{lem:upper-perturbation}
Let $\mathbf S=(S_1,S_2)\in H^1(B)\times L^2(B)$ satisfy
$|S_1|^{p-1}S_1\in L^{r'}(B)$. For
$\mathbf h=(f,g)\in H^1(B)\times L^2(B)$ and
$D=\|\mathbf h\|_{\X_1}$,
\begin{equation}\label{eq:general-upper}
H[\mathbf S+\mathbf h]\leq H[\mathbf S]
+C\left[
\left(\|\mathbf S\|_{\X_1}
+\||S_1|^{p-1}S_1\|_{r'}\right)D+D^2\right],
\end{equation}
where $C$ depends only on $N,p$.
\end{lemma}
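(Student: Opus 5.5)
We split $H=\mathcal Q-\frac1{p+1}\int_B|f|^{p+1}$ and treat the two parts separately. The quadratic part is controlled by the boundedness of the bilinear form, and the nonlinear part by concavity of $-|x|^{p+1}$, which needs only the first-order term.

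\emph{Quadratic part.} Let $\mathcal B$ denote the symmetric bilinear form with $\mathcal Q[\mathbf h]=\mathcal B(\mathbf h,\mathbf h)$. Then
\[
\mathcal Q[\mathbf S+\mathbf h]=\mathcal Q[\mathbf S]+2\mathcal B(\mathbf S,\mathbf h)+\mathcal Q[\mathbf h].
\]
By \eqref{eq:Q-square}, $\mathcal B$ is given by the nonnegative quadratic form written there. Cauchy--Schwarz and \eqref{eq:Q-equivalence} then give
\[
|\mathcal B(\mathbf S,\mathbf h)|\le C_Q\|\mathbf S\|_{\X_1}D,
\qquad
\mathcal Q[\mathbf h]\le C_QD^2.
\]
These bounds are finite for $\mathbf S,\mathbf h\in H^1\times L^2$. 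So the quadratic part contributes at most $C(\|\mathbf S\|_{\X_1}D+D^2)$.

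\emph{Nonlinear part.} The function $\phi(x)=|x|^{p+1}/(p+1)$ is convex with $\phi'(x)=|x|^{p-1}x$. Hence, pointwise,
\[
\phi(S_1+f)\ge\phi(S_1)+|S_1|^{p-1}S_1f,
\]
that is,
\[
-\phi(S_1+f)\le-\phi(S_1)-|S_1|^{p-1}S_1f.
\]
All terms are integrable: $\phi(S_1)$ and $\phi(S_1+f)$ are integrable because $p+1<2N/(N-2)$ and the functions lie in $H^1(B)$. Integrating over $B$ and applying Hölder with exponents $r',r$ gives
\[
-\frac1{p+1}\int_B|S_1+f|^{p+1}\le-\frac1{p+1}\int_B|S_1|^{p+1}+\||S_1|^{p-1}S_1\|_{r'}\|f\|_{L^r(B)}.
\]
By Lemma~\ref{lem:sobolev}, $\|f\|_{L^r}\le C_N\|(f,0)\|_{\X_1}\le C_ND$. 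The last inequality holds since dropping $g$ only decreases the $\X_1$ norm.

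Adding the two parts yields \eqref{eq:general-upper}. The key point is to use only the one-sided convexity bound. A two-sided expansion of $\int|S_1+f|^{p+1}$ would require $f\in L^{p+1}$ controlled by $D$. That fails in general, since $p+1$ may exceed $r$ (the superconformal range has $p>p_c=r-1$). Convexity needs only the pairing of $f\in L^r$ with $|S_1|^{p-1}S_1\in L^{r'}$, which is exactly why the hypothesis on $S_1$ is imposed.

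The main subtlety I expect is justifying that each integral is finite, so that the inequalities can be integrated without $\infty-\infty$ issues. I would argue this through the Sobolev-subcritical embedding $H^1(B)\subset L^{p+1}(B)$, or first for smooth $f$ followed by a limiting argument.
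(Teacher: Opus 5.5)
Your proposal is correct and is essentially the paper's proof. You control the quadratic part through the bounded bilinear form behind \eqref{eq:Q-equivalence}, and the potential through the one-sided convexity inequality, H\"older in $L^{r'}\times L^r$ and Lemma~\ref{lem:sobolev}, with all potential integrals finite because $p<p_S$; the only slip is that, since $r-1=p_c$, one has $p+1>r$ for every superconformal $p$, not merely "may exceed".
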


\begin{proof}
By \eqref{eq:Q-equivalence}, the quadratic part satisfies
\[
\mathcal Q[\mathbf S+\mathbf h]-\mathcal Q[\mathbf S]
\leq C\bigl(\|\mathbf S\|_{\X_1}D+D^2\bigr).
\]
Convexity gives, for all real $S_1,f$,
\[
|S_1+f|^{p+1}\geq |S_1|^{p+1}
+(p+1)|S_1|^{p-1}S_1f.
\]
H\"older's inequality and Lemma~\ref{lem:sobolev} bound the absolute
value of its linear contribution by
$C\||S_1|^{p-1}S_1\|_{r'}D$.
Combining these estimates proves \eqref{eq:general-upper}.
All potential integrals are finite because $p<p_S$.
\end{proof}

\begin{lemma}[Sizes of the profiles]\label{lem:profile-size}
For a compact $K\subset[0,1)$ and $\nu=q\eta$,
\begin{equation}\label{eq:profile-size}
\|\mathbf W_{d,q\eta}\|_{\X_1}
\leq C_K\eta^{\mu/2},\qquad
\|W_{d,q\eta}\|_{L^{p+1}(B)}
\leq C_K\eta^{\mu/(p+1)}
\end{equation}
when $\eta$ is small. In addition, for each fixed $c\in(0,1)$,
\begin{equation}\label{eq:collar-size}
\|(W_d,0)\|_{\X_{1+c\eta}}\leq C_c\eta^{\mu/2},
\qquad
\|W_d\|_{L^{p+1}(B_{1+c\eta})}
\leq C_c\eta^{\mu/(p+1)}.
\end{equation}
Finally,
\begin{equation}\label{eq:H1-large}
\|\nabla W_d\|_{L^2(B)}^2\asymp\eta^{\mu-1}.
\end{equation}
\end{lemma}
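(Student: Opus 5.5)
The plan is to compute everything explicitly in the one-dimensional reduction \eqref{eq:moment-integral}, reusing the scaling $t=-1+\eta x$ of Lemma~\ref{lem:moments}.

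\emph{First bound in \eqref{eq:profile-size}.} By \eqref{eq:Q-equivalence} it suffices to bound $\mathcal Q[\mathbf W_{d,q\eta}]=Q_d(q\eta)$. By \eqref{eq:Q-limit}, $Q_d(q\eta)=O_K(\eta^\mu)$ uniformly for $q\in K$ and small $\eta$. Since $c_Q\|\mathbf W_{d,q\eta}\|_{\X_1}^2\le Q_d(q\eta)$, this gives $\|\mathbf W_{d,q\eta}\|_{\X_1}\le C_K\eta^{\mu/2}$.

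\emph{Second bound in \eqref{eq:profile-size}.} We have $\int_BW_{d,q\eta}^{p+1}=(p+1)P_d(q\eta)$. By \eqref{eq:P-limit} this is $O_K(\eta^\mu)$, and taking the $(p+1)$-th root gives the $L^{p+1}$ bound.

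\emph{Collar estimates \eqref{eq:collar-size}.} On $B_{1+c\eta}$ the moment formula is not directly available, so I compute directly. Set $R=1+c\eta$. The gradient is $\nabla W_d=-kc_dU^{-k-1}d$ with $U=1+d\cdot y$. Then
\[
R^2|\nabla W_d|^2-(y\cdot\nabla W_d)^2=k^2c_d^2U^{-2k-2}\bigl(R^2|d|^2-(d\cdot y)^2\bigr).
\]
This is nonnegative on $B_R$, with
\[
R^2|d|^2-(d\cdot y)^2\le |d|^2(R^2-|y|^2)+|d|^2|y'|^2 ,
\]
where $y'$ is the component of $y$ transverse to $d$. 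More simply, bound it by $R^2|d|^2-(d\cdot y)^2\le (R|d|-d\cdot y)(R|d|+d\cdot y)\le 2R(R|d|+d\cdot y)$, taking $d\cdot y$ near $-R|d|$.

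Rotate and scale: $B_R=R\,B$, and put $y=Rz$. The denominator becomes $1+R\,d\cdot z=1+\rho' t$ with $\rho'=R|d|$. The key point is
\[
1-\rho'=1-(1+c\eta)(1-\eta)=(1-c)\eta+c\eta^2\asymp\eta ,
\]
because $c<1$. So the collar problem is the unshifted moment problem with $\eta$ replaced by $\eta'=1-\rho'\asymp_c\eta$, and the prefactor $c_d^2\asymp\eta^k\asymp_c\eta'^k$.

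The gradient term is then bounded by $c_d^2$ times a moment of the form
\[
\int(1-t^2)^a(1+\rho't)^{-2k-2}(1+\rho' t)\,dt\asymp \eta^{a-2k},
\]
after using $R|d|+d\cdot y=U+(R|d|-1)\le U$. Multiplying by $c_d^2\asymp\eta^k$ gives $\eta^{k+a-2k}=\eta^\mu$. The $W_d^2$ term is $c_d^2M_{2k}$-type, which is $O(\eta^\mu r_\beta)$ by \eqref{eq:moment-zero}. The $L^{p+1}$ term is $c_d^{p+1}M_{2k+2}$-type, of order $\eta^{k+1}\eta^{a-2k-1}=\eta^\mu$.

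The main care point is exactly that $R|d|<1$ with $1-R|d|$ comparable to $\eta$. This needs $c<1$, and it is why the constant depends on $c$.

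\emph{Lower-order claim \eqref{eq:H1-large}.} Here $\|\nabla W_d\|_{L^2(B)}^2=k^2c_d^2|d|^2M_{2k+2}(d,0)$. By \eqref{eq:moment-two} with $q=0$, $M_{2k+2}\asymp\eta^{a-2k-1}$, and the leading constant is positive. Combined with $c_d^2\asymp\eta^k$ and $|d|\to1$, this gives $\eta^{k+a-2k-1}=\eta^{\mu-1}$ in both directions.

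All four bounds hold for $\eta$ small. For $\eta$ bounded below the quantities are continuous and finite, which covers the remaining range if needed.
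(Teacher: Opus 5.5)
Your proof is correct. For \eqref{eq:collar-size} and \eqref{eq:H1-large} you follow the paper. You rescale by $R=1+c\eta$, so the effective parameter is $d'=Rd$ with $1-|d'|=(1-c)\eta+c\eta^2\asymp_c\eta$, and you read off the orders from Lemma~\ref{lem:moments}. The paper makes this reduction exact through $W_d(RY)=A_\eta W_{d'}(Y)$ and the scale covariance of $R^2|\nabla f|^2-(y\cdot\nabla f)^2$, so that \eqref{eq:collar-size} is literally \eqref{eq:profile-size} at $(d',q=0)$. Your pointwise bound $R^2|d|^2-(d\cdot y)^2\leq 2R\,U$, which reduces the gradient term to $M_{2k+1}(Rd,0)$, is an equivalent variant.

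The genuinely different step is the $\X_1$ bound in \eqref{eq:profile-size}. The paper estimates the velocity, mass and degenerate-gradient integrals one at a time, each by a single moment whose rescaled integral converges; the gradient term needs $2k>a$. You instead note $\mathcal Q[\mathbf W_{d,q\eta}]=Q_d(q\eta)=h_d(q\eta)+P_d(q\eta)$ and combine the coercivity \eqref{eq:Q-equivalence} with \eqref{eq:Q-limit}. This is shorter and not circular, since Proposition~\ref{prop:amplitude} does not use the present lemma. It does, however, draw on the full expansion of Proposition~\ref{prop:asymptotic}, whereas the paper's route is self-contained and needs only crude upper bounds on moments.

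Two small bookkeeping points. First, \eqref{eq:P-limit} and \eqref{eq:Q-limit} are written inside a proof where $K\subset(q_m,1)$. You should note that their derivation uses only \eqref{eq:asymptotic} and \eqref{eq:moment-two}, so they hold on every compact $K\subset[0,1)$. Second, your closing continuity remark covers the upper bounds for $\eta$ bounded below, but not the lower bound in \eqref{eq:H1-large}, which fails at $d=0$. Since the lemma is asserted only for small $\eta$, neither point affects the argument.
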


\begin{proof}
The squared velocity norm is
$c_d^2k^2q^2\eta^2M_{2k+2}=O_K(\eta^{\mu+1})$.
The mass is $O_K(\eta^\mu)$ by \eqref{eq:moment-zero}.
After rotation, the degenerate gradient integral is
\[
c_d^2k^2|d|^2K_a
\int_{-1}^1
\frac{(1-t^2)^{a+1}}{(1+|d|t-q\eta)^{2k+2}}\,dt.
\]
The substitution in \eqref{eq:scaled-moment} gives
$O_K(\eta^{k+a-2k})=O_K(\eta^\mu)$;
the resulting integral is uniformly bounded because $2k>a$.
Furthermore,
\[
\int_BW_{d,q\eta}^{p+1}\,dy
=c_d^2k(k+1)(1-|d|^2)M_{2k+2}
=O_K(\eta^\mu).
\]
This proves \eqref{eq:profile-size}.

For \eqref{eq:collar-size}, put $R=1+c\eta$ and $d'=Rd$.
Then
\[
1-|d'|=(1-c)\eta+c\eta^2\asymp_c\eta,\qquad
W_d(RY)=A_\eta W_{d'}(Y),
\]
where
\[
A_\eta=\left(\frac{1-|d|^2}{1-R^2|d|^2}\right)^{k/2}
\]
is bounded above and below by positive constants depending on $c$.
Under $y=RY$, the degenerate gradient expression satisfies
\[
R^2|\nabla_y f|^2-(y\cdot\nabla_y f)^2
=|\nabla_Y(f(RY))|^2-(Y\cdot\nabla_Y(f(RY)))^2.
\]
The Jacobian is $R^N$. Apply \eqref{eq:profile-size} to $d'$ and $q=0$.
Finally,
$\|\nabla W_d\|_2^2=k^2c_d^2|d|^2M_{2k+2}(d,0)$;
\eqref{eq:moment-two} gives \eqref{eq:H1-large}.
\end{proof}

\begin{lemma}[The nonlinear profile as a bounded linear functional]
\label{lem:dual}
For every compact $K\subset[0,1)$ and sufficiently small $\eta$,
\begin{equation}\label{eq:dual}
\|W_{d,q\eta}^{p}\|_{L^{r'}(B)}
\leq C_K\eta^{\mu/2}
\qquad(q\in K).
\end{equation}
Thus, for $f\in H^1(B)$,
\begin{equation}\label{eq:pairing}
\left|\int_B W_{d,q\eta}^p f\,dy\right|
\leq C_K\eta^{\mu/2}\|(f,0)\|_{\X_1}.
\end{equation}
\end{lemma}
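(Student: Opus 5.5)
The estimate \eqref{eq:dual} is a direct moment computation, and \eqref{eq:pairing} follows from it by H\"older's inequality combined with Lemma~\ref{lem:sobolev}. Since $W_{d,\nu}=c_dU^{-k}$ with $U=1+d\cdot y-q\eta$, we have
\[
\|W_{d,q\eta}^{p}\|_{L^{r'}(B)}^{r'}=c_d^{pr'}M_{kpr'}(d,q\eta).
\]
We write $j=kpr'=(k+2)r'$, using $kp=k+2$.

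The plan is to apply the scaled representation \eqref{eq:scaled-moment}, which gives $M_j=K_a\eta^{a+1-j}I_j(\eta,q)$. The integral $I_j(\eta,q)$ is uniformly bounded for $q\in K$ when $j>a+1$, is $O_K(\log(1/\eta))$ when $j=a+1$, and is $O_K(\eta^{j-a-1})$ when $j<a+1$; the last case means $M_j$ is simply bounded. Using $c_d\asymp\eta^{k/2}$, this yields in the main case
\[
\|W^p\|_{r'}\lesssim \eta^{\,kp/2+(a+1-j)/r'}=\eta^{\,kp/2+(a+1)/r'-kp}.
\]
The key algebraic step is then to show that this exponent is at least $\mu/2$.

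This exponent comparison is the only real point. With $1/r'=(N+3)/(2(N+1))$ and $a+1=(N+1)/2$ we get $(a+1)/r'=(N+3)/4$. The exponent is therefore $(N+3)/4-(k+2)/2=(N-1)/4-k/2=\mu/2$, exactly. So in the regime $j>a+1$ the bound holds with equality of exponents. This sharpness is expected from scaling, since the pairing is dual to the $\X_1$ norm, whose profile size is $\eta^{\mu/2}$ by Lemma~\ref{lem:profile-size}.

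In the other regimes $j\le a+1$, the moment is bounded or logarithmic, so the bound becomes $\eta^{kp/2}$ up to a logarithm. One checks $kp/2>\mu/2$, since $kp=k+2>a-k=\mu$ because $2k+2>a$. So these cases are better, and the logarithm is absorbed for small $\eta$. In fact $j>a+1$ always holds here, because $(k+2)r'>(N+1)/2$ is equivalent to $k+2>(N+3)/4$, which follows from $k>(N-2)/2$. Uniformity in $q\in K$ comes from the denominators being uniformly comparable to $1+x$, exactly as in Lemma~\ref{lem:moments}.

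Finally, H\"older's inequality with exponents $r'$ and $r$, together with \eqref{eq:sobolev}, gives
\[
\left|\int_B W^p f\right|\le \|W^p\|_{r'}\|f\|_r\le C_N C_K\eta^{\mu/2}\|(f,0)\|_{\X_1},
\]
which is \eqref{eq:pairing}. The main obstacle is only the bookkeeping in the exponent identity; everything else reuses the moment machinery from Lemma~\ref{lem:moments}.
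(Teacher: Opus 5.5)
Your proof is correct and follows the paper's argument essentially verbatim. You take $j=kpr'=(k+2)r'$, check $j>a+1$ from $k>(N-2)/2$, and combine the scaled moment bound $M_j\le C_K\eta^{a+1-j}$ with $c_d^p\asymp\eta^{(k+2)/2}$ and $(a+1)/r'=(N+3)/4$ to get the exponent exactly $\mu/2$, then finish with H\"older and Lemma~\ref{lem:sobolev}. The only difference is your treatment of the regimes $j\le a+1$, which is harmless but superfluous since, as you observe yourself, $j>a+1$ always holds in the range \eqref{eq:nlw}.
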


\begin{proof}
For every fixed $j>a+1$, the scaled integral
\eqref{eq:scaled-moment} is bounded uniformly on $K$, so
$M_j(d,q\eta)\leq C_K\eta^{a+1-j}$.
Take $j=kp r'=(k+2)r'$. This exponent exceeds $a+1$:
indeed $k>(N-2)/2$ implies $k+2>(N+3)/4$.
As $c_d^p\asymp\eta^{(k+2)/2}$,
\[
\|W_{d,q\eta}^p\|_{r'}
=c_d^p M_{(k+2)r'}^{1/r'}
\leq C_K\eta^{(a+1)/r'-(k+2)/2}
=C_K\eta^{\mu/2}.
\]
The last equality uses $(a+1)/r'=(N+3)/4$.
H\"older's inequality and Lemma~\ref{lem:sobolev} give
\eqref{eq:pairing}.
\end{proof}

\begin{proposition}[One-sided perturbative sign criterion]
\label{prop:perturbative}
Fix $m>0$ and a nonempty compact $K\subset(q_m,1)$.
There exist $b_0,\epsilon_0,\eta_0>0$, depending only on $N,p,m,K$,
such that the following holds.
If $0<\eta=1-|d|<\eta_0$, $q\in K$, $\lambda\geq m$,
$\sigma=\pm1$, and $(v,z)\in H^1(B)\times L^2(B)$ satisfy
\begin{equation}\label{eq:small-X}
\|(v,z)-\sigma\lambda\mathbf W_{d,q\eta}\|_{\X_1}
\leq\epsilon_0\lambda\eta^{\mu/2},
\end{equation}
then
\begin{equation}\label{eq:negative-neighbourhood}
H[v,z]\leq-b_0\lambda^{p+1}\eta^\mu<0.
\end{equation}
No smallness of the $L^{p+1}$ error is required.
\end{proposition}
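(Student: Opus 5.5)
The plan is to apply the upper perturbation estimate, Lemma~\ref{lem:upper-perturbation}, with base point $\mathbf S=\sigma\lambda\mathbf W_{d,q\eta}$ and perturbation $\mathbf h=(v,z)-\mathbf S$, and to compare the result with the negative margin of Proposition~\ref{prop:amplitude}.

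First I record that $H$ is even, since every term in \eqref{eq:H} is quadratic or involves $|f|^{p+1}$. Hence $H[\mathbf S]=H[\lambda\mathbf W_{d,q\eta}]$. Proposition~\ref{prop:amplitude} supplies $b_0,\eta_0$ with
\[
H[\mathbf S]\leq-2b_0\lambda^{p+1}\eta^\mu
\qquad\text{for }q\in K,\ \lambda\geq m,\ 0<\eta<\eta_0.
\]
Next I bound the coefficients appearing in \eqref{eq:general-upper}. By Lemma~\ref{lem:profile-size}, $\|\mathbf S\|_{\X_1}\leq C_K\lambda\eta^{\mu/2}$. Since $W_{d,q\eta}>0$ on $B$, we have $|S_1|^{p-1}S_1=\sigma\lambda^pW_{d,q\eta}^p$. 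Lemma~\ref{lem:dual} then gives $\||S_1|^{p-1}S_1\|_{r'}\leq C_K\lambda^p\eta^{\mu/2}$, and in particular this function lies in $L^{r'}$, so the lemma applies. By hypothesis \eqref{eq:small-X}, $D\leq\epsilon_0\lambda\eta^{\mu/2}$, and we may assume $\epsilon_0\leq1$.

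Inserting these bounds into \eqref{eq:general-upper} yields
\[
H[v,z]\leq-2b_0\lambda^{p+1}\eta^\mu
+C\bigl[C_K(\lambda+\lambda^p)\epsilon_0\lambda\eta^\mu+\epsilon_0^2\lambda^2\eta^\mu\bigr].
\]
The one point requiring care is uniformity in unbounded $\lambda$, since the error terms must be dominated by $\lambda^{p+1}$. For $\lambda\geq m$ we have $\lambda^2=\lambda^{p+1}\lambda^{1-p}\leq m^{1-p}\lambda^{p+1}$. Consequently the bracket is at most $C_{K,m}\epsilon_0\lambda^{p+1}\eta^\mu$, using $\epsilon_0^2\leq\epsilon_0$.

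Choosing $\epsilon_0$ so that $CC_{K,m}\epsilon_0\leq b_0$ gives $H[v,z]\leq-b_0\lambda^{p+1}\eta^\mu$, which is \eqref{eq:negative-neighbourhood}. The nonlinear term required no smallness of the $L^{p+1}$ error. Its contribution entered only through convexity, which yields a lower bound on $|S_1+f|^{p+1}$, hence an upper bound on $H$, together with the dual estimate of Lemma~\ref{lem:dual}. I expect no real obstacle here. The delicate inputs, namely the sign margin and the $\eta^{\mu/2}$ dual bound, were already established in Proposition~\ref{prop:amplitude} and Lemma~\ref{lem:dual}.
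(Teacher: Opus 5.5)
Your proposal is correct and follows the paper's proof essentially step for step: evenness of $H$, the margin $-2b_0\lambda^{p+1}\eta^\mu$ from Proposition~\ref{prop:amplitude}, Lemmas~\ref{lem:profile-size} and \ref{lem:dual} inserted into Lemma~\ref{lem:upper-perturbation}, and absorption of the error through $\lambda^2\leq m^{1-p}\lambda^{p+1}$. The one implicit point, which the paper also leaves implicit, is that $\eta_0$ may have to be reduced so that the small-$\eta$ hypotheses of Lemmas~\ref{lem:profile-size} and \ref{lem:dual} hold.
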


\begin{proof}
Choose the constants in Proposition~\ref{prop:amplitude}.
By the symmetry $H[-v,-z]=H[v,z]$, we can assume $\sigma=1$.
Write $(f,g)=(v,z)-\lambda\mathbf W_{d,q\eta}$ and
$D=\|(f,g)\|_{\X_1}$.
Lemmas~\ref{lem:profile-size} and \ref{lem:dual}, applied in
Lemma~\ref{lem:upper-perturbation}, yield
\begin{equation}\label{eq:one-sided}
H[v,z]\leq H[\lambda\mathbf W_{d,q\eta}]
+C_K\left[(\lambda+\lambda^p)\eta^{\mu/2}D+D^2\right].
\end{equation}

Under \eqref{eq:small-X}, the error on the right is at most
\[
C_K\left[(\epsilon_0+\epsilon_0^2)\lambda^2
+\epsilon_0\lambda^{p+1}\right]\eta^\mu
\leq C_{m,K}(\epsilon_0+\epsilon_0^2)
\lambda^{p+1}\eta^\mu.
\]
Choose $\epsilon_0$ so that this is at most
$b_0\lambda^{p+1}\eta^\mu$, and apply
\eqref{eq:amplitude-negative}.
\end{proof}

\begin{remark}[Failure of the nonlinear embedding]
\label{rem:strict-weakening}
The embedding $\X_1\hookrightarrow L^{p+1}$ is false in the
superconformal range. Choose nonzero smooth cutoffs
$\chi,\psi$ supported, respectively, in $(1,2)$ and
$\{z'\in\R^{N-1}:|z'|<1\}$, and write $y=(y_1,y')$.
For small $h>0$ set
\[
f_h(y)=h^{-(N-1)/2+\theta}
\chi\left(\frac{1+y_1}{h^2}\right)
\psi\left(\frac{y'}h\right),
\qquad
0<\theta<\frac{N-1}{2}-\frac{N+1}{p+1}.
\]
The interval for $\theta$ is nonempty because $p>p_c$.
These smooth functions are supported inside $B$.
Their support has volume of order $h^{N+1}$; on it
$1-y_1^2=O(h^2)$.
The positive semidefinite quadratic form $I-y\otimes y$ therefore
gives
\[
\|(f_h,0)\|_{\X_1}=O(h^\theta)\longrightarrow0,\qquad
\|f_h\|_{p+1}\asymp
h^{\theta+(N+1)/(p+1)-(N-1)/2}\longrightarrow\infty.
\]
The same construction scales to each fixed $B_R$.
This gives perturbations small in $\X_R$ whose nonlinear norm
is arbitrarily large.
\end{remark}

The sign criterion applies to actual solutions only when the new
vertex is admissible and the data are controlled on its observation
ball. The next section obtains both properties from
\eqref{eq:nc} and the collar in Theorem~\ref{thm:exclusion}.

\section{Proofs of the main results}\label{sec:geometry}

\subsection{Exclusion of concentration}\label{sec:single-proof}

Fix $m>0$ and $c\in(q_m,1)$. We use the comparison error
$\E^R_{\lambda,d,\sigma}$ defined in \eqref{eq:error}.
Choose
\begin{equation}\label{eq:choices}
q=\frac{q_m+c}{2},\qquad
\delta=\frac{1+\delta_0}{2}.
\end{equation}
For $\rho=|d|=1-\eta$ close to one, put $e=d/\rho$ and
\begin{equation}\label{eq:chosen-map}
\xi_\eta=\frac{q\eta}{\rho-\delta+\delta q\eta},
\qquad
b_\eta=1-\delta\xi_\eta,\qquad
\gamma_\eta=-\xi_\eta e,\qquad R_\eta=1+c\eta.
\end{equation}
The denominator is positive for small $\eta$.

\begin{lemma}[Geometry of the observation balls]\label{lem:geometry}
There are $\eta_g>0$, $\kappa>0$ and $C_g>0$, depending only on
$c,q,\delta$, such that, for $0<\eta<\eta_g$,
\begin{equation}\label{eq:geom-bounds}
\frac12\leq b_\eta\leq1,\qquad
\xi_\eta\leq C_g\eta,\qquad
R_\eta-(\xi_\eta+b_\eta)\geq\kappa\eta.
\end{equation}
Consequently, for $Y\in B$ and $y=\gamma_\eta+b_\eta Y$,
\begin{equation}\label{eq:matrix-bounds}
|y-Y|\leq C_g\eta,\qquad
Q_{R_\eta}(y):=R_\eta^2I-y\otimes y\geq\kappa\eta I,
\end{equation}
after decreasing $\kappa$ if necessary.
\end{lemma}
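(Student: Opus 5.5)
The proof is a direct expansion in $\eta$ of the explicit quantities in \eqref{eq:chosen-map}. We use $\rho=1-\eta$, and $q<c<1$, $\delta\in(\delta_0,1)$ are fixed by \eqref{eq:choices}.

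\emph{First two bounds.} The denominator of $\xi_\eta$ is
\[
\rho-\delta+\delta q\eta=(1-\delta)-(1-\delta q)\eta,
\]
which is at least $(1-\delta)/2$ for $\eta$ small. Hence $0<\xi_\eta\leq 2q\eta/(1-\delta)=:C_g\eta$. It follows that $b_\eta=1-\delta\xi_\eta\leq1$, and $b_\eta\geq1/2$ once $\delta C_g\eta\leq1/2$.

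\emph{Collar gap.} This is the only step with content. We have
\[
\xi_\eta+b_\eta=1+(1-\delta)\xi_\eta.
\]
Expanding the denominator,
\[
(1-\delta)\xi_\eta=\frac{q\eta}{1-\frac{1-\delta q}{1-\delta}\eta}=q\eta+O(\eta^2).
\]
Therefore $R_\eta-(\xi_\eta+b_\eta)=(c-q)\eta+O(\eta^2)$. Since $c-q=(c-q_m)/2>0$, the choice $\kappa=(c-q)/2$ works for $\eta<\eta_g$ small. The $O(\eta^2)$ constant depends only on $q,\delta$, so $\eta_g$ depends only on $c,q,\delta$.

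\emph{Consequences for $Y\in B$.} First,
\[
|y-Y|=|\gamma_\eta+(b_\eta-1)Y|\leq\xi_\eta+\delta\xi_\eta\leq 2C_g\eta;
\]
we rename $C_g$ to absorb the factor $2$. Second, $|y|\leq\xi_\eta+b_\eta|Y|<\xi_\eta+b_\eta\leq R_\eta-\kappa\eta$. For any unit vector $v$,
\[
v^{T}\bigl(R_\eta^2I-y\otimes y\bigr)v\geq R_\eta^2-|y|^2\geq R_\eta^2-(R_\eta-\kappa\eta)^2\geq\kappa\eta R_\eta\geq\kappa\eta,
\]
using $R_\eta\geq1$. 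This is \eqref{eq:matrix-bounds}, with no further decrease of $\kappa$ needed beyond this choice.

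The only step requiring care is the first-order cancellation in the collar gap: the coefficient of $\eta$ in $\xi_\eta+b_\eta-1$ must be exactly $q$, which is strictly less than $c$.
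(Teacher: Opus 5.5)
Your proof is correct and follows essentially the same route as the paper: the identity $\xi_\eta+b_\eta=1+(1-\delta)\xi_\eta$ with $(1-\delta)\xi_\eta=q\eta+O(\eta^2)$ yields the gap $(c-q)\eta$, and the matrix bound comes from $R_\eta^2-|y|^2\geq R_\eta^2-(\xi_\eta+b_\eta)^2$. Your explicit constants (denominator at least $(1-\delta)/2$, $\kappa=(c-q)/2$) simply make quantitative the limits $\xi_\eta/\eta\to q/(1-\delta)$ and $(R_\eta-\xi_\eta-b_\eta)/\eta\to c-q$ that the paper uses.
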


\begin{proof}
From \eqref{eq:chosen-map},
\[
\frac{\xi_\eta}{\eta}\longrightarrow\frac{q}{1-\delta},
\qquad b_\eta\longrightarrow1.
\]
Moreover,
\[
\xi_\eta+b_\eta=1+(1-\delta)\xi_\eta,\qquad
\frac{R_\eta-(\xi_\eta+b_\eta)}{\eta}\longrightarrow c-q>0.
\]
This proves \eqref{eq:geom-bounds}.
The first estimate in \eqref{eq:matrix-bounds} follows from
$|y-Y|\leq \xi_\eta+|b_\eta-1|\leq(1+\delta)\xi_\eta$.
For the second, use
\[
Q_{R_\eta}(y)\geq(R_\eta^2-|y|^2)I
\geq\bigl(R_\eta^2-(\xi_\eta+b_\eta)^2\bigr)I
\]
and factor the last difference of squares.
\end{proof}

\begin{lemma}[Uniform transport estimate]\label{lem:transport}
There exists $C_T>0$, depending only on $N,p,c,q,\delta$, such that
for $0<\eta<\eta_g$ and
$(f,g)\in H^1(B_{R_\eta})\times L^2(B_{R_\eta})$,
\begin{equation}\label{eq:transport-X}
\|\mathcal T_{b_\eta,\gamma_\eta}(f,g)\|_{\X_1}
\leq C_T\|(f,g)\|_{\X_{R_\eta}}.
\end{equation}
\end{lemma}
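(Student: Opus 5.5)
We bound the two components of $\mathcal T_{b_\eta,\gamma_\eta}(f,g)$ separately in the four pieces of the $\X_1$ norm. Throughout, the change of variables $y=\gamma_\eta+b_\eta Y$ maps $B$ into $B_{\xi_\eta+b_\eta}\subset B_{R_\eta}$. Its Jacobian is $b_\eta^N\in[2^{-N},1]$ by Lemma~\ref{lem:geometry}, and $b^k$ is bounded above and below. The zeroth-order terms are therefore immediate:
\[
\int_B b^{2k}f(y)^2\,dY\leq C\int_{B_{R_\eta}}f^2\,dy,
\]
and likewise for $b\,g(y)$ and $k(b-1)f(y)$ in the second component.

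\textbf{First component.} We have $\nabla_Y(b^kf(y))=b^{k+1}\nabla f(y)$, so the quantity to control is
\[
|\nabla_Yf|^2-(Y\cdot\nabla_Yf)^2\simeq \nabla f(y)^{T}(I-Y\otimes Y)\nabla f(y),
\]
while the $\X_{R_\eta}$ norm supplies $\nabla f^{T}Q_{R_\eta}(y)\nabla f$. It suffices to prove the matrix inequality
\[
I-Y\otimes Y\leq C\,Q_{R_\eta}(y),\qquad Y\in B.
\]
Write $Y=(y-\gamma)/b$. Along a unit vector $v$ the difference is
\[
v^{T}\bigl(b^2(I-Y\otimes Y)-(R^2I-y\otimes y)\bigr)v
=b^2-R^2+(y\cdot v)^2-(y\cdot v-\gamma\cdot v)^2 .
\]
Since $b\leq1$ and $R\geq1$, we have $b^2-R^2\leq0$. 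The remaining difference of squares is at most $2|y||\gamma|\leq C\eta$. Hence
\[
b^2(I-Y\otimes Y)\leq Q_R(y)+C\eta I.
\]
The lower bound $Q_{R_\eta}(y)\geq\kappa\eta I$ from \eqref{eq:matrix-bounds} absorbs the $C\eta I$ term, and this gives the inequality with a constant depending only on $c,q,\delta$.

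\textbf{Main obstacle: the transport term in the second component.} The dangerous term is $b(y-Y)\cdot\nabla f(y)$, because it involves the full gradient and not the degenerate one. Here $y-Y=\gamma+(b-1)Y$ has size $O(\eta)$ by \eqref{eq:matrix-bounds}. Using \eqref{eq:matrix-bounds} once more,
\[
|(y-Y)\cdot\nabla f|^2\leq |y-Y|^2|\nabla f|^2\leq C\eta^2\kappa^{-1}\eta^{-1}\,\nabla f^{T}Q_{R_\eta}\nabla f .
\]
The factor $\eta$ in $|y-Y|^2=O(\eta^2)$ beats the degeneracy $\eta^{-1}$ of $Q_{R_\eta}^{-1}$. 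After the change of variables this term is therefore bounded by $C\eta\|(f,g)\|_{\X_{R_\eta}}^2$. This matching of the vertex shift to the collar width is exactly why the observation ball is $B_{1+c\eta}$ with $c>q$. Summing the pieces gives \eqref{eq:transport-X}, first for smooth $(f,g)$ and then by density in $H^1\times L^2$.
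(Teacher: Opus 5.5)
Your proposal is correct and follows the paper's proof essentially step by step. Both use the quadratic-form comparison $I-Y\otimes Y\leq C\,Q_{R_\eta}(y)$, obtained from an $O(\eta)$ discrepancy absorbed by $Q_{R_\eta}(y)\geq\kappa\eta I$, and both bound the transport term via $\eta^2|\nabla f|^2\leq C\eta\,\nabla f\cdot Q_{R_\eta}(y)\nabla f$. The only differences are cosmetic: you compare $b^2(I-Y\otimes Y)$ with $Q_{R_\eta}(y)$ one-sidedly using $b\leq1\leq R$, rather than bounding the operator norm of the difference. Your final density step is harmless but unnecessary, since the estimates hold pointwise a.e. for $H^1$ functions under the affine change of variables.
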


\begin{proof}
We write $b,\gamma,R$ for the parameters in
\eqref{eq:chosen-map}. The image $\gamma+bB$ is contained in $B_R$
with the margin in Lemma~\ref{lem:geometry}.
At corresponding points $y=\gamma+bY$,
\[
\bigl\|(I-Y\otimes Y)-Q_R(y)\bigr\|_{\mathrm{op}}
\leq |1-R^2|+(|y|+|Y|)|y-Y|
\leq C\eta.
\]
The lower bound $Q_R(y)\geq\kappa\eta I$ therefore implies the
quadratic-form inequality
\begin{equation}\label{eq:form-compare}
I-Y\otimes Y\leq C Q_R(y).
\end{equation}
Since $\nabla_Y(b^kf(y))=b^{k+1}\nabla f(y)$,
\eqref{eq:form-compare} controls the degenerate gradient of the first
component of \eqref{eq:transport}.

For the second component, \eqref{eq:transport} and
Lemma~\ref{lem:geometry} give the pointwise estimate
\[
|\mathcal T_{b,\gamma}^{(2)}(f,g)(Y)|^2
\leq C\bigl(|g(y)|^2+|f(y)|^2+\eta^2|\nabla f(y)|^2\bigr).
\]
The final term is bounded by
$C\eta\,\nabla f(y)\cdot Q_R(y)\nabla f(y)$, hence by a constant
times the integrand in \eqref{eq:X}. The first component in $L^2$
is immediate. Integrating and using $dY=b^{-N}dy$,
with $b\geq1/2$, proves \eqref{eq:transport-X}.
\end{proof}

\begin{proof}[Proof of Theorem~\ref{thm:exclusion}]
Keep the choices \eqref{eq:choices}.
Reduce $\eta_*$ so that Lemmas~\ref{lem:geometry} and
\ref{lem:transport} apply, Proposition~\ref{prop:perturbative} applies
with the prescribed $m$ and $K=\{q\}$, and $1+c\eta_*<\delta_0^{-1}$.
Let $s$ be large, put $\tau=e^{-s}$ and $t=T_0-\tau$, and consider
the vertex
\begin{equation}\label{eq:admissible-vertex}
\widehat x=x_0-\xi_\eta\tau e,\qquad
\widehat T=T_0-\delta\xi_\eta\tau.
\end{equation}
For all sufficiently large $s$, uniformly for $0<\eta<\eta_*$,
this vertex has positive time and lies strictly inside the cone
\eqref{eq:nc}, since $\delta>\delta_0$.
It is therefore admissible. Moreover
\[
\widehat T-t=b_\eta\tau>0,\qquad
\frac{\widehat x-x_0}{\tau}=\gamma_\eta.
\]
The phase data at this vertex and the same physical time are exactly
$\mathcal T_{b_\eta,\gamma_\eta}(w,w_s)$.

The shift in \eqref{eq:nu} is
\begin{equation}\label{eq:positive-shift}
\nu=\frac{(\rho-\delta)\xi_\eta}{1-\delta\xi_\eta}
=q\eta.
\end{equation}
Thus the construction produces a positive shift beyond the limiting
sign threshold for every $\lambda\geq m$, while keeping the vertex admissible for the actual
solution. By \eqref{eq:exact-transport},
\[
\mathcal T_{b_\eta,\gamma_\eta}(W_d,0)
=\mathbf W_{d,q\eta}.
\]

Suppose that, for some $\lambda\geq m$, the left-hand side of
\eqref{eq:main-exclusion} is smaller than $\varepsilon_*$.
By linearity and Lemma~\ref{lem:transport}, the data in the new frame
satisfy
\[
\|(\widehat w,\widehat w_{\widehat s})
-\sigma\lambda\mathbf W_{d,q\eta}\|_{\X_1}
\leq C_T\varepsilon_*\lambda\eta^{\mu/2}.
\]
Choose $\varepsilon_*>0$ so that
$C_T\varepsilon_*<\epsilon_0$, with $\epsilon_0$ from
Proposition~\ref{prop:perturbative}.
It follows that
$H[\widehat w,\widehat w_{\widehat s}]<0$.
This contradicts \eqref{eq:nonnegative} at the admissible vertex
\eqref{eq:admissible-vertex}, and proves \eqref{eq:main-exclusion}.
\end{proof}

\begin{remark}\label{rem:collar}
The strict inequality $q<c$ supplies the margin
$R_\eta-(\xi_\eta+b_\eta)\gtrsim\eta$.
It is used both for the inclusion of the observation balls and for
the norm comparison \eqref{eq:form-compare}.
The sufficient relation $c>q_m$ is consequently explicit.
In particular the amplitude lower bound $m>0$ is fixed throughout;
uniformity as $m\downarrow0$ is not asserted.
\end{remark}

\subsection{The upper blow-up bound}\label{sec:bound}

With the concentration excluded, the profile parameters lie in a
compact subset of $\{|d|<1\}$. The sign of the energy then bounds
the amplitude, and the enlarged observation ball controls the
full gradient on $B$.

\begin{proof}[Proof of Theorem~\ref{thm:bound}]
Let $\eta_*,\varepsilon_*$ be as in
Theorem~\ref{thm:exclusion}, with $\eta_*<\eta_c$.
Choose $\varepsilon<\min\{\varepsilon_*,1\}$, to be decreased below.
At a sufficiently large time $s$, consider any comparison for which
\begin{equation}\label{eq:chosen-error}
\E^{R(\eta)}_{\lambda,d,\sigma}(w(s),w_s(s))<\varepsilon,
\qquad\eta=1-|d|,\quad\lambda\geq m.
\end{equation}
If $\eta<\eta_*$ then $R(\eta)=1+c\eta$, contrary to
Theorem~\ref{thm:exclusion}. Hence $\eta\geq\eta_*$.

We next bound the amplitude. For these parameters,
\begin{equation}\label{eq:denom-bound}
1-|d|R(\eta)
=\eta-c(1-\eta)\min\{\eta,\eta_c\}
\geq(1-c)\eta\geq(1-c)\eta_*.
\end{equation}
Thus the profiles and their first derivatives are uniformly bounded
on $B_{R(\eta)}$. In addition, compactness of
$\{|d|\leq1-\eta_*\}$ and positivity of $W_d$ give
\begin{equation}\label{eq:compact-potential}
P_d(0)\geq v_*>0,\qquad
\|(W_d,0)\|_{\X_1}+\|W_d^p\|_{L^{r'}(B)}\leq C_*.
\end{equation}
Changing the overall sign if needed, write
$(w,w_s)=\lambda(W_d,0)+(f,g)$.
Since restriction from $\X_R$ to $\X_1$ decreases the norm,
\[
\|(f,g)\|_{\X_1}\leq\varepsilon\lambda\eta^{\mu/2}
\leq\varepsilon\lambda.
\]
Lemma~\ref{lem:upper-perturbation} and \eqref{eq:compact-potential} give
\[
0\leq H[w,w_s]
\leq C_*(1+\varepsilon)^2\lambda^2
-(v_*-C_*\varepsilon)\lambda^{p+1},
\]
where $C_*$ may be enlarged. The left inequality holds at the original
admissible vertex $(x_0,T_0)$.
Reduce $\varepsilon$ so that $C_*\varepsilon\leq v_*/2$.
It follows that
\begin{equation}\label{eq:amplitude-bound}
\lambda^{p-1}\leq\frac{8C_*}{v_*}.
\end{equation}
This bound is deduced from the PDE sign, not imposed on the comparison.

Using \eqref{eq:denom-bound}, \eqref{eq:chosen-error}, and
\eqref{eq:amplitude-bound}, we obtain
\[
\|(w,w_s)\|_{\X_{R(\eta)}}
\leq\lambda\|(W_d,0)\|_{\X_{R(\eta)}}
+\varepsilon\lambda\eta^{\mu/2}\leq C.
\]
On the smaller ball $B$,
\begin{equation}\label{eq:coercive-on-B}
R(\eta)^2I-y\otimes y
\geq(R(\eta)^2-1)I
\geq2c\min\{\eta_*,\eta_c\}I.
\end{equation}
The last norm therefore controls the full $H^1(B)\times L^2(B)$
norm and gives $M(s)\leq C$.

Finally, if $\E_m(s)<\varepsilon$, the definition of the infimum
provides a comparison satisfying \eqref{eq:chosen-error}.
This proves \eqref{eq:pointwise-bound}, including the bounds for
every sufficiently accurate comparison.
Under \eqref{eq:approximation}, this gives a uniform bound on
$M(s)$ for all sufficiently large $s$.

It remains to express the bound in physical variables. Put
$\tau=T_0-t$ and $B_t=B(x_0,\tau)$. The change of variables gives
\[
\tau^{k-N/2}\|u(t)\|_{L^2(B_t)}=\|w(s)\|_{L^2(B)},\qquad
\tau^{k+1-N/2}\|\nabla u(t)\|_{L^2(B_t)}
=\|\nabla w(s)\|_{L^2(B)}.
\]
Also
\[
\tau^{k+1-N/2}\|u_t(t)\|_{L^2(B_t)}
=\|w_s+kw+y\cdot\nabla w\|_{L^2(B)}
\leq C_k M(s).
\]
The bound on $M(s)$ proves \eqref{eq:physical}, after increasing $C$.
\end{proof}

\section{Exclusion of an isolated dominant profile}\label{sec:multi}

Fix $m>0$ and $K\geq1$. A parameter $0<\eta_0<1/2$ will be
chosen below. Consider a finite family
\begin{equation}\label{eq:family}
d_j=\rho_j e_j,\quad |e_j|=1,\quad
\eta_j=1-\rho_j\in(0,\eta_0),\quad
\lambda_j>0,\quad \sigma_j=\pm1,\qquad 1\leq j\leq J.
\end{equation}
Select an index $i$ such that
\begin{equation}\label{eq:dominant}
E_j=\lambda_j^2\eta_j^\mu,\qquad E_i=\max_j E_j,
\qquad \lambda_i\geq m,\qquad
\sum_{j=1}^J\sqrt{E_j}\leq K\sqrt{E_i}.
\end{equation}
Put
\begin{equation}\label{eq:amplitude-comparison}
t_j=\sqrt{E_j/E_i},\qquad
\lambda_j=t_j\lambda_i(\eta_i/\eta_j)^{\mu/2},\qquad
0<t_j\leq1,\quad \sum_jt_j\leq K.
\end{equation}
When $J\leq J_0$, the last condition in \eqref{eq:dominant} holds
with $K=J_0$. It also allows families of unbounded cardinality,
provided the indicated sum remains controlled.

For $j\ne i$, define the separation scales
\begin{equation}\label{eq:omega}
\omega_{ij}=\max\left\{\eta_i,
\eta_i^{a/(2k)}\eta_j^{\beta/(2k)}\right\},
\qquad \beta=2k-a=k-\mu>0.
\end{equation}
Recall that $a+\beta=2k$. The geometric assumptions are
\begin{equation}\label{eq:isolation}
\eta_i\leq\varepsilon_g\min\{1,2(1-\delta_0)\},\qquad
\omega_{ij}\leq\varepsilon_g|e_i-e_j|^2\quad(j\ne i),
\end{equation}
where $\varepsilon_g>0$ will be chosen small. Only separation from
$i$ is required. Put
\begin{equation}\label{eq:theta}
\theta_i^2=\min\{1,2(1-\delta_0),\min_{j\ne i}|e_i-e_j|^2\},
\qquad \delta=1-\theta_i^2/4.
\end{equation}
The minimum over an empty set is $+\infty$. Thus $\theta_i>0$,
$\delta\geq(1+\delta_0)/2$, and
$\eta_i\leq\varepsilon_g\theta_i^2$.
For $q\in[1/2,1)$, set
\begin{equation}\label{eq:multi-map}
\xi_i=\frac{q\eta_i}{\rho_i-\delta+\delta q\eta_i},\qquad
b_i=1-\delta\xi_i,\qquad \gamma_i=-\xi_i e_i,\qquad
\nu_j=\frac{\xi_i(d_j\cdot e_i-\delta)}{b_i}.
\end{equation}
The observation region in the original coordinates is
$D_i=\gamma_i+b_iB$.

\begin{lemma}[A common admissible observation region]\label{lem:multi-geometry}
For a sufficiently small absolute $\varepsilon_g$, uniformly in
$q\in[1/2,1)$ and in all families satisfying \eqref{eq:isolation},
\begin{gather}
\tfrac12\leq b_i\leq1,\qquad
2\eta_i/\theta_i^2\leq\xi_i\leq8\eta_i/\theta_i^2,
\qquad D_i\subset B_{1+2\eta_i},\label{eq:multi-geometry}\\
\nu_i=q\eta_i,\qquad
\tfrac12\eta_i\leq-\nu_j\leq32\eta_i/\theta_i^2
\quad(j\ne i).\label{eq:other-shifts}
\end{gather}
Every $W_{d_j}$ is smooth on $\overline{D_i}$, and
\begin{equation}\label{eq:multi-transport}
\mathcal T_{b_i,\gamma_i}
\left(\sum_j\sigma_j\lambda_j(W_{d_j},0)\right)
=\sum_j\sigma_j\lambda_j\mathbf W_{d_j,\nu_j}.
\end{equation}
For all sufficiently large $s$, the vertex
\begin{equation}\label{eq:multi-vertex}
\widehat x_i=x_0-\xi_i e^{-s}e_i,\qquad
\widehat T_i=T_0-\delta\xi_i e^{-s}
\end{equation}
is admissible, uniformly over the families.
\end{lemma}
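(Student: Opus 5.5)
The lemma is mostly elementary algebra modelled on Lemma~\ref{lem:geometry}. The one step needing care is the positivity of every denominator $1+d_j\cdot y$ on $\overline{D_i}$ for $j\ne i$. I would proceed in the order of the displayed claims.

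\emph{Step 1: the bounds on $b_i$ and $\xi_i$.} Write $\delta=1-\theta_i^2/4$, so that
\[
\rho_i-\delta=\theta_i^2/4-\eta_i .
\]
Since $\eta_i\le\varepsilon_g\theta_i^2$, the denominator of $\xi_i$ equals $\theta_i^2/4-\eta_i+\delta q\eta_i$ and lies between $\theta_i^2/8$ and $\theta_i^2/4$ once $\varepsilon_g\le 1/8$. Using $q\in[1/2,1)$, this gives $2\eta_i/\theta_i^2\le\xi_i\le 8\eta_i/\theta_i^2\le 8\varepsilon_g$, and hence $\tfrac12\le b_i\le 1$.

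For the inclusion, any point of $D_i$ has norm at most
\[
\xi_i+b_i=1+(1-\delta)\xi_i=1+\tfrac{\theta_i^2}{4}\xi_i\le 1+2\eta_i .
\]

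\emph{Step 2: the shifts $\nu_j$.} For $j=i$, the same algebra as in \eqref{eq:positive-shift} gives $\nu_i=q\eta_i$. For $j\ne i$, write
\[
d_j\cdot e_i=\rho_j\Bigl(1-\tfrac12|e_i-e_j|^2\Bigr)\le 1-\tfrac12\theta_i^2 .
\]
Then $d_j\cdot e_i-\delta\le-\theta_i^2/4$, and also $d_j\cdot e_i-\delta\ge-1-\delta\ge-2$. Combining these with the bounds on $\xi_i$ and $b_i$ gives
\[
-\nu_j\ge \tfrac{\theta_i^2}{4}\,\xi_i\ge \tfrac12\eta_i,
\qquad
-\nu_j\le 2\xi_i/b_i\le 32\eta_i/\theta_i^2 .
\]

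\emph{Step 3: smoothness of the profiles on $\overline{D_i}$ (the main obstacle).} The issue is that $D_i$ pokes outside $B$ by up to $2\eta_i$, while $W_{d_j}$ only needs $1+d_j\cdot y>0$.

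For $y=\gamma_i+b_iY$ with $|Y|\le1$, the identity used in Lemma~\ref{lem:vertex-identity} gives
\[
1+d_j\cdot y=b_i\,(1+d_j\cdot Y-\nu_j).
\]
Since $1+d_j\cdot Y\ge 1-\rho_j=\eta_j>0$ and $-\nu_j>0$ by Step~2, this quantity is at least $b_i(\eta_j+\tfrac12\eta_i)>0$ for $j\ne i$. For $j=i$ it is at least $b_i(1-q)\eta_i>0$. So every $W_{d_j}$ is smooth on $\overline{D_i}$.

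The condition $\omega_{ij}\le\varepsilon_g|e_i-e_j|^2$ is not needed here. I expect it to enter only later, in the interaction estimates, and I would note that it is inessential for this lemma.

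\emph{Step 4: the transport identity and admissibility.} Identity \eqref{eq:multi-transport} follows from the linearity of $\mathcal T_{b,\gamma}$ in $(f,g)$ together with \eqref{eq:exact-transport} applied to each $d_j$. For this, check that \eqref{eq:nu}, with $\widehat T-T_0=-\delta\xi_i\tau$, $\widehat x-x_0=-\xi_i\tau e_i$ and $\widehat T-t=b_i\tau$, reproduces the formula for $\nu_j$ in \eqref{eq:multi-map}.

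For admissibility, we have $|\widehat x_i-x_0|=\xi_i e^{-s}\le 8\varepsilon_g e^{-s}$, which is small for large $s$, and $\widehat T_i=T_0-\delta\,|\widehat x_i-x_0|$. Since $\delta\ge(1+\delta_0)/2>\delta_0$, the vertex lies strictly inside the cone \eqref{eq:nc}. Then \eqref{eq:graphlower} gives $\widehat T_i< T(\widehat x_i)$, and $\widehat T_i>t_0\ge0$ for large $s$. All these bounds are uniform because $\xi_i\le8\varepsilon_g$ independently of the family.
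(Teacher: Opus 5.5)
Your proof is correct and follows the paper's argument step for step. It uses the same denominator $\theta_i^2/4-\eta_i(1-\delta q)$ pinned between $\theta_i^2/8$ and $\theta_i^2/4$, the bound $\delta-d_j\cdot e_i\geq\theta_i^2/4$, the factorization $1+d_j\cdot y=b_i(1+d_j\cdot Y-\nu_j)$, and admissibility from $\delta>\delta_0$ together with $\xi_i\leq 8\varepsilon_g$.

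One correction to your side remark: the condition $\omega_{ij}\leq\varepsilon_g|e_i-e_j|^2$ is not inessential for this lemma. Since $\omega_{ij}\geq\eta_i$, it is exactly what yields $\eta_i\leq\varepsilon_g\theta_i^2$ and $\theta_i>0$, which your Step 1 relies on. Only its $\eta_j$-dependent part goes unused here.
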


\begin{proof}
The denominator in \eqref{eq:multi-map} equals
$\theta_i^2/4-\eta_i(1-\delta q)$. For
$\varepsilon_g\leq1/8$ it lies between $\theta_i^2/8$ and
$\theta_i^2/4$. This gives the bounds for $\xi_i$.
Decreasing $\varepsilon_g$ so that $8\varepsilon_g\leq1/2$
gives the bound for $b_i$. Moreover,
\[
|\gamma_i|+b_i=1+(1-\delta)\xi_i\leq1+2\eta_i.
\]
For $j\ne i$, since $\theta_i\leq1$,
\[
d_j\cdot e_i\leq1-\theta_i^2/2,\qquad
\delta-d_j\cdot e_i\geq\theta_i^2/4.
\]
The first inequality follows separately when $e_j\cdot e_i$ is
nonnegative or negative. These estimates prove
\eqref{eq:other-shifts}; the identity for $\nu_i$ follows directly
from \eqref{eq:multi-map}.

At $y=\gamma_i+b_iY$,
\[
1+d_j\cdot y=b_i(1+d_j\cdot Y-\nu_j).
\]
The minimum of the second factor on $\overline B$ is
$(1-q)\eta_i>0$ for $j=i$, and $\eta_j+|\nu_j|>0$ otherwise.
Thus all profiles are smooth there, and \eqref{eq:exact-transport}
gives \eqref{eq:multi-transport}.
Finally $\delta-\delta_0\geq(1-\delta_0)/2$ and
$\xi_i\leq8\varepsilon_g$. The vertex \eqref{eq:multi-vertex}
lies strictly inside \eqref{eq:nc} and has positive time for
uniformly large $s$. Its remaining time at $t=T_0-e^{-s}$ is
$b_i e^{-s}>0$. This proves admissibility.
\end{proof}

\begin{lemma}[Uniform estimates for negative shifts]\label{lem:negative-shift}
For $\eta=1-|d|<1/2$, $v\geq0$, and
$r'=2(N+1)/(N+3)$,
\begin{align}
\|\mathbf W_{d,-v}\|_{\X_1}^2
&\leq C\eta^\mu,\label{eq:negative-X}\\
\|W_{d,-v}^p\|_{L^{r'}(B)}
&\leq C\eta^{(k+2)/2}
(\eta+v)^{-(\beta+2)/2}.
\label{eq:negative-dual}
\end{align}
The constants depend only on $N,p$.
\end{lemma}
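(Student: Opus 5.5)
The plan is to reduce everything to one-dimensional moment integrals, exactly as in Lemmas~\ref{lem:moments} and \ref{lem:profile-size}, and to keep the dependence on $v$ explicit. Put $\rho=|d|$ and $U=1+d\cdot y+v$. On $\overline B$ we have $U\geq\eta+v>0$, so $W_{d,-v}$ is smooth. By rotation, every integrand reduces through \eqref{eq:moment-integral} to
\[
J_j(\alpha')=K_{\alpha'}\int_{-1}^1(1-t^2)^{\alpha'}(1+\rho t+v)^{-j}\,dt,
\]
with $\alpha'=a$ or $a+1$. The substitution $t=-1+\eta x$ gives
\[
1+\rho t+v=\eta+v+\rho\eta x\geq \eta+v+\tfrac12\eta x .
\]

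The core estimate I would establish is the following: for $j>\alpha'+1$,
\[
J_j(\alpha')\leq C\,\eta^{\alpha'+1}(\eta+v)^{-j}\int_0^{\infty}x^{\alpha'}\bigl(1+\tfrac{\eta x}{2(\eta+v)}\bigr)^{-j}dx\leq C\,(\eta+v)^{\alpha'+1-j}.
\]
The second step rescales $x=(\eta+v)x'/\eta$. The factor $(2-\eta x)^{\alpha'}$ is bounded by $2^{\alpha'}$, and the upper limit $2/\eta$ is simply dropped. This bound is uniform in $v\geq0$.

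For \eqref{eq:negative-dual} I would apply the core estimate with $j=(k+2)r'>a+1$, which was checked in Lemma~\ref{lem:dual}, and with $c_d^p\asymp\eta^{(k+2)/2}$. This gives
\[
\|W_{d,-v}^p\|_{r'}\leq C\eta^{(k+2)/2}(\eta+v)^{(a+1)/r'-(k+2)}.
\]
Since $(a+1)/r'=(N+3)/4$, the exponent is
\[
\frac{N+3}{4}-k-2=-\frac{k+2}{2}+\frac{N+3}{4}-\frac{k+2}{2}.
\]
I then compare this with the target $-(\beta+2)/2$ using $\beta=2k-a$. That amounts to checking $(N+3)/4-k-2\leq-(\beta+2)/2$, i.e.\ $(N+3)/4\leq k-\beta/2+1=(a+2)/2$. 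Since $(N+3)/4=(a+2)/2$, this holds with equality. Because $\eta+v\geq\eta$, any slack in this exponent only helps.

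For \eqref{eq:negative-X} there are three terms to bound.
\begin{itemize}
\item The degenerate gradient term is $k^2c_d^2\rho^2J_{2k+2}(a+1)\leq C\eta^k(\eta+v)^{a-2k}\leq C\eta^{k+a-2k}=C\eta^\mu$. This uses $a-2k<0$ and $\eta+v\geq\eta$.
\item The velocity term is $Z_{d,-v}=-kc_dvU^{-k-1}$. Its square integrates to $k^2c_d^2v^2J_{2k+2}(a)\leq C\eta^kv^2(\eta+v)^{a-1-2k}\leq C\eta^k(\eta+v)^{a+1-2k}$. This is at most $C\eta^\mu$ when $a+1-2k\leq0$.
\item The mass term is $c_d^2J_{2k}(a)$. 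It is handled like \eqref{eq:moment-zero}.
\end{itemize}

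The main obstacle is the velocity and mass terms when $j=2k$ or $2k+2$ with $2k\leq a+1$, where the core estimate does not apply. In that regime I would bound the integrals crudely. For $v\leq1$ the mass integral is at most $C\max\{1,\log(1/\eta),\eta^{a+1-2k}\}$, and $\eta^k$ times this is $O(\eta^\mu)$ because $k>\mu$. For $v\geq1$ one has $U\geq v$, so the pointwise bounds $|W_{d,-v}|,|Z_{d,-v}|\leq C\eta^{k/2}$ hold and give $O(\eta^k)\leq O(\eta^\mu)$. The velocity term in the case $a+1-2k>0$ splits the same way at $v=1$. All constants depend only on $N,p$, since $\eta<1/2$ keeps $\rho\geq1/2$.
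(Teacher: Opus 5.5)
Your proof is correct. For \eqref{eq:negative-dual} it is the paper's argument. The moment bound $M_\ell(d,-v)\le C(\eta+v)^{a+1-\ell}$ for $\ell>a+1$ is the same in both; your rescaled $x$-integral is the paper's $z=1+e\cdot y$ integral with $z=\eta x$. It is applied with $\ell=(k+2)r'$ and closed by the identity $(a+1)/r'-(k+2)=-(\beta+2)/2$. Your remark that ``slack only helps'' points the wrong way when $\eta+v<1$, but it is moot because you verified equality.

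For \eqref{eq:negative-X} you take a more computational route. The paper never tracks $v$. Since $U=1+d\cdot y+v\ge 1+d\cdot y$, the mass and degenerate-gradient densities of $W_{d,-v}$ are pointwise dominated by those of $W_d$. Moreover $|Z_{d,-v}|=\frac{kv}{U}W_{d,-v}\le kW_{d,-v}$, so $\|\mathbf W_{d,-v}\|_{\X_1}\le C\|(W_d,0)\|_{\X_1}$. Lemma~\ref{lem:profile-size} at $q=0$ then gives $C\eta^\mu$ for small $\eta$, and compactness covers the rest of $(0,1/2)$.

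Your direct moment estimates buy two things. You get explicit constants on all of $\eta<1/2$ without the compactness step. You also get a $v$-decaying bound $C\eta^k(\eta+v)^{a-2k}$ for the degenerate gradient. The price is the case analysis. Your velocity bound $C\eta^k(\eta+v)^{a+1-2k}$ grows in $v$ when $2k<a+1$, which forces the split at $v=1$. Your core estimate does apply to that term, since $2k+2>a+1$ always; only its output is too weak. The pointwise inequality $|Z_{d,-v}|\le kW_{d,-v}$ would remove that split entirely. For the mass term the split is not needed at all: your ``$v\le1$'' bound is just the $v=0$ moment, which dominates for every $v\ge0$.
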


\begin{proof}
The denominator is increased by $v$, so the mass and degenerate
gradient of $W_{d,-v}$ are bounded by those of $W_d$.
Moreover
\[
|Z_{d,-v}|=\frac{kv}{1+d\cdot y+v}W_{d,-v}
\leq kW_{d,-v}.
\]
Lemma~\ref{lem:profile-size} proves \eqref{eq:negative-X} for small
$\eta$. Enlarging the constant on the remaining compact parameter
range gives the claim for every $0<\eta<1/2$.
For any $\ell>a+1$, rotation and $z=1+e\cdot y$ give
\[
M_\ell(d,-v)
\leq C\int_0^2\frac{z^a}{(\eta+v+z/2)^\ell}\,dz
\leq C_\ell(\eta+v)^{a+1-\ell}.
\]
Apply this with $\ell=(k+2)r'$ as in Lemma~\ref{lem:dual}.
The identity $(a+1)/r'-(k+2)=-(\beta+2)/2$
gives \eqref{eq:negative-dual}.
\end{proof}

\begin{lemma}[Dominance on a cap]\label{lem:cap}
Let $\Gamma_i=\{Y\in B:0<1+e_i\cdot Y<\eta_i\}$ and $h=1-q$.
For $q\in[1/2,1)$ and $\eta_i\leq h$,
\begin{equation}\label{eq:positive-X-h}
\|\mathbf W_{d_i,q\eta_i}\|_{\X_1}^2
\leq C\eta_i^\mu h^{-\beta}.
\end{equation}
Without the restriction $\eta_i\leq h$,
\begin{equation}\label{eq:cap-potential}
\frac1{p+1}\int_{\Gamma_i}W_{d_i,q\eta_i}^{p+1}\,dY
\geq c\eta_i^\mu h^{-\beta-1}.
\end{equation}
The constants depend only on $N,p$. Under \eqref{eq:isolation},
if $\varepsilon_g$ is sufficiently small in terms of $N,p,K$,
\begin{equation}\label{eq:cap-dominance}
\left|\sum_j\sigma_j\lambda_jW_{d_j,\nu_j}(Y)\right|
\geq\tfrac12\lambda_iW_{d_i,q\eta_i}(Y)
\qquad(Y\in\Gamma_i).
\end{equation}
\end{lemma}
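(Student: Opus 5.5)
We prove the three assertions separately, all by the reduction to one-dimensional integrals used in Lemma~\ref{lem:moments} and Lemma~\ref{lem:profile-size}. After a rotation put $e_i=e_1$ and $z=1+Y_1\in(0,2)$. With $U=z-(1-q)\eta_i+\eta_i(\ldots)$, more precisely $1+d_i\cdot Y-q\eta_i=\rho_i z+(1-\rho_i)-q\eta_i=\rho_iz+h\eta_i$, the shifted profile is $c_{d_i}(\rho_iz+h\eta_i)^{-k}$ with $c_{d_i}^2\asymp\eta_i^k$. The transverse slice at height $z$ has measure $\asymp z^a(2-z)^a$.

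\emph{Step 1: the bound \eqref{eq:positive-X-h}.} Follow the proof of Lemma~\ref{lem:profile-size}, but keep track of $h$ instead of treating $q$ in a compact set.
\begin{itemize}
\item The degenerate gradient contributes $c^2\int_0^2 z^{a+1}(\rho z+h\eta)^{-2k-2}\,dz$. Substituting $z=h\eta x$ gives $\eta^k(h\eta)^{a+2-2k-2}\int x^{a+1}(x+1)^{-2k-2}\,dx$. The last integral is finite since $2k>a$, so the contribution is $\asymp\eta^{\mu}h^{-\beta}$.
\item The velocity term is $k^2q^2\eta^2c^2M_{2k+2}\asymp\eta^{k+2}(h\eta)^{a-1-2k}=\eta^\mu\,\eta h^{-\beta-1}$. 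It is $\leq \eta^\mu h^{-\beta}$ precisely when $\eta_i\leq h$, which is where that hypothesis enters.
\item The mass term $c^2M_{2k}$ is handled the same way. It is at most $\eta^\mu h^{-\beta+1}$, up to the logarithmic and bounded cases of \eqref{eq:moment-zero}, and these are all $\lesssim\eta^\mu h^{-\beta}$ since $h\leq1/2$.
\end{itemize}

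\emph{Step 2: the lower bound \eqref{eq:cap-potential}.} On $\Gamma_i$ we have $z<\eta_i$, hence $\rho z+h\eta\leq(1+h)\eta\leq2\eta$. Since $W^{p+1}=c^{p+1}U^{-k-2}$, this gives
\[
\int_{\Gamma_i}W^{p+1}\gtrsim \eta^{k+2}\int_0^{\eta} z^a(z+h\eta)^{-2k-2}\,dz .
\]
Substituting $z=h\eta x$, the integral becomes $(h\eta)^{a+1-2k-2}\int_0^{1/h}x^a(1+x)^{-2k-2}\,dx$. The limit $1/h\geq2$ leaves a fixed positive lower bound. The result is $\gtrsim\eta^{k+2+a-1-2k}h^{-\beta-1}=\eta^\mu h^{-\beta-1}$, using $a-2k-1=-\beta-1$ and $k+2+a-1-2k=\mu+1$. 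Redoing the exponent bookkeeping with $c^{p+1}\asymp\eta^{k+1}$ instead of $\eta^{k+2}$ gives exactly $\eta^\mu h^{-\beta-1}$; I expect the count to close this way, as in $P_d$ of \eqref{eq:P-limit}. No condition $\eta\leq h$ is needed here.

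\emph{Step 3: dominance \eqref{eq:cap-dominance}, the main obstacle.} On $\Gamma_i$, bound each $j\neq i$ term against the $i$-th one, $\lambda_iW_{d_i,q\eta_i}\geq c\lambda_ic_{d_i}(2\eta_i)^{-k}\gtrsim\lambda_i\eta_i^{-k/2}$.
\begin{itemize}
\item For $Y\in\Gamma_i$ we have $Y=-e_i+O(\sqrt{\eta_i})$ transversally. Hence $1+d_j\cdot Y-\nu_j\geq\eta_j+(1-e_j\cdot e_i)-C\sqrt{\eta_i}|e_i-e_j|\gtrsim\eta_j+|e_i-e_j|^2$, using the separation \eqref{eq:isolation}: $\omega_{ij}\geq\eta_i$ gives $\sqrt{\eta_i}\leq\sqrt{\varepsilon_g}|e_i-e_j|$. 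Together with $-\nu_j\geq0$ from \eqref{eq:other-shifts}, this yields $\lambda_jW_{d_j,\nu_j}\lesssim\lambda_j\eta_j^{k/2}(\eta_j+|e_i-e_j|^2)^{-k}$.
\item Insert $\lambda_j=t_j\lambda_i(\eta_i/\eta_j)^{\mu/2}$ from \eqref{eq:amplitude-comparison}. The ratio to the $i$-th term is then $\lesssim t_j\,\eta_i^{(k+\mu)/2}\eta_j^{(k-\mu)/2}(\eta_j+|e_i-e_j|^2)^{-k}$.
\item Bound this by $t_j\,(\omega_{ij}/|e_i-e_j|^2)^{k}$, where $\omega_{ij}^{2k}\geq\eta_i^{a}\eta_j^{\beta}$ controls the numerator; the exponents match by $a+\beta=2k$ and $(k\pm\mu)/2$ versus $a,\beta$ up to the factor $\eta_j^{\cdot}$ absorbed by $(\eta_j+|e_i-e_j|^2)$. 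This gives $\lesssim t_j\varepsilon_g^{k}$, say $\lesssim t_j\varepsilon_g^{\gamma}$ for some $\gamma>0$.
\item Summing with $\sum t_j\leq K$, the other terms total at most $CK\varepsilon_g^{\gamma}$ times the $i$-th term. Choosing $\varepsilon_g$ small in terms of $N,p,K$ makes this at most $\tfrac12$, and the reverse triangle inequality gives \eqref{eq:cap-dominance}.
\end{itemize}
Checking that the exponent arithmetic matches the definition of $\omega_{ij}$ in \eqref{eq:omega} is where I expect the real work to lie, in particular the interpolation between the two regimes $\eta_j\lessgtr|e_i-e_j|^2$.
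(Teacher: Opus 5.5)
Your proposal is correct and follows essentially the same route as the paper. Both reduce to $z=1+e_i\cdot Y$ with denominator $\rho_i z+h\eta_i$, both for the $\X_1$ bound (where $\eta_i\le h$ enters only through the velocity term) and for the cap potential, and both use the pointwise comparison $\lambda_jW_{d_j,\nu_j}/(\lambda_iW_{d_i,q\eta_i})\lesssim t_j\,\eta_i^{a/2}\eta_j^{\beta/2}|e_i-e_j|^{-2k}\le t_j\varepsilon_g^k$ for the dominance.

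The exponent check you flag as the real work is immediate: $k+\mu=a$ and $k-\mu=\beta$ exactly, so you can discard $\eta_j$ from the denominator with no interpolation between regimes. In Step 2 the correct bookkeeping is $W^{p+1}=c_{d_i}^{p+1}U^{-2k-2}$ with $c_{d_i}^{p+1}\asymp\eta_i^{k+1}$, as you eventually note. The paper also avoids your case analysis for the mass term via $M_{2k}\le 2M_{2k+1}$, since $U\le 2$.
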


\begin{proof}
Rotate so that $e_i$ is the first coordinate vector.
Put $z=1+e_i\cdot Y$ and $L=\eta_i h$; the denominator is
$L+\rho_i z$, with $\rho_i\geq1/2$.
The mass is bounded using
$M_{2k}\leq2M_{2k+1}\leq C L^{-\beta}$.
For the degenerate gradient, use $1-(e_i\cdot Y)^2\leq2z$:
\[
c_{d_i}^2\int_0^2
\frac{z^{a+1}}{(L+\rho_i z)^{2k+2}}\,dz
\leq C\eta_i^k L^{-\beta}
=C\eta_i^\mu h^{-\beta}.
\]
Here $2k>a$ ensures convergence after extending the integral to
infinity. The squared velocity is at most
\[
C\eta_i^{k+2}L^{-\beta-1}
=C\eta_i^{\mu+1}h^{-\beta-1}.
\]
It is bounded by the preceding scale when $\eta_i\leq h$.
This proves \eqref{eq:positive-X-h}.

For the potential, substitute $z=\eta_i x$ on the cap.
The identity $c_{d_i}^{p+1}\asymp\eta_i^{k+1}$ and the
cross-sectional volume give
\[
\frac1{p+1}\int_{\Gamma_i}W_{d_i,q\eta_i}^{p+1}\,dY
\geq c\eta_i^\mu
\int_0^1\frac{x^a}{(h+x)^{2k+2}}\,dx.
\]
Restrict the last integral to $0<x<h$ and rescale $x=hX$.
This proves \eqref{eq:cap-potential}, uniformly for $0<h\leq1/2$.

For $j\ne i$, write $r_{ij}=|e_i-e_j|$ and
$Y=(-1+z)e_i+Y_\perp$. On the cap, $0<z<\eta_i$ and
$|Y_\perp|\leq\sqrt{2z}$. The component of $e_j$ orthogonal to
$e_i$ has length at most $r_{ij}$, so
\[
1+e_j\cdot Y\geq\frac{r_{ij}^2}{2}-z-r_{ij}\sqrt{2z}
\geq r_{ij}^2\left(\frac12-\varepsilon_g-
\sqrt{2\varepsilon_g}\right)\geq\frac{r_{ij}^2}{4}.
\]
Here we used $\eta_i\leq\varepsilon_g r_{ij}^2$ and reduced
$\varepsilon_g$. Since $\rho_j\geq1/2$ and $\nu_j<0$,
\[
W_{d_j,\nu_j}\leq C\eta_j^{k/2}r_{ij}^{-2k},
\qquad W_{d_i,q\eta_i}\geq c\eta_i^{-k/2}
\quad\hbox{on }\Gamma_i.
\]
Consequently, by \eqref{eq:amplitude-comparison},
\[
\frac{\lambda_jW_{d_j,\nu_j}}
{\lambda_iW_{d_i,q\eta_i}}
\leq C t_j\frac{\eta_i^{a/2}\eta_j^{\beta/2}}{r_{ij}^{2k}}
\leq C t_j\varepsilon_g^k.
\]
Summing and using $\sum_jt_j\leq K$ gives a bound
$CK\varepsilon_g^k$. Choose it at most $1/2$.
The triangle inequality proves \eqref{eq:cap-dominance}, independently
of the signs and of coincidences among the remaining directions.
\end{proof}

\begin{proposition}[A negative neighbourhood of the sum]
\label{prop:multi-negative}
There exist $q\in[1/2,1)$ and
$\eta_0,\varepsilon_g,\epsilon_*,b_*>0$, depending only on
$N,p,m,K,\delta_0$, such that the following holds.
For a family satisfying \eqref{eq:family}, \eqref{eq:dominant} and
\eqref{eq:isolation}, set
\[
\mathbf S=(S_1,S_2)=\sum_j\sigma_j\lambda_j\mathbf W_{d_j,\nu_j}.
\]
If $(v,z)\in H^1(B)\times L^2(B)$ and
\begin{equation}\label{eq:multi-small}
\|(v,z)-\mathbf S\|_{\X_1}
\leq\epsilon_*\lambda_i\eta_i^{\mu/2},
\end{equation}
then
\begin{equation}\label{eq:multi-negative}
H[v,z]\leq-b_*\lambda_i^{p+1}\eta_i^\mu<0.
\end{equation}
\end{proposition}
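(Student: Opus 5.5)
The strategy is to apply Lemma~\ref{lem:upper-perturbation} with $\mathbf S$ the full transported sum, so that $H[v,z]$ is reduced to $H[\mathbf S]$ plus errors. The sign of $H[\mathbf S]$ will then come from a splitting: the quadratic part $\mathcal Q[\mathbf S]$ is bounded above by the $\X_1$ norm, and the potential is bounded below by its contribution on the cap $\Gamma_i$. The perturbation step gives
\[
H[v,z]\le H[\mathbf S]+C\bigl[(\|\mathbf S\|_{\X_1}+\||S_1|^{p-1}S_1\|_{r'})D+D^2\bigr],\qquad D\le\epsilon_*\lambda_i\eta_i^{\mu/2},
\]
and it remains to show $H[\mathbf S]\le-2b_*\lambda_i^{p+1}\eta_i^\mu$, with the coefficients of $D$ bounded by $C(\lambda_i+\lambda_i^p)\eta_i^{\mu/2}$.

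\textbf{Norm bounds.} By the triangle inequality, \eqref{eq:positive-X-h} for $j=i$ with $h=1-q$ fixed, and \eqref{eq:negative-X} for $j\ne i$,
\[
\|\mathbf S\|_{\X_1}\le C_h\lambda_i\eta_i^{\mu/2}+C\sum_{j\ne i}\lambda_j\eta_j^{\mu/2}\le C_hK\lambda_i\eta_i^{\mu/2},
\]
since $\lambda_j\eta_j^{\mu/2}=t_j\lambda_i\eta_i^{\mu/2}$ and $\sum_j t_j\le K$. Hence $\mathcal Q[\mathbf S]\le C_hK^2\lambda_i^2\eta_i^\mu$. For the dual norm I would use $|S_1|^{p}\le C_p\sum_j$-type bounds in the form $\||S_1|^p\|_{r'}\le\bigl(\sum_j\lambda_j\|W_{d_j,\nu_j}^p\|_{r'}^{1/p}\bigr)^p$, i.e.\ Minkowski in $L^{pr'}$. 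For $j=i$, Lemma~\ref{lem:dual} (with $K=\{q\}$) gives $\|W_{d_i,q\eta_i}\|_{pr'}\le C\eta_i^{\mu/(2p)}$. For $j\ne i$, \eqref{eq:negative-dual} with $v=|\nu_j|\ge\eta_i/2$ gives
\[
\lambda_j\|W_{d_j,\nu_j}\|_{pr'}\le C\lambda_j\eta_j^{(k+2)/(2p)}(\eta_i+\eta_j)^{-(\beta+2)/(2p)}.
\]
Inserting $\lambda_j=t_j\lambda_i(\eta_i/\eta_j)^{\mu/2}$ and using $(k+2)/p-\mu$ versus $(\beta+2)/p$, I expect this to be bounded by $Ct_j^{\,\theta}\lambda_i\eta_i^{\mu/(2p)}$ for some power $\theta>0$. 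This requires checking the exponent arithmetic: $k+2=kp$ gives $(k+2)/p=k$, and $\mu+\beta=k$, which makes the powers of $\eta_j$ and $\eta_i$ balance when $\eta_j\ge\eta_i$. When $\eta_j<\eta_i$, the factor $\eta_j^{\cdot}$ has a nonnegative exponent and the bound improves. I expect this balancing to be the most delicate bookkeeping, and would handle it with a two-case split on $\eta_j\lessgtr\eta_i$. The outcome is $\||S_1|^{p-1}S_1\|_{r'}\le C_K\lambda_i^p\eta_i^{\mu/2}$, where $\sum t_j\le K$ controls the sum. If only $t_j^{1/p}$ summability appears, I would fall back on the bound $t_j\le1$ combined with a better power of $t_j$ obtained from the $\eta$ decay.

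\textbf{Potential lower bound and choice of $q$.} By \eqref{eq:cap-dominance},
\[
\frac1{p+1}\int_B|S_1|^{p+1}\ge 2^{-p-1}\lambda_i^{p+1}\,\frac1{p+1}\int_{\Gamma_i}W_{d_i,q\eta_i}^{p+1}\ge c\,2^{-p-1}\lambda_i^{p+1}\eta_i^\mu h^{-\beta-1},
\]
using \eqref{eq:cap-potential}. Therefore
\[
H[\mathbf S]\le\lambda_i^{p+1}\eta_i^\mu\bigl[C_K\lambda_i^{1-p}h^{-\beta}-c'h^{-\beta-1}\bigr]\le\lambda_i^{p+1}\eta_i^\mu h^{-\beta}\bigl[C_Km^{1-p}-c'h^{-1}\bigr].
\]
This is the main obstacle: the constant $C_K$ in the quadratic bound must be tracked as $C K^2h^{-\beta}$ explicitly, so that the extra factor $h^{-1}$ wins. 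For this I need \eqref{eq:positive-X-h} for the $i$ term, while the $j\ne i$ terms contribute $CK^2$ without $h$-dependence, and these are also beaten. So $h=1-q$ is chosen small depending on $m,K$, then $\eta_0\le h$ so that \eqref{eq:positive-X-h} applies, and then $\varepsilon_g$ is fixed via Lemma~\ref{lem:cap}. This yields $H[\mathbf S]\le-2b_*\lambda_i^{p+1}\eta_i^\mu$. The perturbation errors are at most $C_{h,K}(\epsilon_*+\epsilon_*^2)\lambda_i^{p+1}\eta_i^\mu$ after using $\lambda_i\ge m$. Choosing $\epsilon_*$ small then gives \eqref{eq:multi-negative}.
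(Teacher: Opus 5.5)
Your proposal is correct and follows the paper's proof essentially step for step: you bound $\mathcal Q[\mathbf S]$ by the triangle inequality with \eqref{eq:positive-X-h} and \eqref{eq:negative-X}, use the cap lower bound so that choosing $1-q$ small wins, apply Minkowski in $L^{pr'}$ to the dual term, and absorb the errors via Lemma~\ref{lem:upper-perturbation}. The exponent bookkeeping you flag comes out with $\theta=1$, so no fallback is needed: writing $r_j=\eta_j/\eta_i$, each $j\ne i$ term is at most $Ct_j\lambda_i\eta_i^{\mu/(2p)}r_j^{\beta/2}(1+r_j)^{-(\beta+2)/(2p)}$, and the last factor is at most one because $(p-1)\beta-2=-(p-1)\mu<0$.
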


\begin{proof}
First choose $\varepsilon_g$ for the geometric and cap lemmas.
The triangle inequality for the norm induced by the positive
quadratic form $\mathcal Q$ gives
\[
\mathcal Q[\mathbf S]^{1/2}
\leq\sum_j\lambda_j\mathcal Q[\mathbf W_{d_j,\nu_j}]^{1/2}
\leq CK(1-q)^{-\beta/2}\sqrt{E_i},
\]
provided $\eta_i\leq1-q$. We used \eqref{eq:Q-equivalence},
\eqref{eq:negative-X}, \eqref{eq:positive-X-h}, and
\eqref{eq:dominant}. The cap gives potential at least
$c\lambda_i^{p+1}\eta_i^\mu(1-q)^{-\beta-1}$.
Therefore
\[
H[\mathbf S]\leq\lambda_i^{p+1}\eta_i^\mu(1-q)^{-\beta-1}
\left[CK^2m^{1-p}(1-q)-c\right].
\]
Choose $q$ close enough to one to make the bracket at most $-c/2$,
and then choose $\eta_0\leq1-q$ sufficiently small. Thus
\begin{equation}\label{eq:multi-model-negative}
H[\mathbf S]\leq-2b_*\lambda_i^{p+1}\eta_i^\mu,
\qquad \|\mathbf S\|_{\X_1}\leq C_{q,K}\lambda_i\eta_i^{\mu/2}.
\end{equation}

For the nonlinear first variation, Lemma~\ref{lem:dual} handles the
selected term. For $j\ne i$, the lower bound
$-\nu_j\geq\eta_i/2$ and Lemma~\ref{lem:negative-shift} give
\[
\lambda_j^p\|W_{d_j,\nu_j}^p\|_{r'}
\leq C t_j^p\lambda_i^p\eta_i^{\mu/2}
\frac{r_j^{p\beta/2}}{(1+r_j)^{(\beta+2)/2}},
\qquad r_j=\eta_j/\eta_i.
\]
The fraction is at most one, since $p\beta>0$ and
$p\beta-(\beta+2)=-\mu(p-1)<0$.
Taking $p$-th roots and applying the triangle inequality in
$L^{pr'}$ yields
\[
\|S_1\|_{pr'}\leq\sum_j\lambda_j\|W_{d_j,\nu_j}\|_{pr'}
\leq C_q\lambda_i\eta_i^{\mu/(2p)}\sum_j t_j.
\]
Hence, independently of $J$,
\begin{equation}\label{eq:multi-dual}
\||S_1|^{p-1}S_1\|_{r'}
\leq C_{q,K}\lambda_i^p\eta_i^{\mu/2}.
\end{equation}

Write $(f,g)=(v,z)-\mathbf S$ and $D=\|(f,g)\|_{\X_1}$.
Lemma~\ref{lem:upper-perturbation} gives
\[
H[v,z]\leq H[\mathbf S]+C_{q,K}
\left[(\lambda_i+\lambda_i^p)\eta_i^{\mu/2}D+D^2\right].
\]
By \eqref{eq:multi-small} and $\lambda_i\geq m$, the error is at
most $C_{m,q,K}(\epsilon_*+\epsilon_*^2)
\lambda_i^{p+1}\eta_i^\mu$. Choose $\epsilon_*$ small enough to
absorb it into \eqref{eq:multi-model-negative}.
\end{proof}

\begin{theorem}[Exclusion by an isolated dominant profile]\label{thm:multi}
Fix $m>0$, $K\geq1$ and $\delta_0\in(0,1)$, and take the constants
of Proposition~\ref{prop:multi-negative}. Every solution satisfying
\eqref{eq:nc} has the following property at every sufficiently large
$s$. For every family and selected index satisfying
\eqref{eq:family}, \eqref{eq:dominant} and \eqref{eq:isolation},
\begin{equation}\label{eq:unconditional-multi-gap}
\left\|\mathcal T_{b_i,\gamma_i}(w(s),w_s(s))
-\sum_j\sigma_j\lambda_j\mathbf W_{d_j,\nu_j}\right\|_{\X_1}
\geq\epsilon_*\lambda_i\eta_i^{\mu/2}.
\end{equation}
The constants are uniform in the cardinality, signs, amplitudes,
ratios of scales and angles, subject to the stated hypotheses.
Only the selected amplitude must exceed $m$.
The large-time threshold may depend on the solution and its cone.
\end{theorem}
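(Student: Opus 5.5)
The argument is by contradiction and mirrors the proof of Theorem~\ref{thm:exclusion}, with Proposition~\ref{prop:multi-negative} in place of Proposition~\ref{prop:perturbative}. Fix $m,K,\delta_0$ and take $q,\eta_0,\varepsilon_g,\epsilon_*,b_*$ from Proposition~\ref{prop:multi-negative}. These constants depend only on $N,p,m,K,\delta_0$ and not on the family.

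First, choose the large-time threshold. By Lemma~\ref{lem:multi-geometry}, $\xi_i\leq8\eta_i/\theta_i^2\leq8\varepsilon_g$ and $\delta-\delta_0\geq(1-\delta_0)/2$, uniformly over all admissible families. The set of possible vertices \eqref{eq:multi-vertex} at time $s$ is therefore contained in
\[
\{(x_0-\xi e^{-s}e,\;T_0-\delta'\xi e^{-s}):\ 0\leq\xi\leq8\varepsilon_g,\ |e|=1,\ \delta'\geq(1+\delta_0)/2\}.
\]
This set lies strictly inside the cone \eqref{eq:nc} with positive time once $e^{-s}$ is small compared with $T_0-t_0$ and $T_0$. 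Hence there is $s_*$, depending only on the solution and its cone, such that for $s\geq s_*$ every such vertex is admissible. By Lemma~\ref{lem:vertex-identity}, at the physical time $t=T_0-e^{-s}$ the similarity data of the solution at that vertex are $(\widehat w,\widehat w_{\widehat s})=\mathcal T_{b_i,\gamma_i}(w(s),w_s(s))$ on $B$. These data are defined because $D_i\subset B_{1+2\eta_i}$, which lies inside the domain guaranteed by \eqref{eq:nc} since $2\eta_i\leq2\varepsilon_g<\delta_0^{-1}-1$ after shrinking $\varepsilon_g$. They are energy-class data in $H^1(B)\times L^2(B)$.

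Next, suppose that \eqref{eq:unconditional-multi-gap} fails for some family at some $s\geq s_*$. Then $(v,z)=(\widehat w,\widehat w_{\widehat s})$ satisfies \eqref{eq:multi-small} with $\mathbf S=\sum_j\sigma_j\lambda_j\mathbf W_{d_j,\nu_j}$. Proposition~\ref{prop:multi-negative} then gives $H[\widehat w,\widehat w_{\widehat s}]\leq-b_*\lambda_i^{p+1}\eta_i^\mu<0$. This contradicts the nonnegativity \eqref{eq:nonnegative} at the admissible vertex, so \eqref{eq:unconditional-multi-gap} holds.

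Unlike Theorem~\ref{thm:exclusion}, no transport estimate (Lemma~\ref{lem:transport}) is needed, because the gap is stated directly in the new frame. Identity \eqref{eq:multi-transport} is needed only to interpret $\mathbf S$ as the transported sum. Uniformity in the cardinality, signs, amplitudes and angles is inherited from Proposition~\ref{prop:multi-negative}, because the threshold $s_*$ was chosen independently of the family.

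The main obstacle I expect is making this uniform admissibility rigorous. One has to check that the bound on $\xi_i$ and the lower bound on $\delta-\delta_0$ really place every vertex, for every family simultaneously, strictly inside \eqref{eq:nc}. One also has to check that the observation region stays within the controlled ball, so that the energy-class sign statement applies without any trace assumption. The remaining steps are bookkeeping.
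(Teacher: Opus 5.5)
Your proposal is correct and follows the paper's own proof. You use Lemma~\ref{lem:multi-geometry} to identify the transported data with the similarity data at a vertex that is admissible uniformly over all families; then a failure of the gap would, by Proposition~\ref{prop:multi-negative}, force negative energy, contradicting \eqref{eq:nonnegative}. One minor remark: you do not need to shrink $\varepsilon_g$ for the collar, since \eqref{eq:isolation} already gives $2\eta_i\leq4\varepsilon_g(1-\delta_0)<\delta_0^{-1}-1$ for the small $\varepsilon_g$ fixed in Lemma~\ref{lem:multi-geometry}.
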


\begin{proof}
Lemma~\ref{lem:multi-geometry} identifies the transported actual data
with the similarity data at the admissible vertex
\eqref{eq:multi-vertex}. They belong to $H^1(B)\times L^2(B)$.
If \eqref{eq:unconditional-multi-gap} failed,
Proposition~\ref{prop:multi-negative} would make their energy
strictly negative, contradicting \eqref{eq:nonnegative}.
All choices are uniform under the stated hypotheses.
\end{proof}

\begin{corollary}[Coalescing directions]\label{cor:coalescing}
Fix $J_0\geq1$, $m>0$ and $\delta_0\in(0,1)$.
There are constants as in Theorem~\ref{thm:multi} such that its
conclusion holds for $J\leq J_0$, any maximizing index with
$\lambda_i\geq m$, and every family with $\eta_j<\eta_0$ satisfying
\begin{equation}\label{eq:intro-angular}
\max_j\eta_j\leq\varepsilon_g
\min\left\{1,\,2(1-\delta_0),\,
\min_{j\ne\ell}|e_j-e_\ell|^2\right\},
\end{equation}
with the sufficiently small constant $\varepsilon_g$ of that theorem.
\end{corollary}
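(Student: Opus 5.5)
The corollary is a direct specialization of Theorem~\ref{thm:multi}. The plan is to check that, for $J\leq J_0$ and a maximizing index, the hypotheses \eqref{eq:family}, \eqref{eq:dominant} and \eqref{eq:isolation} all hold with $K=J_0$. The constants of Theorem~\ref{thm:multi} are then invoked with $K=J_0$, so they depend only on $N,p,m,J_0,\delta_0$.

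\emph{Condition \eqref{eq:family}.} It is part of the assumption $\eta_j<\eta_0$, where $\eta_0$ is the constant of Proposition~\ref{prop:multi-negative} with $K=J_0$.

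\emph{Condition \eqref{eq:dominant}.} Let $i$ be a maximizing index, so $E_i=\max_jE_j$, and assume $\lambda_i\geq m$. Then $\sqrt{E_j}\leq\sqrt{E_i}$ for every $j$, and hence
\[
\sum_{j=1}^J\sqrt{E_j}\leq J\sqrt{E_i}\leq J_0\sqrt{E_i}.
\]
This is the observation already recorded after \eqref{eq:amplitude-comparison}.

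\emph{Condition \eqref{eq:isolation}.} This is the only step needing a computation. The first part, $\eta_i\leq\varepsilon_g\min\{1,2(1-\delta_0)\}$, follows immediately from \eqref{eq:intro-angular}. For the second part we must show $\omega_{ij}\leq\varepsilon_g|e_i-e_j|^2$ for $j\ne i$. Put $\eta_{\max}=\max_\ell\eta_\ell$. Since $\eta_i\leq\eta_{\max}$ and $\eta_j\leq\eta_{\max}$, and the exponents satisfy $a/(2k)+\beta/(2k)=1$, we get
\[
\eta_i^{a/(2k)}\eta_j^{\beta/(2k)}\leq\eta_{\max}.
\]
Therefore $\omega_{ij}\leq\eta_{\max}$. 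By \eqref{eq:intro-angular}, $\eta_{\max}\leq\varepsilon_g\min_{j\ne\ell}|e_j-e_\ell|^2\leq\varepsilon_g|e_i-e_j|^2$, which gives \eqref{eq:isolation}.

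\emph{Degenerate cases.} If $J=1$, the minimum over $j\ne\ell$ is $+\infty$ and the isolation condition is vacuous. If two directions coincide, \eqref{eq:intro-angular} would force $\eta_{\max}\leq0$, which is impossible; so no admissible family of that kind exists and nothing needs checking.

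With all three hypotheses verified, Theorem~\ref{thm:multi} applies and yields \eqref{eq:unconditional-multi-gap} at every sufficiently large $s$, uniformly over such families. There is no real obstacle; the step requiring care is the geometric-mean bound for $\omega_{ij}$ above.
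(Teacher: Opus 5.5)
Your proposal is correct and follows the paper's proof: you take $K=J_0$, obtain \eqref{eq:dominant} from the maximality of $E_i$ together with $J\leq J_0$, and deduce \eqref{eq:isolation} from $\omega_{ij}\leq\max_\ell\eta_\ell$, which uses $a/(2k)+\beta/(2k)=1$. Your treatment of the degenerate cases ($J=1$ and coinciding directions) is a harmless addition that the paper does not spell out.
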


\begin{proof}
Take $K=J_0$. Since $a/(2k)+\beta/(2k)=1$,
$\omega_{ij}\leq\max_j\eta_j$. Thus \eqref{eq:intro-angular}
implies \eqref{eq:isolation}, and \eqref{eq:dominant} follows from
maximality of $E_i$.
\end{proof}

\begin{corollary}[A geometric alternative]\label{cor:alternative}
Use the constants of Theorem~\ref{thm:multi}. For sufficiently large
$s$, consider any family satisfying \eqref{eq:family},
\eqref{eq:dominant} and
$\eta_i\leq\varepsilon_g\min\{1,2(1-\delta_0)\}$.
Then at least one of the following holds: there is $j\ne i$ with
\begin{equation}\label{eq:cluster}
|e_i-e_j|^2<\varepsilon_g^{-1}
\max\{\eta_i,\eta_i^{a/(2k)}\eta_j^{\beta/(2k)}\};
\end{equation}
or the common observation region is admissible and the quantitative
separation \eqref{eq:unconditional-multi-gap} holds.
\end{corollary}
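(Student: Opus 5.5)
The corollary is a case split on the failure of the isolation hypothesis \eqref{eq:isolation}, followed by an application of Theorem~\ref{thm:multi}. Fix a family satisfying \eqref{eq:family}, \eqref{eq:dominant} and $\eta_i\leq\varepsilon_g\min\{1,2(1-\delta_0)\}$, at a time $s$ beyond the threshold of Theorem~\ref{thm:multi}. Either some $j\ne i$ satisfies \eqref{eq:cluster}, or every $j\ne i$ satisfies
\[
\omega_{ij}=\max\{\eta_i,\eta_i^{a/(2k)}\eta_j^{\beta/(2k)}\}\leq\varepsilon_g|e_i-e_j|^2 .
\]
In the first case the first alternative holds and there is nothing to prove. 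In the second case, the assumed bound on $\eta_i$ together with these inequalities is exactly \eqref{eq:isolation}. The dichotomy is strict versus non-strict, so the two cases are exhaustive.

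In the second case, all hypotheses of Lemma~\ref{lem:multi-geometry} are met. That lemma gives admissibility of the vertex \eqref{eq:multi-vertex} for all sufficiently large $s$, uniformly over families, and shows that $D_i$ is a common observation region on which every $W_{d_j}$ is smooth. This is the admissibility part of the second alternative.

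The separation \eqref{eq:unconditional-multi-gap} then follows directly from Theorem~\ref{thm:multi}, whose constants were fixed in advance and are uniform over all families satisfying the stated hypotheses. The main point to check is this uniformity. The large-time threshold must not depend on the particular family, since the corollary quantifies over families after fixing $s$. The theorem and Lemma~\ref{lem:multi-geometry} state exactly this uniformity, because $\xi_i\leq 8\varepsilon_g$ and $\delta-\delta_0\geq(1-\delta_0)/2$ hold independently of the family. So a single threshold in $s$ works for both alternatives, and the corollary follows.
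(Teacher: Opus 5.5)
Your proposal is correct and follows the paper's proof. The negation of \eqref{eq:cluster} for every $j\ne i$, together with the assumed bound on $\eta_i$, is exactly \eqref{eq:isolation}, and Theorem~\ref{thm:multi} (with admissibility from Lemma~\ref{lem:multi-geometry}) then gives the second alternative; your remark that the large-time threshold does not depend on the family correctly spells out a uniformity the paper leaves implicit.
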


\begin{proof}
If \eqref{eq:cluster} fails for every $j\ne i$, then
\eqref{eq:isolation} holds. Apply Theorem~\ref{thm:multi}.
\end{proof}

For comparable $\eta_j$, the alternative \eqref{eq:cluster} singles
out angular distances of order $\sqrt\eta$. We next examine
interactions at that scale.

\section{Clusters at the concentration scale}\label{sec:clusters}

The number of profiles is fixed throughout this section. Their
radial concentrations are comparable; their directions may coincide
or differ at the scale of the concentrating caps.

\begin{lemma}[The limiting energy of a cluster]\label{lem:cluster-limit}
Let $\eta_n\downarrow0$, and fix a unit vector $e$. Suppose, for
$1\leq j\leq J$ with $J$ fixed, that
\begin{gather}
\frac{1-|d_{j,n}|}{\eta_n}\longrightarrow\alpha_j>0,
\qquad
\frac{e_{j,n}-e}{\sqrt{\eta_n}}\longrightarrow\zeta_j\in e^\perp,
\qquad e_{j,n}=\frac{d_{j,n}}{|d_{j,n}|},\label{eq:cluster-parameters}\\
\lambda_{j,n}\longrightarrow\lambda_j>0,\qquad
\frac{\nu_{j,n}}{\eta_n}\longrightarrow q_j<\alpha_j,
\qquad \sigma_j\in\{-1,1\}.
\label{eq:cluster-phases}
\end{gather}
Identify $e^\perp$ with $\R^{N-1}$, write
$Z=(X,V)\in\R^{N-1}\times\R^2$, and define
\begin{equation}\label{eq:cluster-function}
\Psi(Z)=\kappa_0\sum_j\sigma_j\lambda_j(2\alpha_j)^{k/2}
\left(\alpha_j-q_j+
\frac{|X+\zeta_j|^2+|V|^2}{2}\right)^{-k}.
\end{equation}
For $\mathbf S_n=\sum_j\sigma_j\lambda_{j,n}
\mathbf W_{d_{j,n},\nu_{j,n}}$,
\begin{equation}\label{eq:cluster-energy-limit}
\eta_n^{-\mu}H[\mathbf S_n]\longrightarrow
\mathcal H[\Psi]:=\frac1{2\pi}\int_{\R^{N+1}}
\left(\frac12|\nabla\Psi|^2-
\frac{|\Psi|^{p+1}}{p+1}\right)dZ.
\end{equation}
Both integrals on the right are finite. Moreover,
\begin{equation}\label{eq:cluster-perturbation-norms}
\|\mathbf S_n\|_{\X_1}
+\||S_{1,n}|^{p-1}S_{1,n}\|_{r'}
\leq C\eta_n^{\mu/2}.
\end{equation}
\end{lemma}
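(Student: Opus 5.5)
The plan is to blow up the cap $\{1+e\cdot y\lesssim\eta_n\}$ anisotropically and lift it to $\R^{N+1}$ by adding two angular variables, following the spherical lifting of Lemma~\ref{lem:sobolev}. Rotate so that $e$ is the first coordinate vector, and write $y=(-1+\eta_n x,\sqrt{\eta_n}\,X)$ with $x>0$ and $X\in\R^{N-1}$. On the cap $1-|y|^2=\eta_n(2x-|X|^2)+O(\eta_n^2)$. Put $2x=|X|^2+|V|^2$ with $V\in\R^2$, that is, $|V|^2$ is the normal depth rescaled. Then the lifted measure $dy\,d\theta$ becomes, up to a factor $2\pi$ and the Jacobian $\eta_n^{(N+1)/2}$, Lebesgue measure $dZ$ on $\R^{N+1}$. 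The degenerate metric $I-y\otimes y$ becomes the flat metric in $Z$ at leading order, after multiplication by $\eta_n^{-1}$.

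Step 1 is the pointwise profile limit. Using $c_{d}^2\sim\kappa_0^2(2\alpha_j\eta_n)^k$, we have
\[
1+d_{j,n}\cdot y-\nu_{j,n}=\eta_n\Bigl(\alpha_j-q_j+x-X\cdot\zeta_j-\tfrac12|\zeta_j|^2+o(1)\Bigr)\cdot(\text{corrections}).
\]
Expanding $e_{j,n}=e+\sqrt{\eta_n}\zeta_j-\tfrac12\eta_n|\zeta_j|^2e+o(\eta_n)$ and inserting $x=(|X|^2+|V|^2)/2$ turns this into $\eta_n\bigl(\alpha_j-q_j+\tfrac12(|X+\zeta_j|^2+|V|^2)\bigr)$, up to sign conventions in $\zeta_j$. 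Hence $\eta_n^{k/2}W_{d_{j,n},\nu_{j,n}}\to\kappa_0(2\alpha_j)^{k/2}(\cdots)^{-k}$ locally uniformly with derivatives, and $\eta_n^{k/2}S_{1,n}\to\Psi$. The velocity $Z_{d,\nu}=\nu\partial_\nu W$ carries an extra factor $\eta_n$ relative to $W$ after scaling, so its $L^2$ contribution is $O(\eta_n^{\mu+1})$ and vanishes. Likewise the terms $f^2$, $fg$ and $\alpha N f^2$ in $H$ are $O(\eta_n^\mu r_\beta(\eta_n))$ by the mass estimate \eqref{eq:moment-zero}.

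Step 2 checks that the scaling exponents match. For the gradient term, $|\nabla f|^2-(y\cdot\nabla f)^2$ scales like $\eta_n^{-k}\cdot\eta_n^{-1}\cdot|\nabla_Z\Psi|^2$, and the measure gives $\eta_n^{(N+1)/2}/(2\pi)$. The total exponent is $-k-1+(N+1)/2=a-k=\mu$. The potential term gives $\eta_n^{-k(p+1)/2+(N+1)/2}=\eta_n^{-k-1+(N+1)/2}=\eta_n^{\mu}$, the same exponent. This is consistent with Lemma~\ref{lem:profile-size}.

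Step 3 is the main obstacle: passing to the limit in the integrals, which requires uniform domination. Two regions have to be handled.
\begin{itemize}
\item On the bulk region, away from the cap, each profile is bounded by $C\eta_n^{k/2}$ times a negative power of $1+e\cdot y$. The moment computations of Lemmas~\ref{lem:moments} and \ref{lem:cap} show that the contribution of $\{1+e\cdot y>A\eta_n\}$ is at most $C\eta_n^\mu A^{-\beta}$ (resp.\ $A^{-\beta-1}$). This tail is uniform in $n$ and small as $A\to\infty$, because $2k>a$.
\item Inside the cap, the rescaled integrands are dominated by $C(1+|Z|^2)^{-k-1}$ for $|\nabla\Psi|^2$ and by $C(1+|Z|^2)^{-k(p+1)}$ for the potential. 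These bounds use $\alpha_j-q_j>0$ and the convergence of $\zeta_j$. Both majorants are integrable on $\R^{N+1}$ exactly when $2k+2>N+1$, i.e.\ $2k>a$, which also proves finiteness of $\mathcal H[\Psi]$.
\end{itemize}
The nonuniform Jacobian corrections (the factor $2x-|X|^2$ being only approximate, and $O(\eta_n)$ errors in the metric) are $1+O(\eta_n|Z|^2)$ inside $|Z|\le A$. They disappear as $n\to\infty$ for each fixed $A$. Dominated convergence on $\{|Z|\le A\}$, followed by $A\to\infty$, gives \eqref{eq:cluster-energy-limit}.

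Step 4 proves \eqref{eq:cluster-perturbation-norms}. Use the triangle inequality for $\mathcal Q^{1/2}$ together with \eqref{eq:profile-size}. For shifts $q_j\le0$, use \eqref{eq:negative-X} instead. The applicability of \eqref{eq:profile-size} relies on the denominators being comparable to $\eta_n(1+x)$ uniformly, since $\nu_{j,n}/(1-|d_{j,n}|)\to q_j/\alpha_j<1$. For the dual bound, combine Lemma~\ref{lem:dual} (or \eqref{eq:negative-dual}) with $\||S_1|^{p-1}S_1\|_{r'}\le\|S_1\|_{pr'}^p\le\bigl(\sum_j\lambda_{j,n}\|W_{d_{j,n},\nu_{j,n}}\|_{pr'}\bigr)^p$, which yields $C\eta_n^{\mu/2}$ for fixed $J$.
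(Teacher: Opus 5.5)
Your proposal is correct and follows essentially the paper's route: the same cap rescaling $y=-e+\eta_n x e+\sqrt{\eta_n}X$, $C^1_{\mathrm{loc}}$ convergence of $\eta_n^{k/2}S_{1,n}$, negligible mass and velocity terms, tail bounds of order $A^{-\beta}$ and $A^{-\beta-1}$ on $\{1+e\cdot y>A\eta_n\}$, and the norm bounds from Lemmas~\ref{lem:profile-size}, \ref{lem:dual} and \ref{lem:negative-shift}. Your spherical chart with $|V|^2=2x-|X|^2$ is the paper's substitution $r=z-|X|^2/2=|V|^2/2$, packaged through Lemma~\ref{lem:sobolev}.

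Two slips need fixing. First, the linear terms are $+\zeta_j\cdot X+\tfrac12|\zeta_j|^2$, not minus, and only with the plus sign do they complete the square to $\tfrac12|X+\zeta_j|^2$. Second, your gradient majorant $(1+|Z|^2)^{-k-1}$ is not integrable on $\R^{N+1}$, since $k<a$, and $2k+2>N+1$ is not equivalent to $2k>a$. The correct majorant is $|\nabla\Psi|^2\le C(1+|Z|^2)^{-2k-1}$, which is integrable exactly when $4k+2>N+1$, that is, when $2k>a$. Your potential majorant is fine.
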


\begin{proof}
Write $\eta=\eta_n$ and use the coordinates
\[
y=-e+\sqrt\eta\,X+\eta z e,
\qquad |X|^2<2z-\eta z^2,\quad 0<z<2/\eta.
\]
The Jacobian is $\eta^{a+1}$. Since the directions have unit length,
\[
e_{j,n}\cdot e
=1-\tfrac12|e_{j,n}-e|^2
=1-\tfrac\eta2|\zeta_j|^2+o(\eta).
\]
Consequently, locally on
$\{(X,z):z\geq|X|^2/2\}$, the rescaled first component
$\eta^{k/2}S_{1,n}(y)$ converges, together with its first derivatives,
to
\[
\Phi(X,z)=\kappa_0\sum_j\sigma_j\lambda_j(2\alpha_j)^{k/2}
\left(z+\alpha_j-q_j+\frac{|\zeta_j|^2}{2}
+\zeta_j\cdot X\right)^{-k}.
\]
The limiting denominator is positive throughout this region.
After multiplication by the Jacobian and by $\eta^{-\mu}$, the
degenerate gradient density converges to
\[
|\nabla_X\Phi+X\partial_z\Phi|^2
+(2z-|X|^2)|\partial_z\Phi|^2,
\]
and the potential density converges to $|\Phi|^{p+1}$.
The mass terms tend to zero by \eqref{eq:moment-zero}; the squared
velocity terms are $O(\eta^{\mu+1})$ before normalization.
The mixed mass--velocity term tends to zero by Cauchy--Schwarz.
These estimates also apply to negative phases, since their
denominators are larger than at phase zero.

For completeness, the convergence has no unaccounted tail. Put
$z_0=1+e\cdot y$. For $z_0\geq R\eta$, with $R$ sufficiently large
in terms of the limiting parameters, the denominators of every
member are bounded below by $c z_0$, and
$1-(e_{j,n}\cdot y)^2\leq C z_0$.
The cross-sectional volume is at most $Cz_0^a$. Thus the normalized
gradient and potential tails of each member are bounded respectively
by
\[
C\eta^{k-\mu}\int_{R\eta}^2 z_0^{a-2k-1}\,dz_0
\leq CR^{-\beta},\qquad
C\eta^{k+1-\mu}\int_{R\eta}^2 z_0^{a-2k-2}\,dz_0
\leq CR^{-\beta-1}.
\]
The same bounds, with a different constant, hold for the finite sum.
Local convergence and then $R\to\infty$ justify the limiting
integrals.

Finally set $r=z-|X|^2/2$ and
$F(X,r)=\Phi(X,r+|X|^2/2)$. The limiting gradient density becomes
$|\nabla_XF|^2+2r|\partial_rF|^2$.
Writing $r=|V|^2/2$ in two radial variables gives
$dV=2\pi\,dr$ after angular integration. This proves
\eqref{eq:cluster-energy-limit}, with $F(X,|V|^2/2)=\Psi(X,V)$.
The tail bounds also prove finiteness.
Finally \eqref{eq:cluster-perturbation-norms} follows from
Lemmas~\ref{lem:profile-size}, \ref{lem:dual},
\ref{lem:negative-shift} and the triangle inequality in $L^{pr'}$.
\end{proof}

\begin{lemma}[A negative limit before the first singularity]
\label{lem:cluster-negative}
Fix the parameters $\alpha_j,\zeta_j,\lambda_j,\sigma_j$ above.
Let
\begin{equation}\label{eq:first-cluster}
D_* =\min_j\left(2\alpha_j+\frac{|\zeta_j|^2}{2}\right)>0.
\end{equation}
Group indices with the same pair $(\alpha_j,\zeta_j)$.
Assume that at least one group attaining this minimum has
\begin{equation}\label{eq:cluster-noncancel}
\sum_{j\text{ in that group}}\sigma_j\lambda_j\ne0.
\end{equation}
For $0<D<D_*$, put
$q_j(D)=D-\alpha_j-|\zeta_j|^2/2$ in
\eqref{eq:cluster-function}, and denote the result by $\Psi_D$.
Then
\begin{equation}\label{eq:cluster-negative-limit}
\mathcal H[\Psi_D]\longrightarrow-\infty
\qquad\text{as }D\uparrow D_*.
\end{equation}
In particular, \eqref{eq:cluster-noncancel} holds if the limiting
pairs $(\alpha_j,\zeta_j)$ are pairwise distinct.
\end{lemma}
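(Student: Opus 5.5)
The plan is to blow up the minimizing group at its concentration point in $\R^{N+1}$. I would show that its potential energy dominates every gradient contribution, exploiting the same gap of one power of the small parameter as in Lemma~\ref{lem:moments}.

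\textbf{Setup.} With $q_j(D)$ inserted, the denominator in \eqref{eq:cluster-function} becomes
\[
D_j-D+\tfrac12|Z-Z_j|^2,\qquad D_j=2\alpha_j+\tfrac12|\zeta_j|^2,\qquad Z_j=(-\zeta_j,0).
\]
Put $\epsilon=D_*-D\downarrow0$. Fix a minimizing group $G$ satisfying \eqref{eq:cluster-noncancel}, with common pair $(\alpha,\zeta)$, and set
\[
A=\kappa_0(2\alpha)^{k/2}\sum_{j\in G}\sigma_j\lambda_j\neq0.
\]
The group contributes exactly
\[
\Phi_\epsilon(Z)=A\bigl(\epsilon+\tfrac12|Z-Z_G|^2\bigr)^{-k}
=A\epsilon^{-k}\phi\bigl((Z-Z_G)/\sqrt\epsilon\bigr),\qquad
\phi(\zeta')=(1+\tfrac12|\zeta'|^2)^{-k}.
\]

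\textbf{Scaling of the model bubble.} The model profile satisfies $\phi\in L^{p+1}(\R^{N+1})$ because $2k(p+1)=4k+4>N+1$. It also satisfies $\nabla\phi\in L^2$ because $4k+2>N+1$; both conditions follow from $2k>a$. With $(N+1)/2=a+1$ and $k(p+1)=2k+2$, scaling gives
\[
\|\nabla\Phi_\epsilon\|_2^2=A^2\|\nabla\phi\|_2^2\,\epsilon^{-\beta},\qquad
\int|\Phi_\epsilon|^{p+1}=|A|^{p+1}\|\phi\|_{p+1}^{p+1}\,\epsilon^{-\beta-1}.
\]
The same computation bounds the gradient of every other minimizing group by $C\epsilon^{-\beta/2}$ in $L^2$, whether or not that group cancels. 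Every non-minimal group has $D_j-D\ge D_j-D_*>0$, so its term is smooth with gradient bounded in $L^2$ uniformly in $D$. The triangle inequality then gives the global bound
\[
\|\nabla\Psi_D\|_2^2\leq C\epsilon^{-\beta}.
\]

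\textbf{Localization of the potential.} Take $r>0$ smaller than half the distance from $Z_G$ to the other centers $Z_j$ of minimizing groups. Two minimizing groups with the same center have the same $\zeta$; since both attain $D_*$, they also have the same $\alpha$, so they are the same group. Hence every other minimizing group has a different center. On $B(Z_G,r)$, all terms other than $\Phi_\epsilon$ are therefore bounded by a constant $C_r$ independent of $\epsilon$: the minimizing terms because of the separation $|Z-Z_j|\ge r$, the non-minimal ones because of the positive gap. Consequently
\[
\int_{B(Z_G,r)}|\Psi_D|^{p+1}\ge 2^{-p}\int_{B(Z_G,r)}|\Phi_\epsilon|^{p+1}-C_r\ge c\,\epsilon^{-\beta-1}
\]
for small $\epsilon$. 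Here the integral of $|\Phi_\epsilon|^{p+1}$ over the ball captures a fixed fraction of its full scaling, since $r/\sqrt\epsilon\to\infty$. Combining this with the gradient bound gives
\[
2\pi\,\mathcal H[\Psi_D]\le C\epsilon^{-\beta}-c\,\epsilon^{-\beta-1}\to-\infty,
\]
which is \eqref{eq:cluster-negative-limit}.

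\textbf{Expected obstacle and final assertion.} The main point to check carefully is the bookkeeping of groups. Cancelling groups sitting at the minimum must enter only through the harmless gradient bound, and the localization must use that distinct minimizing groups have distinct centers. Finally, if the pairs $(\alpha_j,\zeta_j)$ are pairwise distinct, each group is a singleton and its sum is $\sigma_j\lambda_j\ne0$, so \eqref{eq:cluster-noncancel} holds.
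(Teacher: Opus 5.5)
Your proposal is correct and follows essentially the paper's argument: you obtain the gradient bound $C(D_*-D)^{-\beta}$ by dilation and the triangle inequality, and a potential lower bound of order $(D_*-D)^{-\beta-1}$ from the noncancelling minimizing group, using that distinct minimizing groups have distinct centres. The only cosmetic difference is the localization: you use a fixed ball $B(Z_G,r)$ with $|a+b|^{p+1}\geq 2^{-p}|a|^{p+1}-|b|^{p+1}$, while the paper uses the ball of radius $\sqrt{h}$ and the pointwise bound $|\Psi_D|\geq c'h^{-k}$ there.
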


\begin{proof}
Write $h=D_*-D$. Every denominator has the form
$h+c_j+(|X+\zeta_j|^2+|V|^2)/2$, with $c_j\geq0$.
A dilation by $\sqrt h$ gives
\[
\int_{\R^{N+1}}|\nabla\Psi_D|^2\,dZ\leq C h^{-\beta}
\qquad(0<h<1).
\]
Indeed the gradient integral of each member scales as
$h^{(N+1)/2-2k-1}=h^{-\beta}$, and the gradient of the finite sum
is controlled by the sum of these integrals.

Choose a minimum group satisfying \eqref{eq:cluster-noncancel},
and let $Z_0=(-\zeta_j,0)$ be its centre.
On $|Z-Z_0|\leq\sqrt h$, its combined contribution has absolute
value at least $c h^{-k}$, with $c>0$.
Every other group stays bounded there as $h\to0$: either its centre
is different, or its $c_j$ is strictly positive.
Thus $|\Psi_D|\geq c' h^{-k}$ on this ball for small $h$, and
\[
\int_{\R^{N+1}}|\Psi_D|^{p+1}\,dZ
\geq c''h^{(N+1)/2-k(p+1)}=c''h^{-\beta-1}.
\]
It follows that $\mathcal H[\Psi_D]\leq C h^{-\beta}
-c h^{-\beta-1}\to-\infty$.
\end{proof}

\begin{theorem}[Exclusion of a comparable cluster]\label{thm:cluster}
Let $u$ satisfy \eqref{eq:nc}, let $s_n\to\infty$, and consider
families satisfying \eqref{eq:cluster-parameters} and
$\lambda_{j,n}\to\lambda_j>0$, with fixed signs and fixed $J$.
Assume \eqref{eq:cluster-noncancel} for the minimum in
\eqref{eq:first-cluster}. Then there are $D\in(0,D_*)$ and
$\epsilon_c>0$ such that, for all sufficiently large $n$,
\begin{equation}\label{eq:cluster-gap}
\left\|\mathcal T_{b_n,-e/2}(w(s_n),w_s(s_n))
-\sum_j\sigma_j\lambda_{j,n}
\mathbf W_{d_{j,n},\nu_{j,n}}\right\|_{\X_1}
\geq\epsilon_c\eta_n^{\mu/2},
\end{equation}
where
\begin{equation}\label{eq:cluster-vertex}
b_n=\frac{1+D\eta_n}{2},\qquad
\nu_{j,n}=\frac{d_{j,n}\cdot e-1+D\eta_n}{1+D\eta_n}.
\end{equation}
All comparison profiles are smooth on the observation ball
$-e/2+b_nB$. The result is invariant under a common rotation of
each configuration.
\end{theorem}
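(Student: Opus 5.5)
We follow the pattern of the proofs of Theorems~\ref{thm:exclusion} and \ref{thm:multi}. There are three steps: realize $\mathcal T_{b_n,-e/2}(w,w_s)$ as similarity data at an admissible vertex; show that the cluster model $\mathbf S_n$ has energy at most $-2b\,\eta_n^\mu$; and absorb the error with Lemma~\ref{lem:upper-perturbation}. If \eqref{eq:cluster-gap} failed along a subsequence, the transported data would then have negative energy, which contradicts \eqref{eq:nonnegative}.

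\emph{Geometry.} For $\tau_n=e^{-s_n}$ we take the vertex
\[
\widehat x_n=x_0-\tfrac12\tau_n e,\qquad \widehat T_n=T_0-\tfrac{1-D\eta_n}{2}\tau_n .
\]
Then $\widehat T_n-t_n=b_n\tau_n$ and $\gamma=-e/2$. The vertex lies inside the cone \eqref{eq:nc} when $\tfrac{1-D\eta_n}{2}>\tfrac{\delta_0}{2}$. This holds for large $n$ because $\eta_n\to0$ and $\delta_0<1$, so the vertex is admissible for large $s_n$. Formula \eqref{eq:nu} with $\widehat T-T_0-d\cdot(\widehat x-x_0)=\tau_n(d\cdot e-1+D\eta_n)/2$ reproduces $\nu_{j,n}$ in \eqref{eq:cluster-vertex}.

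\emph{Phase asymptotics and smoothness.} Write $d_{j,n}\cdot e=|d_{j,n}|\,e_{j,n}\cdot e$. By \eqref{eq:cluster-parameters},
\[
d_{j,n}\cdot e=1-\eta_n\bigl(\alpha_j+|\zeta_j|^2/2\bigr)+o(\eta_n),
\]
so $\nu_{j,n}/\eta_n\to D-\alpha_j-|\zeta_j|^2/2=q_j(D)$. The condition $D<D_*$ gives $q_j(D)<\alpha_j$ for every $j$, and this is exactly the hypothesis \eqref{eq:cluster-phases}. The minimum of $1+d_{j,n}\cdot Y-\nu_{j,n}$ over $\overline B$ is $1-|d_{j,n}|-\nu_{j,n}\sim(\alpha_j-q_j)\eta_n>0$. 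Hence every profile is smooth on the observation ball, and Lemma~\ref{lem:vertex-identity} applies term by term, as in \eqref{eq:multi-transport}.

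\emph{Choice of $D$ and the negative model energy.} Lemma~\ref{lem:cluster-negative} lets us fix $D\in(0,D_*)$ with $\mathcal H[\Psi_D]\leq-3b$ for some $b>0$. Lemma~\ref{lem:cluster-limit} then gives $\eta_n^{-\mu}H[\mathbf S_n]\to\mathcal H[\Psi_D]$, so $H[\mathbf S_n]\leq-2b\,\eta_n^\mu$ for large $n$. The same lemma gives the bound \eqref{eq:cluster-perturbation-norms} on $\|\mathbf S_n\|_{\X_1}+\||S_{1,n}|^{p-1}S_{1,n}\|_{r'}$.

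\emph{Absorbing the error.} Suppose that $D_n:=\|\mathcal T(w,w_s)-\mathbf S_n\|_{\X_1}<\epsilon_c\eta_n^{\mu/2}$. Lemma~\ref{lem:upper-perturbation} yields
\[
H[\widehat w,\widehat w_{\widehat s}]\leq-2b\,\eta_n^\mu+C(\epsilon_c+\epsilon_c^2)\,\eta_n^\mu<0
\]
once $\epsilon_c$ is small. This contradicts \eqref{eq:nonnegative}. Rotation invariance is clear because $H$, $\X_1$ and the construction are all rotation equivariant.

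\emph{Main obstacle.} The delicate point is applying Lemma~\ref{lem:cluster-limit} with the phases $\nu_{j,n}$ produced by the vertex. The lemma needs $q_j<\alpha_j$, which we have checked, and it needs uniform control of the members with negative $q_j$, which the lemma covers. The difficulty we expect is that $D$ must be fixed \emph{before} the limit in $n$. Since the constants in \eqref{eq:cluster-perturbation-norms} depend on $D$ through $\min_j(\alpha_j-q_j)$, we fix $D$ first and only then choose $\epsilon_c$ and the threshold in $n$.
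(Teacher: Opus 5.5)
Your proposal is correct and follows the paper's proof essentially step for step. It uses the same vertex $\widehat x_n=x_0-\tfrac12\tau_ne$, $\widehat T_n=T_0-\tfrac12(1-D\eta_n)\tau_n$, fixes $D$ via Lemma~\ref{lem:cluster-negative}, checks $\nu_{j,n}/\eta_n\to q_j(D)<\alpha_j$ so that Lemma~\ref{lem:cluster-limit} applies, and absorbs the error with Lemma~\ref{lem:upper-perturbation} against \eqref{eq:nonnegative}; your explicit phase asymptotics and the margin $1-|d_{j,n}|-\nu_{j,n}\sim(\alpha_j-q_j)\eta_n$ fill in details that the paper states briefly.
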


\begin{proof}
Choose $D$ by Lemma~\ref{lem:cluster-negative} so that
$\mathcal H[\Psi_D]<0$. Formula \eqref{eq:cluster-parameters} gives
$\nu_{j,n}/\eta_n\to q_j(D)<\alpha_j$.
Lemma~\ref{lem:cluster-limit} therefore implies, for the sum
$\mathbf S_n$ in \eqref{eq:cluster-gap},
\[
H[\mathbf S_n]\leq-c\eta_n^\mu,
\quad \|\mathbf S_n\|_{\X_1}
+\||S_{1,n}|^{p-1}S_{1,n}\|_{r'}\leq C\eta_n^{\mu/2}.
\]
Lemma~\ref{lem:upper-perturbation} shows that perturbations of
$\X_1$ norm at most $\epsilon_c\eta_n^{\mu/2}$ still have negative
energy when $\epsilon_c$ is small enough.

Put $\tau_n=e^{-s_n}$ and $\delta_n=1-D\eta_n$.
For large $n$, $\delta_0<\delta_n<1$, so
\[
\widehat x_n=x_0-\tfrac12\tau_n e,\qquad
\widehat T_n=T_0-\tfrac12\delta_n\tau_n
\]
is an admissible vertex with remaining time $b_n\tau_n$.
The transport identity gives exactly \eqref{eq:cluster-vertex}.
Its profile denominators are positive since
$1-|d_{j,n}|-\nu_{j,n}>0$ for large $n$.
The actual transported data have nonnegative energy by
\eqref{eq:nonnegative}, proving \eqref{eq:cluster-gap}.
Rotation invariance gives the last assertion.
\end{proof}

\section{Growth estimates and sign constraints}\label{sec:comparison}

We compare the exclusions with two necessary conditions on blow-up
data: the growth estimates and the nonnegative sign of the
energy. The examples below are comparison data, not additional
blow-up solutions.

\subsection{Logarithmic growth estimates}

\begin{proposition}[Logarithmic bounds and degenerating profiles]
\label{prop:log-gap}
Fix $L>0$, $e\in\mathbb S^{N-1}$ and $c\in(q_\infty,1)$.
For large $s$, set
\[
\eta(s)=s^{-L},\qquad d(s)=(1-\eta(s))e,\qquad
F(y,s)=W_{d(s)}(y),\quad G(y,s)=\partial_sF(y,s).
\]
These are smooth on $B_{1+c\eta(s)}$. They satisfy
\begin{gather}
\|F(s)\|_{H^1(B)}^2+\|G(s)\|_2^2
\asymp s^{L(1-\mu)}\longrightarrow\infty,\label{eq:path-growth}\\
\E^{1+c\eta(s)}_{1,d(s),+}(F(s),G(s))\leq C/s,
\qquad H[F(s),G(s)]>0.\label{eq:path-error}
\end{gather}
Put $V=G+kF+y\cdot\nabla F$ and
\[
\mathcal J_F(s)=\frac12\int_B\left[
|\nabla F|^2-(y\cdot\nabla F)^2+|V|^2
-\frac{|F|^{p+1}}{p+1}\right]dy.
\]
For every fixed $\ell>0$,
\begin{align}
s^\ell\int_s^{s+\log2}e^{-2\mu\sigma}
\int_B(|\nabla F|^2+|V|^2)\,dy\,d\sigma&\longrightarrow0,
\label{eq:path-log1}\\
s^\ell e^{-2\mu s}|\mathcal J_F(s)|&\longrightarrow0,
\label{eq:path-log2}\\
s^\ell e^{-\frac{4k\mu}{2k+1}s}\|F(s)\|_2^2&\longrightarrow0.
\label{eq:path-log3}
\end{align}
The path cannot be the similarity evolution of a solution satisfying
\eqref{eq:nc} on these late-time collars.
\end{proposition}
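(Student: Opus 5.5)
The proof reduces to explicit computations on the single profile $W_{d(s)}$, using the scalings of Lemma~\ref{lem:profile-size} and Lemma~\ref{lem:moments}, followed by an appeal to Theorem~\ref{thm:exclusion} for the final claim.

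\textbf{Smoothness and growth.} On $B_{1+c\eta}$ we have $1+d\cdot y\geq 1-(1-\eta)(1+c\eta)\geq(1-c)\eta>0$. Hence $F$ and $G$ are smooth there.

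For \eqref{eq:path-growth}, first bound the parts that stay small.
\begin{itemize}
\item The mass satisfies $\|F\|_2^2=O(\eta^\mu)$ by \eqref{eq:moment-zero}.
\item The dominant term is $\|\nabla F\|_2^2\asymp\eta^{\mu-1}$, by \eqref{eq:H1-large}.
\end{itemize}
For $G=\dot\eta\,\partial_\eta W_d$, note that $|\dot\eta|=L s^{-1}\eta$. Differentiating $W_d$ in $|d|$ gives two kinds of terms.
\begin{itemize}
\item Terms of size $\eta^{-1}W_d$.
\item A term $k\,e\cdot y\,(1+d\cdot y)^{-1}W_d$.
\end{itemize}
Both are controlled by the moments $c_d^2 M_{2k}$ and $c_d^2 M_{2k+2}$, which are $O(\eta^{\mu-2})$ and $O(\eta^{\mu-1})$ respectively. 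Therefore $\|G\|_2^2\leq C s^{-2}\eta^{\mu}$, which is negligible against $\eta^{\mu-1}$. This gives $\asymp\eta^{\mu-1}=s^{L(1-\mu)}$.

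\textbf{The error and the sign in \eqref{eq:path-error}.} The error is exactly $\|(0,G)\|_{\X_{1+c\eta}}/\eta^{\mu/2}$. On the collar, $G$ is bounded by the same pointwise expression as on $B$. The moments over $B_{1+c\eta}$ are estimated by the rescaling $y=RY$, $d'=Rd$ from the proof of \eqref{eq:collar-size}, which yields the same exponents. This gives the bound $C/s$.

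For the positivity of $H$, I would apply Lemma~\ref{lem:upper-perturbation} in reverse. Write $H[F,G]=H[W_d,0]+\tfrac12\|G\|_2^2+\alpha\int W_dG$. By \eqref{eq:stationary-positive} and Proposition~\ref{prop:asymptotic} with $q=0$, $H[W_d,0]=h_d(0)\geq G(0)\eta^\mu/2$. The cross term is at most $\|W_d\|_2\|G\|_2=O(s^{-1}\eta^{\mu})$, so $H>0$ for large $s$.

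\textbf{The logarithmic bounds.} All three quantities are polynomial in $s$, at worst of order $s^{L(1-\mu)}$, times the factor $e^{-2\mu\sigma}$ or $e^{-4k\mu s/(2k+1)}$.
\begin{itemize}
\item For \eqref{eq:path-log1}, note that $V=G+kF+y\cdot\nabla F$ is bounded by $C(|G|+|F|+|\nabla F|)$. Hence the inner integral is $O(\eta^{\mu-1})$.
\item For \eqref{eq:path-log2}, $|\mathcal J_F|$ is bounded by the same quantity together with $\int W_d^{p+1}=O(\eta^\mu)$.
\item For \eqref{eq:path-log3}, we have $\|F\|_2^2=O(\eta^\mu)$.
\end{itemize}
Since $\mu>0$, multiplying by $s^\ell$ still tends to zero in each case.

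\textbf{Final assertion.} Suppose the path were $(w,w_s)$ for a solution satisfying \eqref{eq:nc}. Take $\lambda=1\geq m$ for any $m\leq1$. Since $c\in(q_\infty,1)$ and $q_m=q_\infty$ when $m=1$, Theorem~\ref{thm:exclusion} applies with $m=1$. It gives $\E^{1+c\eta}_{1,d,+}\geq\varepsilon_*$ once $\eta(s)<\eta_*$. This contradicts the bound $C/s$.

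The main obstacle is making the estimate of $\|G\|$ on the collar $B_{1+c\eta}$ uniform in $\eta$. The rest is bookkeeping of exponents.
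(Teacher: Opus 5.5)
Your proposal is correct and follows essentially the paper's argument. The ingredients match: direct profile estimates from Lemmas~\ref{lem:moments} and \ref{lem:profile-size}, the exact expansion of $H$ in the velocity around $h_d(0)\gtrsim\eta^\mu$, polynomial-versus-exponential bookkeeping, and Theorem~\ref{thm:exclusion} with $m=1$ and $q_1=q_\infty$. The collar step you flag as the main obstacle is handled in the paper more simply: since $1+d\cdot y\geq(1-c)\eta$ on $B_{1+c\eta}$, both terms of $\partial_\eta W_d$ are pointwise at most $C_c\eta^{-1}W_d$, so $|G|\leq Cs^{-1}F$, and \eqref{eq:collar-size} gives $\|G\|_{L^2(B_{1+c\eta})}\leq Cs^{-1}\eta^{\mu/2}$ directly. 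Also, it is $\eta^{-2}c_d^2M_{2k}$, not $c_d^2M_{2k}$, that is $O(\eta^{\mu-2})$.
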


\begin{proof}
On the indicated collar, $1+d(s)\cdot y\geq(1-c)\eta(s)$.
Differentiating \eqref{eq:Wd} with respect to $\eta$ gives
$|\partial_\eta W_{(1-\eta)e}|\leq C_c\eta^{-1}W_{(1-\eta)e}$.
Since $|\eta'|=L\eta/s$,
\[
|G|\leq C_{c,L}s^{-1}F,
\qquad \|G\|_{L^2(B_{1+c\eta})}\leq C s^{-1}\eta^{\mu/2},
\]
by Lemma~\ref{lem:profile-size}. This proves the error bound.
The same lemma gives \eqref{eq:path-growth}.
Proposition~\ref{prop:asymptotic} at $q=0$ gives
$H[F,0]\geq c_0\eta^\mu$ for small $\eta$.
The quadratic terms involving $G$ change this by at most
$C\eta^\mu/s$, so $H[F,G]>0$ for large $s$.

Moreover,
\[
\int_B\bigl(F^2+|\nabla F|^2+G^2+|V|^2+|F|^{p+1}\bigr)dy
\leq C(1+s^{L(1-\mu)}).
\]
Each polynomial factor is dominated by the decaying exponential
in \eqref{eq:path-log1}--\eqref{eq:path-log3}, proving those limits.
The change of variables in \eqref{eq:similarity} transforms
\eqref{eq:HZ} into the expression in \eqref{eq:path-log1}.
The expression in \cite[equation~(1.14)]{HZ25} becomes
$s^\ell e^{-2\mu s}\mathcal J_F(s)$, and its
Corollary~2 becomes the expression in \eqref{eq:path-log3}; here
\[
N-2k-\frac{(p-1)N}{p+3}=\frac{4k\mu}{2k+1}>0.
\]
Finally, \eqref{eq:path-error} contradicts Theorem~\ref{thm:exclusion}
with $m=1$
if this path were an actual evolution in a fixed non-characteristic
cone.
\end{proof}

The path satisfies the numerical growth estimates and the sign
condition at its original vertex. Its exclusion requires the sign
at a different admissible vertex, which uses the non-characteristic
evolution.

\subsection{The sign constraints at a single time}\label{sec:remaining}

One might try to recover the full energy norm by imposing the sign
of $H$ simultaneously in all contracted frames. The following
example shows that these inequalities do not control oscillation.

\begin{proposition}[Noncoercivity of the family of sign constraints]
\label{prop:noncoercivity}
There are smooth pairs $(f_n,g_n)$ on $\R^N$ such that
\begin{equation}\label{eq:all-signs}
H[\mathcal T_{b,\gamma}(f_n,g_n)]\geq0
\qquad\text{for every }0<b\leq1,\ \gamma\in\R^N,
\end{equation}
but
\begin{equation}\label{eq:unbounded-data}
\|f_n\|_{H^1(B)}+\|g_n\|_{L^2(B)}\longrightarrow\infty.
\end{equation}
These pairs are not asserted to be slices of solutions with a common
non-characteristic blow-up cone.
\end{proposition}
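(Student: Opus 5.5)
The plan is to exploit the fact that every frame energy $H[\mathcal T_{b,\gamma}(f,g)]$ splits as $\mathcal Q$ minus the potential. By the completed square \eqref{eq:Q-square}, $\mathcal Q$ contains the nonnegative term $\frac{\Lambda}{2}\int_B F^2$, with $\Lambda>0$, and its other contributions are nonnegative whatever the second component is. The potential, on the other hand, is pointwise controlled by $F^2$ once $F$ is uniformly small. So we will take data of uniformly small amplitude but rapidly growing frequency. The sign constraints then hold trivially, while the $H^1$ norm diverges through the gradient alone.

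Concretely, set $g_n=0$ and $f_n(y)=\epsilon\sin(n y_1)$. Here $\epsilon>0$ is a fixed small constant depending only on $N,p$, to be chosen with
\[
\frac{\epsilon^{p-1}}{p+1}\leq\frac{\Lambda}{2}.
\]
Fix $0<b\leq1$ and $\gamma\in\R^N$, and write $(F,G)=\mathcal T_{b,\gamma}(f_n,0)$. By \eqref{eq:transport}, $F(Y)=b^kf_n(\gamma+bY)$, so $|F|\leq b^k\epsilon\leq\epsilon$ on $B$. The second component $G$ contains the possibly large term $b^{k+1}(y-Y)\cdot\nabla f_n(y)$, but it enters $H$ only through $\mathcal Q$. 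Since $F$ is smooth and bounded and $G$ is bounded on $\overline B$, the pair lies in $H^1(B)\times L^2(B)$.

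We then write
\[
H[F,G]=\mathcal Q[F,G]-\frac1{p+1}\int_B|F|^{p+1}\,dY.
\]
Using \eqref{eq:Q-square}, we drop the nonnegative terms $(G-\mu F)^2$ and $|\nabla F|^2-(Y\cdot\nabla F)^2$; the latter is nonnegative because $I-Y\otimes Y\geq0$ on $B$. This gives
\[
H[F,G]\geq\int_B\left(\frac{\Lambda}{2}-\frac{|F|^{p-1}}{p+1}\right)F^2\,dY\geq0,
\]
by the choice of $\epsilon$. The bound is uniform in $b,\gamma$, which proves \eqref{eq:all-signs}. The only point to check is that the transport formula requires no domain restriction: $f_n$ is defined on all of $\R^N$, so $\gamma+bB$ is always in its domain.

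Finally, we compute
\[
\|\nabla f_n\|_{L^2(B)}^2=\epsilon^2n^2\int_B\cos^2(ny_1)\,dy.
\]
This integral tends to $|B|/2$ by the Riemann--Lebesgue lemma, so $\|f_n\|_{H^1(B)}\gtrsim\epsilon n\to\infty$, which gives \eqref{eq:unbounded-data}.

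There is no genuine obstacle here. The one step that needs care is recognizing that the uncontrolled second transported component is harmless, because it appears only inside the square $(G-\mu F)^2$. It is also worth remarking that the example is in line with Remark~\ref{rem:strict-weakening}: the sign constraints only see the lower bound from $\Lambda F^2$ against the potential, not the gradient.
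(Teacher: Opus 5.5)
Your proposal is correct and follows essentially the paper's proof. The paper uses the same data $f_n=A\sin(ny_1)$, $g_n=0$ with $A^{p-1}\leq(p+1)\Lambda/2$, drops the transported velocity through the completed square \eqref{eq:Q-square} using only $|b^kf_n|\leq A$ for $0<b\leq1$, and concludes with the same Riemann--Lebesgue computation of $\|\nabla f_n\|_{L^2(B)}^2$.
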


\begin{proof}
Let $\Lambda>0$ be defined in \eqref{eq:Q-square}, and choose $A>0$
so that $A^{p-1}\leq(p+1)\Lambda/2$.
Whenever $|v|\leq A$, \eqref{eq:Q-square} implies
\[
H[v,z]\geq\frac12\int_B
\left[(z-\mu v)^2+|\nabla v|^2-(y\cdot\nabla v)^2\right]dy
+\int_B\left(\frac{\Lambda}{2}
-\frac{A^{p-1}}{p+1}\right)v^2\,dy\geq0.
\]
Set $f_n(y)=A\sin(ny_1)$ and $g_n(y)=0$.
The first component after transport is
$b^kA\sin(n(\gamma_1+bY_1))$, whose absolute value is at most $A$
for $0<b\leq1$. Thus \eqref{eq:all-signs} holds regardless of the
transported velocity component.

Finally,
\[
\|\nabla f_n\|_{L^2(B)}^2
=A^2n^2\int_B\cos^2(ny_1)\,dy
=\frac{A^2|B|}{2}n^2+o(n^2),
\]
where the oscillatory integral tends to zero by the
Riemann--Lebesgue lemma applied to the cross-sectional volume of $B$.
This proves \eqref{eq:unbounded-data}.
\end{proof}

\section{Horizons}\label{sec:horizons}\label{sec:programme}

An upper bound uniform in similarity time would follow by excluding every sequence
$s_n\to\infty$ for which $M(s_n)\to\infty$. Theorem~\ref{thm:bound}
already gives the necessary condition
\begin{equation}\label{eq:bad-sequence}
s_n\longrightarrow\infty,\quad M(s_n)\longrightarrow\infty
\quad\Longrightarrow\quad
\liminf_{n\to\infty}\E_m(s_n)\geq\varepsilon.
\end{equation}
The next task is to derive information about such a sequence from
the equation. The following questions describe a possible
continuation of the argument; they are not additional conclusions
of the exclusion theorems.

\subsection{Profile extraction}

A useful extraction result must provide an approximation on the
region used by the change of vertex. For the isolated-profile
argument, this is $D_i=\gamma_i+b_iB$; for a comparable cluster, it
is $-e/2+b_nB$. Both regions may contain a collar outside $B$.
An expansion known only on $B$ would therefore leave part of the
argument uncontrolled.

The required remainder scale is equally restrictive. For a selected
profile it is $\lambda_i\eta_i^{\mu/2}$ in the transported
$\X_1$ norm. In contrast,
Lemma~\ref{lem:profile-size} gives
$\|\nabla W_d\|_2\asymp\eta^{(\mu-1)/2}$.
Smallness relative to the diverging full gradient norm is thus
insufficient. An extraction leading to
\eqref{eq:unconditional-multi-gap} must identify a dominant member
with the summability bound \eqref{eq:dominant}, while controlling
the remainder at the smaller degenerate-energy scale. Alternatively,
an extraction satisfying \eqref{eq:cluster-parameters} and
\eqref{eq:cluster-noncancel} could use
Theorem~\ref{thm:cluster}.

The profile decompositions of Bahouri and G\'erard \cite{BG99} and
the concentration--compactness and rigidity argument of Kenig and
Merle \cite{KM08} suggest a general strategy for separating
concentration from a remainder. Their energy-critical setting is
different: those results do not provide a decomposition into the family \eqref{eq:Wd}
in the present degenerate norm. In particular, an unbounded
sequence here would first require a normalization and a compactness
statement compatible with the boundary caps and the equation.

\subsection{Several scales and cancellation}

The limiting energy in Lemma~\ref{lem:cluster-limit} gives an explicit
object on which to study interacting caps. Its negative-potential
mechanism survives arbitrary signs as long as the first singular
group has a nonzero combined coefficient. Two cases remain outside
that argument: non-comparable radial scales and cancellation at
the first limiting singularity.

For non-comparable scales, one can seek a successive decomposition
into groups whose concentration ratios have finite positive limits.
Theorem~\ref{thm:multi} applies when a dominant profile separates
from the other directions; Lemma~\ref{lem:cluster-limit} gives the
candidate interaction energy within a comparable group. A proof
combining the two would need to estimate the effect of every
remaining group in the same admissible frame. Pairwise separation
alone does not establish that estimate when the observation region
changes from one scale to another.

If a minimizing group cancels, the leading limit loses information.
One must retain the differences between its finite-$n$ parameters
and determine the next nonzero term. It is not enough to delete
the cancelled limiting group and move the vertex beyond its first
singularity: the finite-$n$ comparison profiles still have their
own domains of definition. A refinement must keep their
denominators positive and control the remainder after transport.

\subsection{The role of the evolution}

Proposition~\ref{prop:noncoercivity} shows that a collection of
sign inequalities at one time cannot bound the full gradient.
An argument for arbitrary solutions must use the equation over
a time interval to control energy not captured by the explicit
profiles. One possible target is a localized alternative: either
the similarity energy remains bounded, or a sequence of times and
admissible regions carries a concentration of one of the types
excluded above, with the required remainder estimate.

The dissipation identity \eqref{eq:lyapunov} and the bounds of
\cite{HZ25} are natural inputs for choosing times and controlling
fluxes. Their exponential weight in similarity variables is a
specific difficulty: Proposition~\ref{prop:log-gap} exhibits
polynomially growing similarity data compatible with the numerical
conclusions. A successful argument must exploit the evolution and
the geometry to obtain additional control on these local scales.

\subsection{Nearly cancelling clusters}

The most direct extension of the cluster calculation is a
quantitative version of Lemma~\ref{lem:cluster-limit} when the
chosen vertex approaches the first singular group as $n$ grows.
The current proof fixes $D<D_*$ before letting $n\to\infty$.
Allowing $D=D_n\uparrow D_*$ would require estimates uniform in
the smallest profile denominator, in the differences between
nearly coincident parameters, and in the transported error.
Such estimates would test whether a nonzero group can still be
detected after leading cancellation, and whether a background
larger than the present error threshold can be allowed.

This is a concrete intermediate objective: obtain a quantitative
negative-energy criterion for clusters with nearly cancelling
parameters. Connecting that criterion to an extraction theorem
for actual superconformal solutions would then supply the next
step towards the universal bound.

\section*{Funding}

No funding was received for this work.

\section*{Declaration of competing interest}

The author declares no competing interests.

\section*{Data availability}

Data sharing is not applicable to this article as no datasets were generated or analysed during this study.


\begin{thebibliography}{99}
\small
\raggedright
\setlength{\itemsep}{0pt}

\bibitem{Alinhac95}
S. Alinhac,
\emph{Blowup for Nonlinear Hyperbolic Equations},
Progress in Nonlinear Differential Equations and their Applications,
vol.~17, Birkh\"auser, Boston, 1995.
\href{https://doi.org/10.1007/978-1-4612-2578-2}{doi:10.1007/978-1-4612-2578-2}.

\bibitem{AM01}
C. Antonini and F. Merle,
\emph{Optimal bounds on positive blow-up solutions for a semilinear wave equation},
Int. Math. Res. Not. (2001),
no.~21, 1141--1167.
\href{https://doi.org/10.1155/S107379280100054X}{doi:10.1155/S107379280100054X}.

\bibitem{BG99}
H. Bahouri and P. G\'erard,
\emph{High frequency approximation of solutions to critical nonlinear wave equations},
Amer. J. Math. \textbf{121} (1999), no.~1, 131--175.
\href{https://doi.org/10.1353/ajm.1999.0001}{doi:10.1353/ajm.1999.0001}.

\bibitem{CF86}
L. A. Caffarelli and A. Friedman,
\emph{The blow-up boundary for nonlinear wave equations},
Trans. Amer. Math. Soc. \textbf{297}
(1986), no.~1, 223--241.
\href{https://doi.org/10.1090/S0002-9947-1986-0849476-3}{doi:10.1090/S0002-9947-1986-0849476-3}.

\bibitem{CZ13}
R. C\^ote and H. Zaag,
\emph{Construction of a multisoliton blowup solution to the semilinear wave
equation in one space dimension},
Comm. Pure Appl. Math. \textbf{66}
(2013), no.~10, 1541--1581.
\href{https://doi.org/10.1002/cpa.21452}{doi:10.1002/cpa.21452}.

\bibitem{DS12}
R. Donninger and B. Sch\"orkhuber,
\emph{Stable self-similar blow up for energy subcritical wave equations},
Dyn. Partial Differ. Equ. \textbf{9}
(2012), no.~1, 63--87.
\href{https://arxiv.org/abs/1201.4337}{arXiv:1201.4337}.

\bibitem{HZ13}
M. A. Hamza and H. Zaag,
\emph{Blow-up results for semilinear wave equations in the super-conformal case},
Discrete Contin. Dyn. Syst. Ser. B \textbf{18}
(2013), no.~9, 2315--2329.
\href{https://doi.org/10.3934/dcdsb.2013.18.2315}{doi:10.3934/dcdsb.2013.18.2315}.

\bibitem{HZ25}
M. A. Hamza and H. Zaag,
\emph{A better bound on blow-up rate for the superconformal semilinear wave equation},
J. Funct. Anal. \textbf{288} (2025), no.~11, 110862.
\href{https://doi.org/10.1016/j.jfa.2025.110862}{doi:10.1016/j.jfa.2025.110862}.

\bibitem{KM08}
C. E. Kenig and F. Merle,
\emph{Global well-posedness, scattering and blow-up for the energy-critical
focusing non-linear wave equation},
Acta Math. \textbf{201} (2008), no.~2, 147--212.
\href{https://arxiv.org/abs/math/0610801}{arXiv:math/0610801}.

\bibitem{KL93a}
S. Kichenassamy and W. Littman,
\emph{Blow-up surfaces for nonlinear wave equations, I},
Comm. Partial Differential Equations \textbf{18}
(1993), nos.~3--4, 431--452.
\href{https://doi.org/10.1080/03605309308820936}{doi:10.1080/03605309308820936}.

\bibitem{KL93b}
S. Kichenassamy and W. Littman,
\emph{Blow-up surfaces for nonlinear wave equations, II},
Comm. Partial Differential Equations \textbf{18}
(1993), no.~11, 1869--1899.
\href{https://doi.org/10.1080/03605309308820997}{doi:10.1080/03605309308820997}.

\bibitem{KSV14}
R. Killip, B. Stovall and M. Vi\c{s}an,
\emph{Blowup behaviour for the nonlinear Klein--Gordon equation},
Math. Ann. \textbf{358} (2014), 289--350.
\href{https://doi.org/10.1007/s00208-013-0960-z}{doi:10.1007/s00208-013-0960-z}.

\bibitem{Levine74}
H. A. Levine,
\emph{Instability and nonexistence of global solutions to nonlinear wave
equations of the form $Pu_{tt}=-Au+\mathcal F(u)$},
Trans. Amer. Math. Soc. \textbf{192}
(1974), 1--21.
\href{https://doi.org/10.2307/1996814}{doi:10.2307/1996814}.

\bibitem{Liu26}
J. Liu,
\emph{A note on the stability of self-similar blow-up solutions for
superconformal semilinear wave equations},
J. Funct. Anal. \textbf{290} (2026), no.~8, 111364.
\href{https://doi.org/10.1016/j.jfa.2026.111364}{doi:10.1016/j.jfa.2026.111364}.

\bibitem{MZ03}
F. Merle and H. Zaag,
\emph{Determination of the blow-up rate for the semilinear wave equation},
Amer. J. Math. \textbf{125} (2003), no.~5, 1147--1164.

\bibitem{MZ05c}
F. Merle and H. Zaag,
\emph{Determination of the blow-up rate for a critical semilinear wave equation},
Math. Ann. \textbf{331} (2005), no.~2, 395--416.

\bibitem{MZ05g}
F. Merle and H. Zaag,
\emph{On growth rate near the blowup surface for semilinear wave equations},
Int. Math. Res. Not. (2005),
no.~19, 1127--1155.
\href{https://doi.org/10.1155/IMRN.2005.1127}{doi:10.1155/IMRN.2005.1127}.

\bibitem{MZ07}
F. Merle and H. Zaag,
\emph{Existence and universality of the blow-up profile for the semilinear
wave equation in one space dimension},
J. Funct. Anal. \textbf{253} (2007), no.~1, 43--121.
\href{https://doi.org/10.1016/j.jfa.2007.03.007}{doi:10.1016/j.jfa.2007.03.007}.

\bibitem{MZ08}
F. Merle and H. Zaag,
\emph{Openness of the set of non-characteristic points and regularity of the
blow-up curve for the 1 d semilinear wave equation},
Comm. Math. Phys. \textbf{282} (2008), 55--86.
\href{https://doi.org/10.1007/s00220-008-0532-3}{doi:10.1007/s00220-008-0532-3}.

\bibitem{MZ11r}
F. Merle and H. Zaag,
\emph{Blow-up behavior outside the origin for a semilinear wave equation
in the radial case},
Bull. Sci. Math. \textbf{135} (2011), no.~4, 353--373.
\href{https://doi.org/10.1016/j.bulsci.2011.03.001}{doi:10.1016/j.bulsci.2011.03.001}.

\bibitem{MZ12a}
F. Merle and H. Zaag,
\emph{Existence and classification of characteristic points at blow-up
for a semilinear wave equation in one space dimension},
Amer. J. Math. \textbf{134} (2012), no.~3, 581--648.
\href{https://arxiv.org/abs/0811.4068}{arXiv:0811.4068}.

\bibitem{MZ12b}
F. Merle and H. Zaag,
\emph{Isolatedness of characteristic points at blow-up for a
1-dimensional semilinear wave equation},
Duke Math. J. \textbf{161} (2012), no.~15, 2837--2908.
\href{https://doi.org/10.1215/00127094-1902040}{doi:10.1215/00127094-1902040}.

\bibitem{MZ15}
F. Merle and H. Zaag,
\emph{On the stability of the notion of non-characteristic point and
blow-up profile for semilinear wave equations},
Comm. Math. Phys. \textbf{333} (2015), no.~3, 1529--1562.
\href{https://doi.org/10.1007/s00220-014-2132-8}{doi:10.1007/s00220-014-2132-8}.

\bibitem{MZ16}
F. Merle and H. Zaag,
\emph{Dynamics near explicit stationary solutions in similarity variables
for solutions of a semilinear wave equation in higher dimensions},
Trans. Amer. Math. Soc. \textbf{368}
(2016), no.~1, 27--87.
\href{https://doi.org/10.1090/tran/6450}{doi:10.1090/tran/6450}.

\bibitem{MZpyramid}
F. Merle and H. Zaag,
\emph{Blowup solutions to the semilinear wave equation with a stylized
pyramid as a blowup surface},
Comm. Pure Appl. Math. \textbf{71}
(2018), no.~9, 1850--1937.
\href{https://doi.org/10.1002/cpa.21756}{doi:10.1002/cpa.21756}.

\bibitem{O24}
M. Ostermann,
\emph{Stable blowup for focusing semilinear wave equations in all dimensions},
Trans. Amer. Math. Soc. \textbf{377}
(2024), no.~7, 4727--4778.
\href{https://doi.org/10.1090/tran/9069}{doi:10.1090/tran/9069}.

\bibitem{Z26}
H. Zaag,
\emph{Soliton resolution at blow-up for the subconformal nonlinear wave equation},
in \emph{Proceedings of the International Congress of Mathematicians 2026},
vol.~5, Invited Lectures (Sections 9--11), SIAM, 2026, 368--387.
\href{https://doi.org/10.1137/25M1806892}{doi:10.1137/25M1806892}.

\end{thebibliography}
\end{document}